\documentclass[11pt]{article}

\usepackage{amsfonts,amssymb,amsmath,amsthm,epsfig,euscript,epstopdf}
\usepackage[noadjust]{cite}
\usepackage{filecontents}

\setlength{\textwidth}{5.95in}
\setlength{\textheight}{8.7in}
\setlength{\topmargin}{0pt}
\setlength{\headsep}{0pt}
\setlength{\headheight}{0pt}
\setlength{\oddsidemargin}{0pt}
\setlength{\evensidemargin}{0pt}

\newtheorem{theorem}{Theorem}
\newtheorem{lemma}[theorem]{Lemma}
\newtheorem{corollary}[theorem]{Corollary}

\long\def\symbolfootnote[#1]#2{\begingroup
\def\thefootnote{\fnsymbol{footnote}}\footnote[#1]{#2}\endgroup}






\newcommand{\la}{\lambda}
\newcommand{\La}{\Lambda}

\newcommand{\des}{\mathrm{des}}

\newcommand{\LRmin}[1]{\mathrm{LRmin}(#1)}

\newcommand{\nth}[1][n]{{#1}^{\mathrm{th}}}

\newcommand{\sg}{\sigma}

\newcommand{\cref}[1]{Corollary \ref{corollary:#1}}

\newcommand{\Floor}[1][n/2]{\left \lfloor #1 \right \rfloor}

\newcommand{\red}{\mathrm{red}}

\newcommand{\sgn}[1]{\mathrm{sgn}(#1)}

\newcommand{\Gmch}{\text{$\Gamma$-$\mathrm{mch}$}}

\newcommand{\fig}[2]{\begin{figure}[ht]
\centerline{\scalebox{.66}{\epsfig{file=#1.eps}}}
\caption{#2}
\label{fig:#1}
\end{figure}}

\setlength{\unitlength}{1in}

\vskip.25in

\title{Generating functions for descents over permutations 
which avoid sets of consecutive patterns.}

\author{
Quang T. Bach \\
\small Department of Mathematics\\[-0.8ex]
\small University of California, San Diego\\[-0.8ex]
\small La Jolla, CA 92093-0112. USA\\[-0.8ex]
\small \texttt{qtbach@ucsd.edu}
\and
\and
Jeffrey B. Remmel \\
\small Department of Mathematics\\[-0.8ex]
\small University of California, San Diego\\[-0.8ex]
\small La Jolla, CA 92093-0112. USA\\[-0.8ex]
\small \texttt{remmel@math.ucsd.edu}
\and
}

\date{\small Submitted: Date 1;  Accepted: Date 2;
 Published: Date 3.\\
\small MR Subject Classifications: 05A15, 05E05 \\
keywords: pattern avoidance, consecutive pattern, permutation, pattern match, descent, left to right minimum, symmetric polynomial, exponential generating function}

\begin{document}

\maketitle

\begin{abstract}
We extend the reciprocity method of Jones and Remmel \cite{JR2,JR3}
to study generating functions of the form   
$\sum_{n \geq 0} \frac{t^n}{n!} 
\sum_{\sg \in \mathcal{NM}_n(\Gamma)}x^{\LRmin{\sg}}y^{1+\des(\sg)}$ where 
$\Gamma$ is a set of permutations which start with 1 and have 
at most one descent, 
$\mathcal{NM}_n(\Gamma)$  
is the set of permutations $\sg$ in the symmetric group $\mathfrak{S}_n$ 
which have no $\Gamma$-matches, $\des(\sg)$ is the number 
of descents of $\sg$ and $\LRmin{\sg}$ is the number of left-to-right 
minima of $\sg$. We show that this generating function  
is of the form $\left( \frac{1}{U_{\Gamma}(t,y)}\right)^x$ where 
$U_{\Gamma}(t,y) = \sum_{n\geq 0}U_{\Gamma,n}(y) \frac{t^n}{n!}$ 
and the coefficients 
$U_{\Gamma,n}(y)$ satisfy some simple recursions in 
the case where $\Gamma$ equals $\{1324,123\}$, 
$\{1324 \cdots p,12 \cdots (p-1)\}$ and $p \geq 5$, or $\Gamma$ is the 
set of permutations $\sg = \sg_1 \cdots \sg_n$ 
of length $n=k_1+k_2$ where $k_1,k_2 \geq 2$, $\sg_1 =1$, $\sg_{k_1+1}=2$, and 
$\des(\sg) =1$. 
\end{abstract}

\section{Introduction}

Let $\mathfrak{S}_n$ denote the symmetric group of all permutations 
of $\{1, \ldots, n\}$.  If 
$\sg = \sg_1 \cdots \sg_n \in \mathfrak{S}_n$, we say that $i$ is a descent of 
$\sg$ if $\sg_i > \sg_{i+1}$ and $\sg_j$ is a left-to-right minimum of 
$\sg$ if  $\sg_j < \sg_i$ for all $i < j$.  We let 
$\des(\sg)$ be the number of descents of $\sg$ and $\LRmin{\sg}$ be the number of left-to-right minima of $\sg$. Given a sequence $\alpha= \alpha_1 \cdots \alpha_n$ of distinct integers, the \emph{reduction} of $\alpha$, $\red(\alpha)$, 
is the permutation in $\mathfrak{S}_n$ found by replacing the
\begin{math}i^{\textrm{th}}\end{math} smallest integer that appears in \begin{math}\alpha\end{math} by \begin{math}i\end{math}.   For
example, if \begin{math}\alpha = 9~2~7~4~5\end{math}, then 
$\red(\alpha) = 51423$. Let $\Gamma$ be a set of permutations. We say 
that a permutation $\sg = \sg_1 \cdots \sg_n \in \mathfrak{S}_n$ 
has a $\Gamma$-match starting at 
position $i$ if there is a $j \geq 1$ such that 
$\red(\sg_i \sg_{i+1} \cdots \sg_{i+j}) \in \Gamma$. We let 
$\Gmch(\sg)$ denote the number of $\Gamma$-matches in $\sg$. 
We let $\mathcal{NM}_n(\Gamma)$  
be the set of permutations $\sg$ in the symmetric group $\mathfrak{S}_n$ such 
that $\Gmch(\sg) =0$.

The main goal of this paper is 
to study the generating function 
$$\mbox{NM}_{\Gamma}(t,x,y)=\sum_{n \geq 0} \frac{t^n}{n!} 
\sum_{\sg \in \mathcal{NM}_n(\Gamma)}x^{\LRmin{\sg}}y^{1+\des(\sg)}$$ 
in the case where $\Gamma$ is a set of permutations such that 
for each $\alpha \in \Gamma$, $\alpha$ starts with 1 and 
$des(\alpha) \leq 1$.  In the special case where 
$\Gamma$ consists of a single permutation 
$\tau$, we will denote $\mbox{NM}_{\Gamma}(t,x,y)$ simply as 
$\mbox{NM}{\tau}(t,x,y)$. Jones and Remmel \cite{JR1}
showed that if every permutation in $\Gamma$ starts with 1, then  
we can write $\mbox{NM}_{\Gamma}(t,x,y)$ in the form 
$\left( \frac{1}{U_{\Gamma}(t,y)}\right)^x$ where 
$$U_{\Gamma}(t,y) = \sum_{n\geq 0}U_{\Gamma,n}(y) \frac{t^n}{n!}.$$

There is a considerable literature on the 
generating function $\mbox{NM}_{\Gamma}(t,1,1)$ of 
permutations that consecutively avoid a pattern 
or set of patterns. See for example,\cite{AAM,B1,B2,DK,DR,EN,EN2,EKP,GJ,Kit1,Kitbook,MenRem}. For the most part, these papers do not  
consider generating functions of the form 
$\mbox{NM}{\tau}(t,1,y)$ or $\mbox{NM}{\tau}(t,x,y)$. An exception 
is the work on enumeration schemes of Baxter \cite{B1,B2}, who gave 
general methods to enumerate patterns avoiding vincular patterns according 
to various permutations statistics. Our approach is 
to use the reciprocity method of Jones and Remmel.

Jones and Remmel \cite{JR,JR2,JR3} 
developed what they called the reciprocity method 
to compute the generating function 
$\mbox{NM}{\tau}(t,x,y)$ for certain families of 
permutations $\tau$ such that $\tau$ starts with 
1 and $\des(\tau) =1$. The basic idea of their approach 
is as follows. First one writes 
\begin{equation}\label{eq:I3}
U_{\tau}(t,y) = 
\frac{1}{1+\sum_{n \geq 1} \mbox{NM}_{\tau,n}(1,y) \frac{t^n}{n!}}.
\end{equation} One can then use the homomorphism method 
to give a combinatorial   
interpretation of the right-hand side of (\ref{eq:I3}) which can 
be used to find simple recursions for  
the coefficients $U_{\tau,n}(y)$. 
The homomorphism method derives generating functions for 
various permutation statistics by 
applying a ring homomorphism defined on the 
ring of symmetric functions \begin{math}\Lambda\end{math}  
in infinitely many variables \begin{math}x_1,x_2, \ldots \end{math} 
to simple symmetric function identities such as 
\begin{equation*}
H(t) = 1/E(-t),
\end{equation*}
where $H(t)$ and $E(t)$ are the generating functions for the homogeneous and elementary 
symmetric functions, respectively:
\begin{equation}\label{genfns}
H(t) = \sum_{n\geq 0} h_n t^n = \prod_{i\geq 1} \frac{1}{1-x_it},~~~~ E(t) = \sum_{n\geq 0} e_n t^n = \prod_{i\geq 1} 1+x_it.
\end{equation}
In their case, Jones and Remmel
 defined a homomorphism \begin{math}\theta\end{math} on 
\begin{math}\Lambda\end{math} by setting 
\begin{displaymath}\theta(e_n) = \frac{(-1)^n}{n!} \mbox{NM}_{\tau,n}(1,y).\end{displaymath}
Then 
\begin{displaymath}\theta(E(-t)) = {\sum_{n\geq 0} \mbox{NM}_{\tau,n}(1,y) \frac{t^n}{n!}} = \frac{1}{U_\tau(t,y)}.\end{displaymath}
Hence 
$$U_\tau(t,y) = \frac{1}{\theta(E(-t))} = \theta(H(t)),$$
which implies that 
\begin{equation*}
n!\theta(h_n) = U_{\tau,n}(y).
\end{equation*}
Thus if we can compute $n!\theta(h_n)$ for all $n \geq 1$, then we can 
compute the polynomials $U_{\tau,n}(y)$ and the generating function 
$U_{\tau}(t,y)$, which in turn allows us to compute 
the generating function $\mbox{NM}_{\tau}(t,x,y)$. 
Jones and Remmel \cite{JR2,JR3} showed that one can interpret 
$n!\theta(h_n)$ as a certain signed sum of weights of filled labeled brick tabloids when $\tau$ starts with 1 
and $\des(\tau)=1$. Then they showed how such a  
combinatorial interpretation allowed them to prove  
that for certain families of such permutations $\tau$, 
the $U_{\tau,n}(y)$'s satisfied certain simple recursions.

The main purpose of this paper is to extend 
the methods of Jones and Remmel \cite{JR2,JR3} so 
that one can compute $U_{\Gamma,n}(y)$.  In 
our case we assume that if $\tau \in \Gamma$, then 
$\tau$ starts with 1 and $\des(\tau) \leq 1$.  One 
of the most interesting cases from our point of 
view is the case when $\Gamma$ contains an identity  permutation 
$12 \cdots (k+1)$ where $k \geq 2$. In such a case, 
the underlying set of weighted 
filled labeled brick tabloids which we use to interpret 
$U_{\Gamma,n}(y)$ has the property that all 
the bricks have size less than or equal to $k$. This results 
in a significant difference between the recursions 
satisfied by $U_{\tau,n}(y)$ and the 
recursions satisfied by $U_{\{\tau,12 \cdots (k+1)\},n}(y)$.

For example, in \cite{JR2}, Jones and Remmel  studied  the generating 
functions $\mbox{NM}_{\tau}(t,x,y)$ for permutations $\tau$ of the form 
$\tau = 1324\cdots p$ where $p \geq 4$. That is, $\tau$ arises 
from the identity permutation
by transposing 2 and 3. Using the reciprocity method, 
they proved that $U_{1324,1}(y)=-y$ and 
for $n \geq 2$, 
\begin{equation*}
U_{1324,n}(y) = (1-y)U_{1324,n-1}(y)+ \sum_{k=2}^{\lfloor n/2 \rfloor} 
(-y)^{k-1} C_{k-1} U_{1324,n-2k+1}(y)
\end{equation*}
where \begin{math}C_k = \frac{1}{k+1}\binom{2k}{k}\end{math} is the \begin{math}k^{th}\end{math} 
Catalan number.  They also proved that for  any $p \geq 5$, 
$U_{1324 \cdots p,n}(y) =-y$ and for $n \geq 2$, 
\begin{equation*}
U_{1324\cdots p,n}(y)=(1-y)U_{1324\cdots p,n-1}(y)+\sum_{k=2}^{\lfloor\frac{n-2}{p-2}\rfloor+1}(-y)^{k-1}U_{1324\cdots p,n-((k-1)(p-2)+1)}(y).
\end{equation*}

We will 
prove the following two theorems.  
\begin {theorem} \label{thm:1324123}
Let $\Gamma = \{1324,123\}$. Then 
$$\mbox{NM}_\Gamma(t,x,y)=\left(\frac1{U_\Gamma(t,y)}\right)^x \text{ where }U_\Gamma(t,y)=1+\sum_{n\geq1}U_{\Gamma,n}(y)\frac{t^n}{n!},$$
$U_{\Gamma,1}(y)=-y$, and for $n \geq 2,$
\begin{align*}
\displaystyle  U_{\Gamma,n}(y) &= -yU_{\Gamma,n-1}(y) -yU_{\Gamma,n-2}(y)
+ \sum_{k=2}^{\Floor }(-y)^{k}C_{k-1}U_{\Gamma, n-2k}(y).
\end{align*} 
\end{theorem}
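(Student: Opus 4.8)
The plan is to follow the reciprocity method exactly as described in the excerpt, but now with $\Gamma = \{1324, 123\}$. First I would set up the combinatorial interpretation: by the result of Jones and Remmel \cite{JR1}, since both $1324$ and $123$ start with $1$, we have $\mbox{NM}_\Gamma(t,x,y) = (1/U_\Gamma(t,y))^x$ with $U_\Gamma(t,y) = 1/\theta(E(-t)) = \theta(H(t))$ where $\theta(e_n) = \frac{(-1)^n}{n!}\mbox{NM}_{\Gamma,n}(1,y)$, so that $U_{\Gamma,n}(y) = n!\,\theta(h_n)$. The next step is to expand $n!\,\theta(h_n)$ as a signed sum over filled labeled brick tabloids of shape $(n)$: writing $h_n = \sum_{\mu \vdash n} \pm (\text{brick tabloid terms}) e_\mu$ via $H(t) = 1/E(-t)$, applying $\theta$, and bookkeeping the $y$-weights coming from the descent statistic on the $\mathcal{NM}$-permutations attached to each brick. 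The crucial new feature, as the introduction flags, is that including the identity pattern $123$ forces every brick to have size at most $2$: a permutation with no $123$-match and no $1324$-match that "fills" a brick can have length at most $2$, because any length-$3$ increasing-then-whatever configuration inside a single brick would create a $123$- or $1324$-match. This is exactly why the recursion for $U_{\Gamma,n}$ differs structurally from the pure $U_{1324,n}$ recursion.

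With that structural fact in hand, I would derive the recursion by conditioning on the first brick of the tabloid. The first brick has size $1$ or size $\ge 2$ coming from the $1324$-avoidance structure; the size-$1$ contribution gives the $-y\,U_{\Gamma,n-1}(y)$ term, and a size-$2$ brick should give the $-y\,U_{\Gamma,n-2}(y)$ term. The Catalan-number terms $\sum_{k=2}^{\lfloor n/2\rfloor}(-y)^k C_{k-1} U_{\Gamma,n-2k}(y)$ come from the way $1324$-matches can link consecutive bricks: in the original $\tau = 1324$ analysis of \cite{JR2} the sum $\sum_{k\ge 2}(-y)^{k-1}C_{k-1}U_{1324,n-2k+1}$ appears, and the Catalan number counts the non-crossing ways that a block of $2k-1$ (or here $2k$) positions can be organized into bricks consistent with forbidden $1324$-matches. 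I would track how the extra constraint from $123$ shifts the index (from $n-2k+1$ to $n-2k$) and changes the power of $-y$ (from $k-1$ to $k$): the identity pattern essentially eats one more position and contributes one more forced descent in the relevant configuration, which is consistent with the stated formula.

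Concretely, the key steps in order are: (1) invoke \cite{JR1} to get the $(1/U_\Gamma)^x$ form and reduce to computing $U_{\Gamma,n}(y) = n!\,\theta(h_n)$; (2) prove the brick-size bound — every brick in a surviving filled labeled brick tabloid has size $\le 2$ — using the simultaneous avoidance of $123$ and $1324$; (3) classify the legal fillings of a size-$2$ brick and the legal ways two or more consecutive bricks can interact without creating a $\Gamma$-match, extracting the $y$-weights; (4) set up the first-brick recursion and identify the three kinds of contributions ($-y U_{\Gamma,n-1}$ from a size-$1$ brick, $-y U_{\Gamma,n-2}$ from an isolated size-$2$ brick, and the Catalan sum from a maximal chain of $1324$-linked size-$2$ bricks), verifying the base case $U_{\Gamma,1}(y) = -y$ directly from $\mbox{NM}_{\Gamma,1}(1,y) = y$. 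The main obstacle I anticipate is step (3)–(4): correctly enumerating the ways that several size-$2$ bricks can be chained together by $1324$-matches while avoiding any $123$-match, and checking that this count is exactly $C_{k-1}$ with the claimed power of $-y$ and index shift. This is where a careful sign-reversing involution (to cancel all but the non-crossing configurations, as in the original reciprocity-method papers) will be needed, and matching the combinatorics of that cancellation against the Catalan recursion is the delicate part; the rest is routine symmetric-function bookkeeping.
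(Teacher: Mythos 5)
Your plan follows the paper's proof essentially step for step: reduce to $U_{\Gamma,n}(y)=n!\,\Theta_{\Gamma}(h_n)$, characterize the fixed points of the involution (all bricks of size at most $2$ because $123\in\Gamma$ and the entries within a brick of a fixed point increase), and then condition on whether $2$ lies in the first brick, starts a new brick after a size-$1$ first brick, or begins a maximal chain of size-$2$ bricks linked by $1324$-matches, which yields exactly the three terms $-yU_{\Gamma,n-2}$, $-yU_{\Gamma,n-1}$, and the Catalan sum. The one place your anticipated treatment diverges is the count $C_{k-1}$: the paper needs no new sign-reversing cancellation there, but first argues that such a chain must be filled with exactly $\{1,\dots,2k\}$ (so its removal gives a fixed point of size $n-2k$) and then cites the bijection of Jones and Remmel between permutations of $\mathfrak{S}_{2k}$ having $1324$-matches at positions $1,3,\dots,2k-3$ and Dyck paths of length $2k-2$.
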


\begin {theorem} \label{thm:1324p}
Let $\Gamma = \{1324\dots p,123\dots p-1\}$ where $p \geq 5$.  Then 
$$\mbox{NM}_\Gamma(t,x,y)=\left(\frac1{U_\Gamma(t,y)}\right)^x \text{ where }U_\Gamma(t,y)=1+\sum_{n\geq1}U_{\Gamma,n}(y)\frac{t^n}{n!},$$
$U_{\Gamma,1}(y)=-y$, and for $n \geq 2,$
\begin{align*}
\displaystyle  U_{\Gamma,n}(y) &= \sum_{k=1}^{p-2}(-y)U_{\Gamma,n-k}(y) + \sum_{k=1}^{p-2}\sum_{m=2}^{\lfloor\frac{n-k}{p-2}\rfloor }(-y)^{m}U_{\Gamma, n-k-(m-1)(p-2)}(y). 
\end{align*}
\end{theorem}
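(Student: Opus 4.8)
The plan is to follow the reciprocity/homomorphism method exactly as Jones and Remmel set it up, but with the homomorphism $\theta$ now associated to $\Gamma=\{1324\cdots p,\,123\cdots(p-1)\}$ via $\theta(e_n)=\frac{(-1)^n}{n!}\mathrm{NM}_{\Gamma,n}(1,y)$, so that $U_{\Gamma,n}(y)=n!\,\theta(h_n)$. First I would recall (from the $\des(\tau)=1$ theory, applied to the pattern $1324\cdots p$, together with the presence of $12\cdots(p-1)$ in $\Gamma$) that $n!\,\theta(h_n)$ is a signed sum over filled labeled brick tabloids of shape $(n)$: one expands $h_n=\sum_{\lambda\vdash n}(-1)^{n-\ell(\lambda)}(\text{brick tabloid term})$ in the $e_\lambda$'s, applies $\theta$, and interprets each $\theta(e_{b})=\frac{(-1)^b}{b!}\mathrm{NM}_{\Gamma,b}(1,y)$ combinatorially: fill the cells of a brick of size $b$ with a permutation of $b$ labels having no $\Gamma$-match, weight by $y^{1+\des}$ inside each brick, multiply by a sign $(-1)^{b-1}$ coming from the brick, and finally put a label in $\{1,\dots,n\}$ on the last cell of each brick whose value must be less than the value in the first cell of the next brick. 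Because $12\cdots(p-1)\in\Gamma$, a permutation of length $b$ with no $\Gamma$-match in particular has no increasing run of length $p-1$; combined with "no $1324\cdots p$-match" this forces every brick to have size at most $p-2$ (any brick of size $\geq p-1$ would contain a forbidden consecutive pattern), which is exactly the structural feature the introduction flags.

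Next I would run the standard sign-reversing involution on the set of filled labeled brick tabloids, designed to cancel all objects except a small "fixed-point" family. The involution looks at each brick from left to right; within a brick one tries to either break it at the first descent (splitting into two bricks) or merge a brick with the next one when the inter-brick label condition is compatible, the sign $(-1)^{b-1}$ ensuring these two operations cancel. After this cancellation, the surviving configurations are those where each brick is a strictly increasing sequence (hence of size between $1$ and $p-2$) carrying a single sign factor, with the inter-brick labels recording the one place where the global permutation can descend; translating "increasing brick of size $j$" into a choice counted by the recursion and summing over the size $k\in\{1,\dots,p-2\}$ of the \emph{first} brick, plus the extra combinatorial factor $(-y)^m$ coming from a maximal run of $m-1$ further forced bricks (this is where the double sum $\sum_{k=1}^{p-2}\sum_{m\geq 2}$ and the index shift $n-k-(m-1)(p-2)$ appear), yields precisely
\[
U_{\Gamma,n}(y)=\sum_{k=1}^{p-2}(-y)U_{\Gamma,n-k}(y)+\sum_{k=1}^{p-2}\sum_{m=2}^{\lfloor\frac{n-k}{p-2}\rfloor}(-y)^{m}U_{\Gamma,n-k-(m-1)(p-2)}(y).
\]
The base case $U_{\Gamma,1}(y)=-y$ is immediate since the only length-one brick tabloid contributes $\theta(e_1)=-\mathrm{NM}_{\Gamma,1}(1,y)=-y$. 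The form $\mathrm{NM}_\Gamma(t,x,y)=(1/U_\Gamma(t,y))^x$ and $U_\Gamma(t,y)=1/\theta(E(-t))=\theta(H(t))$ are quoted directly from the Jones–Remmel result stated in the introduction, since every element of $\Gamma$ starts with $1$.

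The main obstacle, and the place where genuine care is needed, is verifying that the sign-reversing involution is well defined and genuinely fixed-point-free away from the claimed surviving family once the bricks are capped at size $p-2$: the interaction between the "no $1324\cdots p$-match across a brick boundary" condition and the inter-brick label inequality has to be handled so that merging/splitting never creates or destroys a $\Gamma$-match, and one must check that a $\Gamma$-match spanning the boundary of two increasing bricks cannot occur in a surviving configuration — this is exactly why the pattern $1324\cdots p$ (as opposed to an arbitrary one-descent pattern) and the length $p\geq 5$ matter, and it is the step most analogous to, but more delicate than, the $\{1324,123\}$ case of Theorem~\ref{thm:1324123}. I expect the bookkeeping of which brick sizes and which label configurations survive — producing the precise exponents $m$ and shifts $(m-1)(p-2)+k$ — to be routine once the involution is pinned down, so the proof will spend most of its effort on that combinatorial lemma about the fixed points.
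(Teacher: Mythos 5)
Your general framework is the same as the paper's: the homomorphism $\Theta_\Gamma(e_n)=\frac{(-1)^n}{n!}\mbox{NM}_{\Gamma,n}(1,y)$, the brick-tabloid expansion of $h_n$, the sign-reversing involution, and the observation that the presence of $12\cdots(p-1)$ in $\Gamma$ caps all brick sizes at $p-2$. All of that is carried out once and for all in Lemma \ref{lem:keyGamma}, and you are right that the work specific to this theorem is the analysis of the fixed points. But your description of those fixed points contains an error that blocks the rest of the argument: you say one must check that ``a $\Gamma$-match spanning the boundary of two increasing bricks cannot occur in a surviving configuration.'' The opposite is true. In a fixed point, whenever there is a decrease between consecutive bricks there \emph{must} be a $1324\cdots p$-match lying in the cells of those two bricks (condition 3(b) of Lemma \ref{lem:keyGamma}); these spanning matches are precisely what generate the entire second double sum in the recursion. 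If no spanning match could occur, every inter-brick transition would be an increase, the whole permutation would be the identity, and the recursion would collapse to something much weaker than the stated one.

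Relatedly, you never justify why each term of the double sum appears with coefficient exactly $1$ rather than with some multiplicity. The paper's proof shows that if the first brick has size $k$ and $k$ sits in the first cell of $b_2$, then $b_2,\ldots,b_{m-1}$ are forced to have size exactly $p-2$ with $1324\cdots p$-matches starting at the cells $c_i=k+(i-1)(p-2)-1$, and --- this is where $p\geq 5$ is actually used --- the entries $\sigma_1,\ldots,\sigma_{k+(m-1)(p-2)}$ are then \emph{completely determined} ($\sigma_i=i$ for $i\leq k-1$, $\sigma_k=k+1$, $\sigma_{k+1}=k$, $\sigma_{k+2}=k+2$, and so on), so stripping off this prefix is a weight-preserving bijection onto fixed points of size $n-k-(m-1)(p-2)$, contributing $(-y)^m U_{\Gamma,n-k-(m-1)(p-2)}(y)$ with no extra factor. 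For $p=4$ the prefix is not forced and one instead picks up Catalan numbers, as in Theorem \ref{thm:1324123}; your sketch instead attributes the role of $p\geq 5$ to the well-definedness of the involution, which in fact only requires each pattern in $\Gamma$ to have at most one descent. A complete argument also needs the preliminary facts that the first brick of size $k$ must contain $1,\ldots,k-1$ and that $k$ lies either in cell $k$ or in cell $k+1$ (giving the first sum versus the second); without these, the spanning-match structure, and the uniqueness of the forced prefix, the stated recursion cannot be derived from your outline.
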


Note that both Theorems \ref{thm:1324123} and  \ref{thm:1324p} show 
that the reciprocity method applies even in cases where 
$\Gamma$ is a family that contains permutations of 
different lengths.  In the case of Theorem 1, the polynomials 
$U_{\{1324,123\},n}(-y)$ are the polynomials in 
the sequences  A039598 and A039599 in On-line Encyclopedia of 
Integer Sequences \cite{oeis} up to a power of $y$.  The 
polynomials in sequences A039598 and A039599 are related 
to the expansions of the powers of $x$ in terms of 
the Chebyshev polynomials of the second kind. 
We will give a bijection between our combinatorial 
interpretation of $U_{\{1324,123\},2n}(-y)$ and one of 
the known combinatorial interpretations for A039599, and a bijection 
between our combinatorial 
interpretation of $U_{\{1324,123\},2n+1}(-y)$ and one of 
the known combinatorial interpretations for A039598. This will allow us 
to give closed expressions for the polynomials $U_{\{1324,123\},n}(y)$. 
That is, we will prove that for 
all $n \geq 0$, 
\begin{eqnarray*}
U_{\{1324,123\},2n}(y) &=& 
\sum_{k=0}^n \frac{(2k+1)\binom{2n}{n-k}}{n+k+1}(-y)^{n+k+1} \ \mbox{and} \\
U_{\{1324,123\},2n+1}(y) &=& 
\sum_{k=0}^n \frac{2(k+1)\binom{2n+1}{n-k}}{n+k+2}(-y)^{n+k}.
\end{eqnarray*}

Another 
example is the following.  
Let $k_1,k_2 \geq 2$ and $p = k_1+k_2$. 
We consider the family of permutations $\Gamma_{k_1,k_2}$ in $\mathfrak{S}_p$ defined as 
$$\Gamma_{k_1,k_2} = \{\sigma \in \mathfrak{S}_p: \sigma_1=1, \sigma_{k_1+1}=2, \sigma_1 < \sigma_2< \cdots<\sigma_{k_1}~ \&~\sigma_{k_1+1} < \sigma_{k_1+2}< \cdots<\sigma_{p} \}.$$
That is, $\Gamma_{k_1,k_2}$ consists of all permutations $\sg$ of 
length $p$ where 1 is in position 1, 2 is in position $k_1+1$, 
and $\sg$ consists of two increasing sequences, one starting at 
1 and the other starting at 2. Then we shall prove the following 
theorem.  

\begin {theorem} \label{thm:1-2-gen}
Let  $\Gamma = \Gamma_{k_1,k_2}$ where $k_1,k_2 \geq 2$,
$m = \min\{k_1, k_2\}$, and $M = \max\{k_1,k_2\}$.  
Then 
$$\mbox{NM}_{\Gamma}(t,x,y)=\left(\frac1{U_\Gamma(t,y)}\right)^x \text{ where }U_\Gamma(t,y)=1+\sum_{n\geq1}U_{\Gamma,n}(y)\frac{t^n}{n!},$$
$U_{\Gamma,1}(y)=-y$, and for $n \geq 2,$
\begin{align*}
\displaystyle   U_{\Gamma,n}(y) &= (1-y)U_{\Gamma,n-1}(y) -y\binom{n-2}{k_1-1}\left( U_{\Gamma,n-M}(y) +y\sum_{i=1}^{m-1}U_{\Gamma,n-M-i}(y) \right). 
\end{align*}
\end{theorem}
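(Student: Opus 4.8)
The plan is to run the reciprocity method of Jones and Remmel, as in the rest of the paper, for the family $\Gamma=\Gamma_{k_1,k_2}$. Every $\tau\in\Gamma_{k_1,k_2}$ begins with $1$ and has a single descent, at position $k_1$, so the shape $\mbox{NM}_{\Gamma}(t,x,y)=\left(1/U_\Gamma(t,y)\right)^x$ is immediate from the Jones--Remmel result recalled in the introduction, and the whole task is to determine $U_{\Gamma,n}(y)$. I would introduce the ring homomorphism $\theta$ on $\Lambda$ with $\theta(e_0)=1$ and $\theta(e_n)=\frac{(-1)^n}{n!}\mbox{NM}_{\Gamma,n}(1,y)$; then, exactly as in the introduction, $U_{\Gamma,n}(y)=n!\,\theta(h_n)$, so everything reduces to computing $n!\,\theta(h_n)$.

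Expanding $h_n$ in the elementary basis via brick tabloids, $h_n=\sum_{(c_1,\dots,c_\ell)\models n}(-1)^{n-\ell}e_{c_1}\cdots e_{c_\ell}$, and using $\mbox{NM}_{\Gamma,j}(1,y)=\sum_{\sigma\in\mathcal{NM}_j(\Gamma)}y^{1+\des(\sigma)}$, one obtains the usual combinatorial model: $n!\,\theta(h_n)$ is a signed sum over \emph{filled brick tabloids}, i.e.\ tilings of a row of $n$ cells by bricks $b_1,\dots,b_k$ together with a bijective filling of the cells by $1,\dots,n$ in which the reduction of the entries inside every brick has no $\Gamma$-match, each object carrying the weight $(-1)^k y^{\,k+d}$, where $d$ is the number of descents internal to a brick. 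The feature of $\Gamma_{k_1,k_2}$ that drives the computation is the following local description of its matches: a permutation $\rho=\rho_1\cdots\rho_\ell$ has a $\Gamma_{k_1,k_2}$-match whose descent is in position $i$ precisely when $k_1\le i\le\ell-k_2$, when $\rho$ is increasing on $[i-k_1+1,i]$ and on $[i+1,i+k_2]$, and when $\rho_{i-k_1+1}<\rho_{i+1}<\rho_{i-k_1+2}$. In particular every permutation of length at most $p-1=k_1+k_2-1$ avoids $\Gamma$, and whether a longer brick avoids $\Gamma$ is decided, one descent at a time, by that single inequality.

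The heart of the proof is a sign-reversing, weight-preserving involution on the set of filled brick tabloids together with an explicit description of its fixed points. Following \cite{JR2,JR3}, the involution scans the bricks from the left and performs the first legal local move -- either splitting a brick at a suitably chosen descent into two bricks, each of which still avoids $\Gamma$, or merging two adjacent bricks into a single brick that still avoids $\Gamma$ -- the move being forced so that $(-1)^k$ changes sign while the power of $y$ is unchanged; the fixed points are the configurations admitting no legal move. Using the local description of $\Gamma_{k_1,k_2}$-matches, I would show that in a fixed point every brick has either size $1$ or a size $s$ with $M\le s\le p-1$, that a brick of the latter kind has a rigid internal shape (increasing when $s=M$, carrying exactly one internal descent when $s=M+i$ for $1\le i\le m-1$), so that it contributes weight $-y$ when $s=M$ and $-y^2$ when $M<s\le p-1$, and that, once two of its entries are pinned down, such a \emph{special} brick admits exactly $\binom{n-2}{k_1-1}$ fillings.

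Granting this, the recursion is obtained by accounting for the weighted fixed points according to the shape at the right-hand end of the configuration: a singleton tail contributes $-y\,U_{\Gamma,n-1}(y)$, a configuration ending in a special brick of size $M$ contributes $-y\binom{n-2}{k_1-1}U_{\Gamma,n-M}(y)$, one ending in a special brick of size $M+i$ contributes $-y^2\binom{n-2}{k_1-1}U_{\Gamma,n-M-i}(y)$, and the remaining fixed points supply the leftover $U_{\Gamma,n-1}(y)$ needed to complete the coefficient $1-y$; summing these, together with $U_{\Gamma,1}(y)=-y$ (the lone filled brick tabloid on one cell being a single singleton of weight $-y$), yields the stated recursion. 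I expect the genuinely hard step to be the construction of the involution and the verification that its fixed points are exactly these configurations: one must rule out bricks of size $\ge p$ and of size in $\{2,\dots,M-1\}$, and one must determine the internal shape of the special bricks precisely enough to recover the exact coefficient $\binom{n-2}{k_1-1}$ and the split between size $M$ and sizes $M+1,\dots,p-1$. Because the patterns in $\Gamma_{k_1,k_2}$ have two increasing runs of (in general) different lengths, deciding exactly when a brick may legally be split, and hence which configurations survive, is markedly more delicate than in the single-pattern families treated by Jones and Remmel.
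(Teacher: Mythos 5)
Your setup (the homomorphism $\Theta_\Gamma$, the brick-tabloid expansion of $h_n$, and a split/merge involution in the style of Jones--Remmel) is the same as the paper's, and the final bookkeeping does land on the right formula. But the structure theorem you propose for the fixed points is not correct, and the recursion cannot be extracted the way you describe. First, in the involution actually used (split at the \emph{first} cell labeled $y$, i.e.\ at the first descent internal to a brick, or else merge the first mergeable pair of adjacent bricks), a fixed point can have \emph{no} cell labeled $y$ at all: every brick is internally increasing and contributes exactly one factor $-y$. Your ``special bricks'' of size $M+i$ carrying one internal descent and weight $-y^2$ cannot be fixed points of this involution; the configurations you have in mind appear in the genuine fixed points as a brick of size $k_1$ followed by a brick of size at least $k_2$ with a descent between them covered by a $\Gamma$-match. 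Second, since $\Gamma_{k_1,k_2}$ contains no identity pattern, bricks in fixed points may have arbitrary size, not just size $1$ or size in $[M,p-1]$: for example the single increasing brick on all of $\{1,\dots,n\}$ is a fixed point for every $n$, and it is what produces the coefficient $1$ of $y$ in every row of Table \ref{tab:U1-2-}. So the classification ``size $1$ or $M\le s\le p-1$'' is false, and any argument built on it fails.

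Third, reading the configuration from the right-hand end is incompatible with where the coefficient $\binom{n-2}{k_1-1}$ comes from. The constraints that anchor the argument --- $1$ sits in cell $1$, the minimal elements of the bricks increase from left to right, and hence $2$ sits in cell $2$ or in cell $k_1+1$ --- all live at the \emph{left} end and are not reverse-symmetric. The paper's proof conditions on the position of $2$: if $2$ is in cell $2$ one strips cell $1$ and gets the contribution $(1-y)U_{\Gamma,n-1}(y)$; if $2$ is in cell $k_1+1$ then the first brick has exactly $k_1$ cells containing $1$ and $k_1-1$ elements chosen from $\{3,\dots,n\}$ (this is the $\binom{n-2}{k_1-1}$), and stripping that brick leaves a fixed point of size $n-k_1$ whose first brick has size at least $k_2$. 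The remaining term $y\sum_{i=1}^{m-1}U_{\Gamma,n-M-i}(y)$ then comes from a separate counting step: a fixed point of size $n-M$ whose first brick has size $j<m$ must begin with $1,2,\dots,j$ in that brick, so the number of fixed points with first brick of size at least $m$ is $U_{\Gamma,n-M}(y)+y\sum_{j=1}^{m-1}U_{\Gamma,n-M-j}(y)$ (with a further reduction from $n-k_1$, first brick $\ge k_2$, to this symmetric form, treated in two cases according to whether $k_1\ge k_2$). None of this inclusion--exclusion on the size of the \emph{second} brick appears in your outline, and your ``leftover $U_{\Gamma,n-1}(y)$ from the remaining fixed points'' has no justification. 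To repair the proof you should first establish the correct fixed-point description (Lemma \ref{lem:keyGamma}) and then run the left-to-right analysis on the position of $2$.
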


When $k_1 = k_2 = 2,$ Theorem \ref{thm:1-2-gen} gives us the 
following corollary. 

\begin{corollary} \label{cor:13241423}
For $\Gamma = \{1324, 1423\},$ then 
$$\mbox{NM}_\Gamma(t,x,y)=\left(\frac1{U_\Gamma(t,y)}\right)^x \text{ where }U_\Gamma(t,y)=1+\sum_{n\geq1}U_{\Gamma,n}(y)\frac{t^n}{n!},$$
$U_{\Gamma,1}(y)=-y$, and for $n \geq 2,$
$$\displaystyle  U_{\Gamma,n}(y)= (1-y)U_{\Gamma,n-1}(y) - y(n-2)\left( U_{\Gamma,n-2}(y) + yU_{\Gamma,n-3}(y)   \right). $$ 
\end{corollary}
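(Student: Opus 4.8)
The plan is to derive Corollary~\ref{cor:13241423} as the special case $k_1=k_2=2$ of Theorem~\ref{thm:1-2-gen}, so the only work is to unwind the definitions and simplify the resulting recursion; the substantive content all lies in the proof of Theorem~\ref{thm:1-2-gen} itself.

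First I would check that $\Gamma_{2,2}=\{1324,1423\}$. By definition $\Gamma_{2,2}$ consists of the permutations $\sigma=\sigma_1\sigma_2\sigma_3\sigma_4\in\mathfrak{S}_4$ with $\sigma_1=1$, $\sigma_3=2$, $\sigma_1<\sigma_2$, and $\sigma_3<\sigma_4$. Since then $\{\sigma_2,\sigma_4\}=\{3,4\}$ and both inequalities $\sigma_2>1$ and $\sigma_4>2$ hold automatically, the only possibilities are $\sigma_2\sigma_4=34$ or $\sigma_2\sigma_4=43$, giving $\sigma=1324$ or $\sigma=1423$. Hence Theorem~\ref{thm:1-2-gen} applies to $\Gamma=\{1324,1423\}$ with $k_1=k_2=2$, and in particular the claimed product form $\mbox{NM}_\Gamma(t,x,y)=\left(\frac{1}{U_\Gamma(t,y)}\right)^x$ and the base case $U_{\Gamma,1}(y)=-y$ carry over verbatim.

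Next I would substitute these parameters into the recursion of Theorem~\ref{thm:1-2-gen}. With $k_1=k_2=2$ we have $m=\min\{k_1,k_2\}=2$ and $M=\max\{k_1,k_2\}=2$, so $\binom{n-2}{k_1-1}=\binom{n-2}{1}=n-2$, the term $U_{\Gamma,n-M}(y)$ becomes $U_{\Gamma,n-2}(y)$, and the sum $\sum_{i=1}^{m-1}U_{\Gamma,n-M-i}(y)$ collapses to its single term $i=1$, namely $U_{\Gamma,n-3}(y)$. Thus the recursion $U_{\Gamma,n}(y)=(1-y)U_{\Gamma,n-1}(y)-y\binom{n-2}{k_1-1}\bigl(U_{\Gamma,n-M}(y)+y\sum_{i=1}^{m-1}U_{\Gamma,n-M-i}(y)\bigr)$ becomes exactly $U_{\Gamma,n}(y)=(1-y)U_{\Gamma,n-1}(y)-y(n-2)\bigl(U_{\Gamma,n-2}(y)+yU_{\Gamma,n-3}(y)\bigr)$ for $n\ge 2$, which is the asserted formula. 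Because the argument is a direct specialization, there is no genuine obstacle; the only points requiring a moment's care are the verification that $\Gamma_{2,2}$ really is $\{1324,1423\}$ and the bookkeeping of the binomial coefficient together with the (now degenerate, one-term) inner sum.
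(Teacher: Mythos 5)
Your proposal is correct and matches the paper's own treatment exactly: the paper derives this corollary as the immediate specialization $k_1=k_2=2$ of Theorem~\ref{thm:1-2-gen}, and your verification that $\Gamma_{2,2}=\{1324,1423\}$ together with the substitution $m=M=2$, $\binom{n-2}{1}=n-2$, and the collapse of the inner sum to the single term $U_{\Gamma,n-3}(y)$ is precisely the intended argument.
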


Finally, we shall consider families 
of the form $\Gamma_{k_1,k_2,s} = \Gamma_{k_1,k_2} \cup \{1 \cdots s(s+1)\}$ for some $s \geq \max(k_1,k_2)$.  For example, we will show that 
$$\mbox{NM}_{\Gamma_{2,2,s}}(t,x,y)= \frac{1}{1+ \sum_{n \geq 1}
U_{\Gamma_{2,2,s},n}(y) 
\frac{t^n}{n!}}$$ 
where $U_{\Gamma_{2,2,s},1}(y)=-y$, and for $n \geq 2,$
\begin{align*}U_{\Gamma_{2,2,s},n}(y)&= -yU_{\Gamma_{2,2,s},n-1}(y) - \\
&\ \ \ \sum_{k=0}^{s-2} \left((n-k-1) yU_{\Gamma_{2,2,s},n-k-2}(y)+(n-k-2)
y^2 U_{\Gamma_{2,2,s},n-k-3}(y)\right).
\end{align*}

On the surface, it seems that these recursions are more complicated than 
the recursions for the $U_{\{1324,1423\},n}(y)$'s, but it turns 
out that the resulting polynomials are considerably simpler to analyze. 
For example, we shall give  explicit 
formulas for $U_{\Gamma_{2,2,2},n}(y)$ for all $n \geq 1$. That is, 
we will show that 
\begin{eqnarray*}
U_{\Gamma_{2,2,2},2n}(y) &=& \sum_{i=0}^n (2n-1)\downarrow \downarrow_{n-i} (-y)^{n+i} 
\ \mbox{and} \\
U_{\Gamma_{2,2,2},2n+1}(y) &=& \sum_{i=0}^n (2n) \downarrow \downarrow_{n-i} (-y)^{n+1+i}
\end{eqnarray*}
where for any $x$, $(x)\downarrow \downarrow_{0} =1$ and 
 $(x)\downarrow \downarrow_{k} =x(x-2)(x-4) \cdots (x-2k -2)$ for $k \geq 1$.

The outline of this paper is as follows. 
In Section 2, we will show how to extend the reciprocity method 
of Jones and Remmel \cite{JR2,JR3} to give combinatorial 
interpretations to the polynomials $U_{\Gamma,n}(y)$ in 
the case where all the permutations in $\Gamma$ start with 1 
and have at most one descent. In Section 3, 
we will prove Theorem \ref{thm:1-2-gen} and 
show how to modify it when we add the identity permutation 
in $\mathfrak{S}_{k+1}$ to the corresponding families in the case where 
$k_1 =k_2$. In Section 4, we will prove Theorems \ref{thm:1324123}
and \ref{thm:1324p} and give bijections that will prove 
our closed expressions for the $U_{\{1324,123\},n}(y)$'s.
Finally, in 
Section 5, we will state some open problems and areas for further 
research.

\section{Symmetric Functions}

In this section, we give the necessary background on 
symmetric functions that will be used in our proofs.

A partition of $n$ is a sequence of positive integers 
\begin{math}\la = (\la_1, \ldots ,\la_s)\end{math} such that 
\begin{math}0 < \la_1 \leq \cdots \leq \la_s\end{math} and $n=\la_1+ \cdots +\la_s$. We shall write $\lambda \vdash n$ to denote that $\lambda$ is 
partition of $n$ and we let $\ell(\lambda)$ denote the number of parts 
of $\lambda$. When a partition of $n$ involves repeated parts, 
we shall often use exponents in the partition notation to indicate 
these repeated parts. For example, we will write 
$(1^2,4^5)$ for the partition $(1,1,4,4,4,4,4)$.

Let \begin{math}\Lambda\end{math} denote the ring of symmetric functions in infinitely  
many variables \begin{math}x_1,x_2, \ldots \end{math}. The \begin{math}\nth\end{math} elementary symmetric function \begin{math}e_n = e_n(x_1,x_2, \ldots )\end{math}  and \begin{math}\nth\end{math} homogeneous 
 symmetric function \begin{math}h_n = h_n(x_1,x_2, \ldots )\end{math} are defined by the generating functions given in (\ref{genfns}). 
For any partition \begin{math}\la = (\la_1,\dots,\la_\ell)\end{math}, let \begin{math}e_\la = e_{\la_1} 
\cdots e_{\la_\ell}\end{math} and \begin{math}h_\la = h_{\la_1} 
\cdots h_{\la_\ell}\end{math}.  It is well known that \begin{math}e_0,e_1, \ldots \end{math} is 
an algebraically independent set of generators for \begin{math}\La\end{math}, and hence, 
a ring homomorphism \begin{math}\theta\end{math} on \begin{math}\Lambda\end{math} can be defined  
by simply specifying \begin{math}\theta(e_n)\end{math} for all \begin{math}n\end{math}.

If $\lambda =(\lambda_1, \ldots, \lambda_k)$ is a partition of $n$, 
then a $\lambda$-brick tabloid of shape $(n)$ is a filling 
of a rectangle consisting of $n$ cells with bricks of sizes 
$\lambda_1, \ldots, \lambda_k$ in such a way that no 
two bricks overlap. For example, Figure 
\ref{fig:DIMfig1} shows the six $(1^2,2^2)$-brick tabloids of 
shape $(6)$.

\fig{DIMfig1}{The six $(1^2,2^2)$-brick tabloids of shape $(6)$.}

Let \begin{math}\mathcal{B}_{\la,n}\end{math} denote the set of \begin{math}\la\end{math}-brick tabloids 
of shape \begin{math}(n)\end{math} and let \begin{math}B_{\la,n}\end{math} be the number of \begin{math}\la\end{math}-brick 
tabloids of shape \begin{math}(n)\end{math}.  If \begin{math}B \in \mathcal{B}_{\la,n}\end{math}, we 
will write \begin{math}B =(b_1, \ldots, b_{\ell(\la)})\end{math} if the lengths of 
the bricks in \begin{math}B\end{math}, reading from left to right, are 
\begin{math}b_1, \ldots, b_{\ell(\la)}\end{math}. For example, the brick 
tabloid in the top right position in Figure \ref{fig:DIMfig1} is 
denoted as $(1,2,2,1)$. 
E\u{g}ecio\u{g}lu and the second author  \cite{Eg1} proved that 
\begin{equation}\label{htoe}
h_n = \sum_{\la \vdash n} (-1)^{n - \ell(\la)} B_{\la,n}~ e_\la.
\end{equation}
This interpretation of $h_n$ in terms of $e_n$ will aid us in describing the coefficients of $\Theta_\Gamma(H(t))=U_\Gamma(t,y)$ described 
in the next section, 
which will in turn allow us to compute the coefficients 
$\mbox{NM}_{\Gamma,n}(x,y)$.

\section{Extending the reciprocity method}

In this section, we will show that one can easily extend the reciprocity 
method of \cite{JR,JR2,JR3} to find a combinatorial 
interpretation for $U_{\Gamma,n}(y)$ in the 
case where $\Gamma$ is a set of permutations which all start with 1 
and have at most one descent. We can assume 
that $\Gamma$ contains at most one permutation 
$\sg$ which is an identity permutation. 
That is, if $12 \cdots s$ and $12 \cdots t$ are in $\Gamma$ 
for some $s < t$, then if we consecutively avoid 
$12 \cdots s$, we automatically consecutively avoid $12 \cdots t$. 
Thus $\mathcal{NM}_n(\Gamma) = \mathcal{NM}_n(\Gamma-\{12 \cdots t\})$ 
for all $n$.

We want give a combinatorial interpretation to 
\begin{equation*}
U_{\Gamma}(t,y)=\frac{1}{\mbox{NM}_{\Gamma}(t,1,y)}  = \frac{1}{1+ \sum_{n \geq 1} 
\frac{t^n}{n!} \mbox{NM}_{\Gamma,n}(1,y)},
\end{equation*}
where 
$$\mbox{NM}_{\Gamma,n}(1,y) = \sum_{\sg \in \mathcal{NM}_n(\Gamma)} y^{1+\des(\sg)}.$$ 

We define a homomorphism $\Theta_{\Gamma}$ on 
 the ring of symmetric functions $\Lambda$ by setting $\Theta_{\Gamma}(e_0) = 1$ and, for $n \geq 1,$ 
\begin{equation*} \label{def:Theta}
\Theta_{\Gamma}(e_n) = \frac{(-1)^n}{n!} \mbox{NM}_{\Gamma,n}(1,y).
\end{equation*}
It follows that 
\begin{align*}
\Theta_{\Gamma}(H(t)) & = \sum_{n \geq 0} \Theta_{\Gamma}(h_n)t^n  = \frac{1}{\Theta_{\tau}(E(-t))} = \frac{1}{1 + \sum_{n \geq 1} (-t)^n \Theta_{\Gamma}(e_n)} \\
& = \frac{1}{1 + \sum_{n \geq 1} \frac{t^n}{n!} \mbox{NM}_{\Gamma,n}(1,y)} = 
U_{\Gamma}(t,y).
\end{align*}
By  (\ref{htoe}), we have \begin{eqnarray}\label{eq:basic1}
n! \Theta_{\Gamma}(h_n) &=& n! \sum_{\la \vdash n} (-1)^{n-\ell(\la)} 
B_{\la,n}~ \Theta_{\Gamma}(e_\la) \nonumber \\
&=& n! \sum_{\la \vdash n} (-1)^{n-\ell(\la)} \sum_{(b_1, \ldots, 
b_{\ell(\la)}) \in \mathcal{B}_{\la,n}} \prod_{i=1}^{\ell(\la)}  
\frac{(-1)^{b_i}}{b_i!} \mbox{NM}_{\Gamma,b_i}(1,y) \nonumber \\
&=& \sum_{\la \vdash n} (-1)^{\ell(\la)} \sum_{(b_1, \ldots, b_{\ell(\la)}) \in \mathcal{B}_{\la,n}} \binom{n}{b_1, \ldots, b_{\ell(\la)}}
\prod_{i=1}^{\ell(\la)}  \mbox{NM}_{\Gamma,b_i}(1,y).
\end{eqnarray}

Next, we want to give a combinatorial interpretation to the right hand side of (\ref{eq:basic1}). We select a brick tabloid $B = (b_1, b_2, \dots, b_{\ell(\la)} )$ of shape $(n)$ filled with bricks whose sizes induce the partition $\la$. We interpret the multinomial coefficient $\binom{n}{b_1, \ldots, b_{\ell(\la)}}$ as the number of ways to choose an ordered set partition 
$\mathcal{S} =(S_1, S_2, \dots, S_{\ell(\la)})$ of $\{1,2, \dots, n\}$ such that $|S_i| = b_i$ for $i = 1, \dots, \ell(\la).$ For each brick $b_i,$ we then fill the cells of $b_i$ with numbers from $S_i$ such that the entries in the brick reduce to a permutation $\sg^{(i)} = \sg_1 \cdots \sg_{b_i}$ in $\mathcal{NM}_{b_i}(\Gamma).$ We label each descent of $\sg$ that occurs within each brick as well as the last cell of each brick by $y.$  This accounts for the factor 
$y^{\des(\sg^{(i)})+1}$ within each brick. Finally, we use the factor $(-1)^{\ell(\la)}$ to change the label of the last cell of each brick from $y$ to $-y$. We will denote 
the filled labeled brick tabloid constructed in this 
way as $\langle B,\mathcal{S},(\sg^{(1)}, \ldots, \sg^{(\ell(\la))})\rangle$.  

For example, when $n = 17, \Gamma = \{1324, 1423, 12345\},$ and 
$B = (9,3,5,2),$ consider the ordered set partition $\mathcal{S}=(S_1,S_2,S_3,S_4)$ of $\{1,2,\dots, 17\}$, where $$S_1=\{ 2,5,6,9,11,15,16,17,19 \}, S_2 = \{7,8,14\},S_3 = \{1,3,10,13,18\},S_4 = \{4,12\},$$ and the permutations 
$\sg^{(1)} = 1~2~4~6~5~3~7~9~8 \in \mathcal{NM}_{9}(\Gamma),\sg^{(2)} = 1~3~2 \in \mathcal{NM}_{7}(\Gamma),\sg^{(3)} = 5~1~2~4~3 \in \mathcal{NM}_{5}(\Gamma),\mbox{~and~} \sg^{(4)} = 2~1 \in \mathcal{NM}_{2}(\Gamma).$
The construction of 
$\langle B,\mathcal{S},(\sg^{(1)}, \ldots, \sg^{(4)})\rangle$ is then
pictured in Figure \ref{fig:CFLBTab}.

\fig{CFLBTab}{The construction of a filled-labeled-brick tabloid.}

It is easy to see that we can recover 
the triple $ \langle B, (S_1, \dots, S_{\ell(\la)}), (\sg^{(1)}, \dots,\sg^{(\ell(\la))}  ) \rangle$ from $B$ and the permutation 
$\sg$ which is obtained by reading the entries in the 
cells from right to left.  We let $\mathcal{O}_{\Gamma, n}$ denote the set of all filled labeled brick tabloids created this way. That is, $\mathcal{O}_{\Gamma, n}$ consists of all pairs $O = (B, \sg )$ where 
\begin{enumerate}
\item $B = (b_1, b_2, \dots, b_{\ell(\la)})$ is a brick tabloid of shape $n$,  
\item $\sg = \sg_1  \cdots \sg_n$ is a permutation in $\mathfrak{S}_n$ such that there is no $\Gamma$-match of $\sg$ which 
lies entirely in a single brick of $B$, and 
\item if there is a cell $c$ such that a brick $b_i$ contains both cells $c$ and $c+1$ and $\sg_c > \sg_{c+1}$, then cell $c$ is labeled with a $y$ and the last cell of any brick is labeled with $-y$.
\end{enumerate}

We define the sign of each $O$ to be $sgn(O) = (-1)^{\ell(\la)}.$ The weight $W(O)$ of $O$ is defined to be the product of all the labels $y$ used in the brick. Thus, the weight of the filled labeled brick tabloid from Figure \ref{fig:CFLBTab} above is $W(O) = y^{11}.$ It follows that 
\begin{equation}\label{eq:basic2}
n!\Theta_{\Gamma}(h_n) = \sum_{O \in \mathcal{O}_{\Gamma,n}} sgn(O) W(O).
\end{equation}

Following \cite{JR2}, we next define a sign-reversing, weight-preserving involution $I: \mathcal{O}_{\Gamma, n} \rightarrow \mathcal{O}_{\Gamma, n} $. Given a filled labeled brick tabloid $(B,\sg) \in \mathcal{O}_{\Gamma, n} $ where 
$B=(b_1, \ldots, b_k)$, we read the cells of $(B,\sg)$ from left to right, looking for the first cell $c$ for 
which either 
\begin{enumerate}
\item[(i)] cell $c$ is labeled with a $y$, or 
\item[(ii)] cell $c$ is at the end of brick $b_i$ where $\sg_c > \sg_{c+1}$ and there is no $\Gamma$-match of $\sg$ that lies entirely in the cells of the bricks $b_i$ and $b_{i+1}$.  
\end{enumerate}

In case (i), we define $I_{\Gamma}(B,\sg)$ to be the filled labeled brick tabloid obtained from $(B,\sg)$ by breaking the brick $b_j$ that contains cell $c$ into two bricks $b_j'$ and $b_j''$ where $b_j'$ contains the cells of $b_j$ up to and including the cell $c$ while $b_j''$ contains the remaining cells of $b_j$. In addition, we change the labeling of cell $c$ from $y$ to $-y$. In case (ii), $I_{\Gamma}(B,\sg)$ is obtained by combining the two bricks $b_i$ and $b_{i+1}$ 
into a single brick $b$ and changing the label of cell $c$ from $-y$ to $y$. If neither case occurs, then we let $I_{\Gamma}(B,\sg) = (B,\sg)$. 

For instance, the image of the filled labeled brick tabloid from the Figure \ref{fig:CFLBTab} under this involution is shown below in Figure \ref{fig:CFLBTab1}.  


\fig{CFLBTab1}{$I_{\Gamma}(O)$ for $O$ in Figure \ref{fig:CFLBTab}.}

We claim that as long as each permutation in $\Gamma$ has at most 
one descent, then \begin{math}I_{\Gamma}\end{math} is an involution. 
Let $(B,\sg)$ be an element of $\mathcal{O}_{\gamma,n}$ which 
is not a fixed point of $I$. Suppose that $I(B,\sg)$ is defined 
using case (i) where we split a brick \begin{math}b_j\end{math} at 
cell \begin{math}c\end{math} which is labeled with a \begin{math}y\end{math}.  
In that case, we let 
\begin{math}a\end{math} be the number in cell \begin{math}c\end{math} and \begin{math}a^\prime\end{math} be the number
in cell \begin{math}c+1\end{math} which must also be in brick \begin{math}b_{j}\end{math}. Since cell $c$ is labeled with $y$, it must be the case 
that $a > a'$. Moreover, there can be no cell labeled \begin{math}y\end{math} that occurs before cell \begin{math}c\end{math} since otherwise 
we would not use cell \begin{math}c\end{math} to define $I(B,\sg)$. 
In this case, we must ensure that when we split \begin{math}b_j\end{math} into \begin{math}b_j^\prime\end{math} and \begin{math}b_j^{\prime \prime}\end{math}, 
we cannot combine the brick \begin{math}b_{j-1}\end{math} with  
\begin{math}b_j^\prime\end{math} because the number in that last cell of 
\begin{math}b_{j-1}\end{math} is greater than the number in the first cell of 
\begin{math}b_j^\prime\end{math} and there is no $\Gamma$-match 
in the cells of 
\begin{math}b_{j-1}\end{math} and  \begin{math}b_j^\prime\end{math} since 
in such a situation, $I_\Gamma(I_\Gamma(B,\sg)) \neq (B,\sg)$.   
However, since we always take an action on the leftmost cell possible 
when defining $I_\Gamma(B,\sg)$, we know that we cannot combine  \begin{math}b_{j-1}\end{math} and \begin{math}b_j\end{math} so that 
there must be a $\Gamma$-match in the cells of \begin{math}b_{j-1}\end{math} and \begin{math}b_j\end{math}. 
Moreover, if we could now combine bricks $b_{j-1}$ and $b_j^\prime$, then  
that $\Gamma$-match must have involved the number 
\begin{math}a^\prime\end{math} and 
the number in cell $d$ which is the last cell in brick $b_{j-1}$. But that is impossible because then there would be two descents among the numbers
between cell $d$ and cell $c+1$ which would violate our assumption 
that the elements of $\Gamma$  have at most one descent. Thus whenever we apply case (i) to 
define $I_\Gamma(B,\sg)$, 
the first action that we can take is to combine bricks 
\begin{math}b_j^\prime\end{math} and \begin{math}b_j^{\prime \prime}\end{math} so that \begin{math}I_{\Gamma}^2(B,\sg) = (B,\sg)\end{math}. 

If we are in case (ii), then again we can assume that there 
are no cells labeled \begin{math}y\end{math} that occur before cell \begin{math}c\end{math}.  When we combine 
brick \begin{math}b_i\end{math} and \begin{math}b_{i+1}\end{math}, then we will label cell \begin{math}c\end{math} with a \begin{math}y\end{math}. 
It is clear that combining the cells of \begin{math}b_i\end{math} and \begin{math}b_{i+1}\end{math} cannot 
help us combine the resulting brick \begin{math}b\end{math} with 
$b_{j-1}$ since, if there were a $\Gamma$-match that prevented us 
from combining bricks $b_{j-1}$ and $b_j$, then that same 
$\Gamma$-match will prevent us from combining $b_{j-1}$ and $b$.  
Thus, the first place where we can apply the involution 
will again be cell \begin{math}c\end{math} which is now labeled with a \begin{math}y\end{math} so that \begin{math}I_{\Gamma}^2(B,\sg) =(B,\sg)\end{math}.

It is clear that if $I_{\Gamma}(B,\sg) \neq (B,\sg)$, then $$sgn(B,\sg)W(B,\sg) = -sgn(I_{\Gamma}(B,\sg))W(I_{\Gamma}(B,\sg)).$$ Thus it follows from (\ref{eq:basic2}) that 
\begin{equation*}
n!\Theta_\Gamma(h_n) =  \sum_{O \in \mathcal{O}_{\Gamma,n}} \sgn{O} W(O) = 
\sum_{O \in \mathcal{O}_{\Gamma,n}, I_{\Gamma}(O) =O} \sgn{O} W(O). 
\end{equation*}
Hence if all permutations in $\Gamma$ have 
at most one descent, then 
\begin{equation}\label{eq:KEY}
U_{\Gamma,n}(y) = 
\sum_{O \in \mathcal{O}_{\Gamma,n}, I_{\Gamma}(O) =O} \sgn{O} W(O). 
\end{equation} 
Thus to compute $U_{\Gamma,n}(y)$, we must analyze the fixed points 
of $I_{\Gamma}$.  

If $(B,\sg)$ where $B =(b_1, \ldots, b_k)$ and $\sg = \sg_1 \cdots \sg_n$ 
is a fixed point of the involution $I_{\Gamma}$, then $(B,\sg)$ cannot have any cell labeled $y$ which means that the elements of $\sg$ that lie within any brick $b_j$ of $B$ must be increasing. If it is the case that an identity permutation $12 \cdots (k+1)$ is in $\Gamma$, then no brick of $B$ can have length greater than $k$. Next, consider any two consecutive bricks $b_i$ and $b_{i+1}$ in $B$.  Let $c$ be the last cell of $b_i$ and $c+1$ be the first cell of $b_{i+1}$.  Then either $\sg_c < \sg_{c+1}$ in which case we say there is an increase between bricks $b_i$ and $b_{i+1}$, or $\sg_c > \sg_{c+1}$ in which case we say there is a decrease between bricks $b_i$ and $b_{i+1}$. In the latter case, there must be a $\Gamma$-match of $\sg$ that lies in the cells of $b_i$ and $b_{i+1}$ which must necessarily involve $\sg_c$ and $\sg_{c+1}$. Finally, we claim that 
since all the permutations in $\Gamma$ start with 1, 
the minimal elements within the bricks of $B$ must increase from left to right. That is, consider two consecutive bricks $b_i$ and $b_{i+1}$ and let $c_i$ and $c_{i+1}$ be the first cells of $b_i$ and $b_{i+1},$ respectively. Suppose that $\sg_{c_i} > \sg_{c_{i+1}}$.  Let $d_i$ be the last cell of $b_i$. Then clearly $\sg_{c_{i+1}} < \sg_{c_i} \leq \sg_{d_i}$ so that there is a decrease between brick $b_i$ and brick $b_{i+1}$ and hence there must be a $\Gamma$-match of $\sg$ that lies in the cells of $b_i$ and $b_{i+1}$ that involves the elements of $\sg_{d_i}$ and $\sg_{c_{i+1}}$. But this is impossible since our assumptions ensure that $\sg_{c_{i+1}}$ is the smallest element that lies in the bricks $b_i$ and $b_{i+1}$ so that it can only play the role of $1$ in any $\Gamma$-match.  But since every element of $\Gamma$ starts with 1, then any $\Gamma$-match that lies in $b_i$ and $b_{i+1}$ that involves $\sg_{c_{i+1}}$ must lie entirely in brick $b_{i+1}$ which contradicts the fact that $(B,\sg)$ was a fixed point of $I_\Gamma$.

Thus, we have the following lemma describing the fixed points of the involution $I_{\Gamma}$.

\begin{lemma} \label{lem:keyGamma}
Let $\Gamma$ be a set of permutations which all start with 
1 and have at most one descent. 
Let $\mathbb{Q}(y)$ be the set of rational functions in 
the variable $y$ over the rationals $\mathbb{Q}$ and let 
$\Theta_\Gamma:\Lambda \rightarrow 
\mathbb{Q}(y)$ be the  ring homomorphism defined by setting  
$\Theta_{\Gamma}(e_0) =1$,  and  
$\Theta_{\Gamma}(e_n) = \frac{(-1)^n}{n!} \mbox{NM}_{\Gamma,n}(1,y)$ for $n \geq 1$.
 Then 
\begin{equation*}
n!\Theta_\Gamma (h_n) = 
\sum_{O \in \mathcal{O}_{\Gamma,n},I_\Gamma(O) = O}\sgn{O}W(O)
\end{equation*}
where $\mathcal{O}_{\Gamma,n}$ is the set of objects and 
$I_\Gamma$ is the involution defined above. Moreover, 
$O =(B,\sg) \in \mathcal{O}_{\Gamma,n}$ where $B=(b_1, \ldots, b_k)$ and 
$\sg = \sg_1 \cdots \sg_n$ is 
a fixed point of $I_\Gamma$ if and only if $O$ satisfies the following 
four properties: 
\begin{enumerate}
\item there are no cells labeled with $y$ in $O$, i.e., the 
elements in each brick of $O$ are increasing,
\item the first elements in each brick of $O$ form an increasing 
sequence, reading from left to right, 
\item if \begin{math}b_i\end{math} and \begin{math}b_{i+1}\end{math} are two consecutive bricks in \begin{math}B\end{math}, then 
either (a) there is increase between \begin{math}b_i\end{math} and \begin{math}b_{i+1}\end{math}, i.e., $\sg_{\sum_{j=1}^i b_j} < \sg_{1+\sum_{j=1}^i b_j}$, or (b) 
there is a decrease  between \begin{math}b_i\end{math} and \begin{math}b_{i+1}\end{math}, i.e., $\sg_{\sum_{j=1}^i b_j} > \sg_{1+\sum_{j=1}^i b_j}$,
and  there is a 
$\Gamma$-match contained in the elements of the cells of \begin{math}b_i\end{math} and \begin{math}b_{i+1}\end{math} which must necessarily involve 
$\sg_{\sum_{j=1}^i b_j}$ and  $\sg_{1+\sum_{j=1}^i b_j}$, and 
\item if $\Gamma$ contains an identity permutation 
$12 \cdots (k+1)$, then $b_i \leq k$ for all $i$. 
\end{enumerate}
\end{lemma}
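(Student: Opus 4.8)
The plan is to assemble the machinery built up above into a proof with three movements: first, rewrite $n!\,\Theta_\Gamma(h_n)$ as a signed, weighted sum over the combinatorial objects $\mathcal{O}_{\Gamma,n}$; second, verify that $I_\Gamma$ is a sign-reversing, weight-preserving involution on $\mathcal{O}_{\Gamma,n}$; third, identify its fixed points. For the first movement one applies $\Theta_\Gamma$ to the E\u{g}ecio\u{g}lu--Remmel identity \eqref{htoe}, expands $\Theta_\Gamma(e_\lambda)$ via the definition $\Theta_\Gamma(e_b)=\tfrac{(-1)^b}{b!}\mbox{NM}_{\Gamma,b}(1,y)$, absorbs the factor $\tfrac{n!}{b_1!\cdots b_{\ell(\lambda)}!}$ as the number of ordered set partitions of $\{1,\dots,n\}$ with block sizes $b_1,\dots,b_{\ell(\lambda)}$, and reads each $\mbox{NM}_{\Gamma,b_i}(1,y)=\sum_{\sg^{(i)}\in\mathcal{NM}_{b_i}(\Gamma)}y^{1+\des(\sg^{(i)})}$ as the choice of a $\Gamma$-nonmatching filling of brick $b_i$ together with its descent and last-cell labels; reading the filled tabloid from right to left recovers the pair $(B,\sg)$, which is exactly \eqref{eq:basic2}.

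For the second movement, sign-reversal and weight-preservation are immediate: breaking a brick at a $y$-labeled cell $c$ increases $\ell(\lambda)$ by one and converts the $y$ at $c$ into the $-y$ marking the end of the new left brick, so $W$ is unchanged while $\sgn$ flips, and merging at a between-brick descent does the reverse. The substantive point is $I_\Gamma^2=\mathrm{id}$, and this is precisely where the hypothesis $\des(\tau)\le 1$ for all $\tau\in\Gamma$ is used. In case (i), after splitting $b_j$ into $b_j',b_j''$ at the leftmost $y$-cell $c$, one must rule out that the leftmost available action on $I_\Gamma(B,\sg)$ is a case-(ii) merge of $b_{j-1}$ with $b_j'$: since in $(B,\sg)$ itself we always act on the leftmost available cell, $b_{j-1}$ and $b_j$ could not be merged there, so a $\Gamma$-match spans those two bricks; were $b_{j-1}$ also mergeable with $b_j'$, that match could not involve any cell strictly to the right of $c$ in $b_j$ without forcing two descents — one at the $b_{j-1}/b_j$ boundary and one at $c$ — inside a window reducing to an element of $\Gamma$, contradicting $\des\le 1$. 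Hence the first available action on $I_\Gamma(B,\sg)$ re-merges $b_j',b_j''$ at $c$, and $I_\Gamma^2(B,\sg)=(B,\sg)$; the case-(ii) argument is the mirror image, noting that a $\Gamma$-match obstructing the merge of $b_{j-1}$ and $b_j$ still obstructs the merge of $b_{j-1}$ with the combined brick. This delicate case analysis, turning entirely on the single-descent hypothesis, is the main obstacle in the proof.

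For the third movement one characterizes the fixed points. A fixed point has no $y$-labeled cell (else case (i) fires), so each brick is increasing; if $12\cdots(k+1)\in\Gamma$, an increasing brick of length exceeding $k$ would itself be a $\Gamma$-match, forcing $b_i\le k$. For consecutive bricks $b_i,b_{i+1}$ there is either an increase at the boundary or a decrease, and in the decrease case, case (ii) would fire unless a $\Gamma$-match spans $b_i$ and $b_{i+1}$, which (involving a descent) must use the two boundary cells. Finally, the first entries of the bricks increase from left to right: if $\sg_{c_i}>\sg_{c_{i+1}}$ for the first cells $c_i,c_{i+1}$ of $b_i,b_{i+1}$, then since $\sg_{c_{i+1}}<\sg_{c_i}\le$ (last entry of $b_i$) the boundary is a decrease, so a $\Gamma$-match spans the two bricks; but $\sg_{c_{i+1}}$ is then the least entry of $b_i\cup b_{i+1}$ and can only play the leading $1$ of a pattern in $\Gamma$, which forces the whole match to lie inside $b_{i+1}$, a contradiction. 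Conversely, any $(B,\sg)$ satisfying properties (1)--(4) has no $y$-cell and no mergeable consecutive pair, hence is fixed by $I_\Gamma$. Combining this with \eqref{eq:basic2} and \eqref{eq:KEY} yields the stated evaluation of $n!\,\Theta_\Gamma(h_n)$ and completes the proof.
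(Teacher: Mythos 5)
Your proposal is correct and follows essentially the same route as the paper: the same brick-tabloid interpretation of $n!\,\Theta_\Gamma(h_n)$ via the E\u{g}ecio\u{g}lu--Remmel expansion of $h_n$ in terms of the $e_\lambda$, the same split/merge involution $I_\Gamma$ with the two-descents-in-one-window contradiction exploiting $\des(\tau)\le 1$, and the same fixed-point analysis using the hypothesis that every pattern in $\Gamma$ begins with $1$. The only (welcome) addition is your explicit remark on the converse direction of the fixed-point characterization, which the paper's ``if and only if'' leaves implicit.
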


Note that since $U_{\Gamma,n}(y) = n!\Theta_\Gamma (h_n)$, 
Lemma \ref{lem:keyGamma} gives us a combinatorial interpretation 
of $U_{\Gamma,n}(y)$. Since the weight of 
of any fixed point $(B,\sg)$ of $I_\Gamma$ is $-y$ raised to 
the number of bricks in $B$, it follows that 
$U_{\Gamma,n}(-y)$ is always a polynomial 
with non-negative integer coefficients.  We will exploit this combinatorial 
interpretation to prove the main results of this paper.

\section{Proof of Theorem \ref{thm:1-2-gen}}

Let $k_1,k_2 \geq 2$ and $p= k_1+k_2$. We consider the family of permutations $\Gamma = \Gamma_{k_1,k_2}$ in $\mathfrak{S}_p$ where 
$$\Gamma_{k_1,k_2} = \{\sigma \in \mathfrak{S}_p: \sigma_1=1, \sigma_{k_1+1}=2, \sigma_1 < \sigma_2< \cdots<\sigma_{k_1}~ \&~\sigma_{k_1+1} < \sigma_{k_1+2}< \cdots<\sigma_{p} \}.$$ 
We start this section by giving a proof of Theorem 
\ref{thm:1-2-gen}. At the end of this section, we shall 
consider how to 
compute $U_{\Gamma_{k_1,k_1,s}}(y,t)$ where 
$$ \Gamma_{k_1,k_1,s} = \Gamma_{k_1,k_1} \cup \{12 \cdots s (s+1)\}.$$

By (\ref{eq:KEY}), we must show that the coefficients 
$$U_{\Gamma,n}(y) = \sum_{O \in \mathcal{O}_{\Gamma,n}, I_{\Gamma}(O) =O} \sgn{O} W(O)$$
have the following properties:
\begin{enumerate} 
\item  $U_{\Gamma,1}(y)=-y$, and 
\item for $n>1$, $U_{\Gamma,n}(y) = (1-y)U_{\Gamma,n-1}(y) -y\binom{n-2}{k_1-1}\left( U_{\Gamma,n-M}(y) +y\sum_{i=1}^{m-1}U_{\Gamma,n-M-i}(y) \right),$ where $m = \min\{k_1, k_2\}$ and $M = \max\{k_1,k_2\}.$
\end{enumerate}

We will divide the proof into two cases, one where 
$k_1 \geq k_2$ and the other where $k_1 < k_2$. \\
\ \\
{\bf Case I.} $k_1 \geq k_2$.\\

Let $(B,\sg)$ be a fixed point of $I_{\Gamma}$ where 
$B=(b_1, \ldots, b_k)$ and $\sg=\sg_1 \cdots \sg_n$. 
We know that 1 is in the first cell of  $(B,\sg)$. We claim that 2 must be in cell 2 or cell $k_1+1$ of  $(B,\sg)$.  To see this, suppose that 2 is in cell $c$ where $c \ne 2$ and $c\ne k_1+1.$ Since there is no descent within any brick, 2 must be the first cell of its brick. Moreover, since the minimal elements 
of the bricks form an increasing sequence, reading from left to right, 
2 must be in the first cell of the second brick $b_2$. 
Thus, 1 is in the first cell of the first brick $b_1$ and 2 is in the first cell of the second brick $b_2$. Since $c > 2$, there is a decrease 
between bricks $b_1$ and $b_2$ and, hence, there must be 
a $\Gamma$-match of $\sg$ contained cells of $b_1$ and $b_2$ 
which involves 2 and the last cell of $b_1$.  Since 
all the elements of $\Gamma$ start with 1, this $\Gamma$-match 
must also involve 1 since only 1 can play the role of 1 in a 
$\Gamma$-match that involves 2 and the last cell of $b_1$. 
But in all such $\Gamma$-matches, 2 must be in cell $k_1+1$. 
Since $c \neq k_1 +1$, this means that there can 
be no $\Gamma$-match contained in the cells of $b_1$ and 
$b_2$ which contradicts the fact that $(B,\sg)$ is a 
fixed point of $I_\Gamma$. 
 
Thus, we have two subcases.

\ \\
{\bf Subcase 1.} 2 is in cell 2 of $(B,\sg)$.\\
\ \\
In this case there are two possibilities, namely, either 
(i) 1 and 2 are both in the first brick $b_1$ of $(B,\sg)$ or (ii) 
brick $b_1$ is a single cell filled with 1 and 2 is in the first cell of the second brick $b_2$ of 
$(B,\sg)$.  In either case, we know that 1 is not part of a 
$\Gamma$-match in $(B,\sg)$. So if we remove cell 1 from $(B,\sg)$ and subtract $1$ from the elements in the remaining cells, we will obtain a fixed point $O'$ of $I_{\Gamma}$ in $\mathcal{O}_{\Gamma,n-1}.$

Moreover, we can 
create a fixed point $O=(B,\sg) \in \mathcal{O}_n$ satisfying conditions 
(1), (2), (3)  and (4) of Lemma \ref{lem:keyGamma} where 
$\sg_2 =2$ by starting with a fixed point $(B',\sg') \in 
\mathcal{O}_{\Gamma,n-1}$ of $I_\Gamma$, where $B' =(b_1', \ldots, b_r')$ and 
$\sg' =\sg_1' \cdots \sg_{n-1}'$, and then letting 
 $\sg = 1 (\sg_1'+1) \cdots (\sg_{n-1}' +1)$, and 
setting  $B = (1,b_1', \ldots, b_r')$ 
or setting $B = (1+b_1', \ldots, b_r')$.

It follows that fixed points in Case 1 will contribute 
$(1-y)U_{\Gamma,n-1}(y)$ to 
$U_{\Gamma,n}(y)$.\\
\ \\
{\bf Subcase 2.} 2 is in cell $k_1+1$ of $(B,\sg)$. \\
\ \\
Since there is no decrease within the bricks of $(B,\sg)$ and the first numbers of the bricks are increasing, reading from left to right, it must be the case that 2 is in the first cell of $b_2.$  Thus $b_1$ has exactly $k_1$ cells. In addition, $b_2$ has at least $k_2$ cells since otherwise, there could be no $\Gamma$-match contained in the cells of $b_1$ and $b_2$ and we could combine the bricks $b_1$ and $b_2$, which would mean that $(B,\sg)$ is not a fixed point of $I_{\Gamma}.$ By our argument above, it must be the 
case that the $\Gamma$-match of $\sg$ contained in the cells 
of $b_1$ and $b_2$ must start in the first cell. 
We first choose $k_1-1$ numbers to fill in the remaining cells of $b_1.$ There are \(\binom{n-2}{k_1-1}\) ways to do this. For each such choice, 
we let $O'$ be the result by removing the first $k_1$ cells from $(B,\sg)$ 
and replacing the $i^{th}$ largest remaining number by $i$ for 
$i=1, \ldots, n-k_1$, then $O'$ will be a fixed point in $\mathcal{O}_{\Gamma,n-k_1}$ whose first brick is of size 
greater than or equal to $k_2$.

On the other hand, suppose that we 
 start with $O' \in \mathcal{O}_{\Gamma,n-k_1}$ 
which is a fixed point of $I_\Gamma$ and whose first brick is of size greater than or equal to $k_2$. Then we can 
take any $k_1-1$ numbers $1< a_1 < a_2 < \cdots < a_{k_1-1} \leq n$ 
and add a new brick at the start which contains 
$1,a_1, \ldots a_{k_1-1}$ followed by $O''$ which is the result  
of replacing the numbers in $O'$ by the numbers in $\{1, \ldots, n\} - \{1,a_1, \ldots a_{k_1-1}\}$ maintaining the same relative order, then we will create a fixed point $O$ of $I_\Gamma$ of size $n$ whose first brick is of size $k_1$ and whose second brick starts with 2. 

Thus we need to count the number of fixed points in $\mathcal{O}_{\Gamma,n-k_1}$whose first brick has size at least $k_2$. 
Suppose that $V=(D,\tau)$ is a fixed point 
of $\mathcal{O}_{\Gamma,n-k_1}$ where $D=(d_1, \ldots, d_k)$ and 
$\tau = \tau_1 \cdots \tau_{n-k_1}$.  Now 
if $d_1 = j < k_2$, then there cannot be a decrease 
between bricks $d_1$ and $d_2$ because otherwise 
there would have been a $\Gamma$-match starting at cell 
1 contained in the bricks $d_1$ and $d_2$ which is impossible 
since all permutations in $\Gamma$ have their only descent 
at position $k_1 > j$.  This means 
that the first brick $d_1$ must be filled with $1, \ldots j$.  That 
is, since the minimal elements of the bricks are 
increasing reading from left to right, we must 
have that the first element of $d_2$, namely $\tau_{j+1}$, 
is less than all the elements to its right and we 
have shown that all the elements in the first brick 
are less than $\tau_{j+1}$. It follows 
that $\tau_1 \cdots \tau_{j+1} = 12 \cdots j(j+1)$. 
Therefore, if we let $V'$ be the result of removing the entire first brick of $V$ and subtracting $j$ from the remaining numbers, then $V'$ is a fixed point in  $\mathcal{O}_{\Gamma,n-k_1-j}.$

It follows that 
\begin{equation*} 
U_{\Gamma,n-k_1}(y) - \sum_{j=1}^{k_2-1}(-y)U_{\Gamma,n-k_1-j}(y)
\end{equation*}
equals the sum over all fixed points 
of $I_{\Gamma,n-k_1}$ whose first brick has size at least $k_2$. 
Hence the contribution of fixed points in Case 2 to $U_{\Gamma,n}(y)$ is
\begin{equation*}
 (-y)\binom{n-2}{k_1-1}\left(U_{\Gamma,n-k_1}(y) + \sum_{j=1}^{k_2-1}y U_{\Gamma,n-k_1-j}(y)\right).
\end{equation*}

Combining the two cases, we see that for $n > 1$, 
\begin{equation}\label{eq:fincaseI}
U_{\Gamma,n}(y) = (1-y)U_{\Gamma,n-1}(y) -y\binom{n-2}{k_1-1}\left( U_{\Gamma,n-k_1}(y) +y\sum_{j=1}^{k_2-1}U_{\Gamma,n-k_1-i}(y) \right).
\end{equation}
\ \\
{\bf Case II.} $k_1 < k_2$. \\

Let $O =(B,\sg)$ be a fixed point of $I_{\Gamma}$ where 
$B=(b_1, \ldots, b_k)$ and $\sg = \sg_1 \cdots \sg_n$. 
We know that 1 is in the first cell of  $O$. 
By the same argument as in Case I, 
we know that 2 must be in cell 2 or cell $k_1+1$ of  $O$. We 
now consider two cases depending on the position of 2 in $O$. \\
\ \\
{\bf Subcase A.} 2 is in cell 2 of $(B,\sg)$.\\
\ \\
By the same argument that 
we used in Subcase 1 of Case I, we can conclude that the fixed points 
of $I_\Gamma$ in 
Subcase A will contribute $(1-y)U_{\Gamma,n-1}(y)$ to $U_{\Gamma,n}(y)$.\\
\ \\
{\bf Subcase B.} 2 is in cell $k_1+1$ of $(B,\sg)$. \\
\ \\
Since the minimal elements of the bricks are increasing, reading from left to right, it must be the case that 2 is in the first cell of $b_2.$ Thus, $b_1$ has exactly $k_1$ cells, $b_2$ has at least $k_2$ cells, and there is a 
$\Gamma_{k_1,k_2}$-match in the cells of $b_1$ and $b_2$ which 
must start at cell 1. 

We first choose $k_1-1$ numbers to fill in the remaining cells of $b_1$. 
There are \(\binom{n-2}{k_1-1}\) ways to do this. For each of such choice, 
let $d_1< \cdots < d_{k_2-k_1-1}$ be the smallest $k_2-k_1-1$ numbers in $\{1,2, \dots,n\}-\{\sg_1, \ldots, \sg_{k_1+1}\}$. 
We claim that it must be the case that 
$\sg_{k_1+1+i} = d_i$ for $i = 1, \ldots, k_2-k_1-1$.  If not, 
let $j$ be the least $i$ such that $\sg_{k_1+1+i} \neq d_i$. 
Then $d_i$ cannot be in brick $b_2$ so that it must be 
the first element in brick $b_3$.  But 
then there will be a decrease between bricks $b_2$ and $b_3$ which means that there must be a $\Gamma_{k_1,k_2}$-match contained in 
the cells of $b_2$ and $b_3$.  Note that there is only one 
descent in each permutation of $\Gamma_{k_1,k_2}$ and 
this descent must occur at position $k_1$. It follows 
that this $\Gamma_{k_1,k_2}$-match must start at 
the $(k_2-k_1)^{th}$ cell of $b_2$.  But this is impossible 
since our assumption will ensure 
that $\sg_{k_1+1+(k_2-k_1-1)} = \sg_{k_2} > d_i$.

It then follows that if we let $O'$ be the result by removing the first $k_2$ cells from $O$ and adjusting the remaining numbers in the cells, then $O'$ will be a fixed point in $\mathcal{O}_{\Gamma,n-k_2}$ that starts with at least $k_1$ cells in the first brick. Then we can argue exactly as we did 
in Subcase 2 of Case I the contribution of fixed points in Case B to $U_{\Gamma,n}(y)$ is 
\begin{equation*}
-y \binom{n-2}{k_1-1}\left(U_{\Gamma,n-k_2}(y) +\sum_{j=1}^{k_1-1}yU_{\Gamma,n-k_2-j}(y)\right).
\end{equation*}
\ \\It follows that in Case II
\begin{equation}\label{eq:fincaseII}
U_{\Gamma_{k_1,k_2},n}(y) = (1-y) U_{\Gamma_{k_1,k_2},n}(y) -y \binom{n-2}{k_1-1}\left(U_{\Gamma,n-k_2}(y) +\sum_{j=1}^{k_1-1}yU_{\Gamma,n-k_2-j}(y)\right)
\end{equation}  
for $n > 1$. 
\ \\

Comparing equations (\ref{eq:fincaseI}) and (\ref{eq:fincaseII}), 
it is easy to see that if $m=\min(k_1,k_2)$ and 
$M = \max(k_1,k_2)$, then  
\begin{align*}
\displaystyle   U_{\Gamma_{k_1,k_2},n}(y) &= (1-y)U_{\Gamma_{k_1,k_2},n-1}(y) -y\binom{n-2}{k_1-1}\left( U_{\Gamma,n-M}(y) +y\sum_{i=1}^{m-1}U_{\Gamma,n-M-i}(y) \right) 
\end{align*}
for all $n >1$ which proves Theorem \ref{thm:1-2-gen}.

For example, consider the special case where $k_1 = k_2 = 2$. 
Then by Corollary \ref{cor:13241423}, 
\begin{align*} 
U_{\Gamma_{2,2},n}(y)= (1-y)U_{\Gamma_{2,2},n-1}(y) - 
y(n-2)\left( U_{\Gamma_{2,2},n-2}(y) + yU_{\Gamma_{2,2},n-3}(y)\right).\end{align*}

In Table \ref{tab:U1-2-}, we computed $U_{\Gamma_{2,2},n}(y)$ for $n \leq 14$. 

\begin{table}[h]
\begin{center}
\begin{tabular}{c|l}
n & $U_{\Gamma_{2,2},n}(-y)$ \\ 
\hline 1 & $y$\\
2 & $y + y^2$ \\
3 & $y + 2y^2 + y^3$\\
4 & $y + 5y^2 +3y^3 + y^4$\\
5 & $y + 9y^2 + 11y^3 + 4y^4 + y^5$\\
6 & $y+14y^2+36y^3 + 19y^4 +5y^5 + y^6$ \\ 
7 & $y+20y^2+90y^3 + 85y^4 +29y^5 + 6y^6 + y^7$ \\
8 & $y+27y^2+188y^3 + 337y^4 +162y^5 + 41y^6 + 7y^7 + y^8$ \\ 
9 & $y+35y^2+348y^3 + 1057y^4 +842y^5 + 273y^6 + 55y^7 + 8y^8 + y^9 $  \\ 
10 & $y+44y^2+591y^3 + 2749y^4 + 3875y^5  + 1731y^6 + 424y^7 + 71y^8 + 9y^9 +y^{10}$  \\ 
11 & $y+54y^2+941y^3 + 6229y^4 + 14445y^5 + 10151y^6 + 3154y^7 + 621y^8 $  \\
& \hspace{290pt} $+ 89y^9+ 10y^{10} + y^{11}$\\ 
12 & $y + 65 y^2 + 1425 y^3 + 12730 y^4 + 44684 y^5 + 52776 y^6 + 
 22195 y^7 + 5285 y^8 $ \\
 &\hspace{240pt} $+ 870 y^9 + 109 y^{10} + 11 y^{11} + y^{12}   $\\
13 & $y + 77 y^2 + 2073 y^3 + 24022 y^4 + 119432 y^5 + 226116 y^6 + 144007 y^7 + 43133 y^8$\\
 & \hspace{185pt} $ + 8322 y^9  +1177 y^{10} + 131 y^{11} + 12 y^{12} + y^{13}$\\
14 & $y + 90 y^2 + 2918 y^3 + 42547 y^4 + 284922 y^5 + 807008 y^6 + 830095 y^7 + 331668 y^8   $  \\
  &  \hspace{130pt} $77027 y^9 + 12487 y^{10} + 1548 y^{11} + 155 y^{12} + 13 y^{13} + y^{14}  $ 
\end{tabular} 
\end{center}
\caption{The polynomials $U_{\Gamma_{2,2},n}(-y)$ for $\Gamma_{2,2} = \{1324,1423\}$} \label{tab:U1-2-}
\end{table}

We observe that the polynomials 
$U_{\Gamma_{2,2},n}(-y)$ in Table \ref{tab:U1-2-} are all log-concave. Here, a polynomial $P(y)=a_0+a_1y+\dots+a_ny^n$ is called \emph{log-concave} if $a_{i-1}a_{i+1}<a_i^2,$ for all $i=2,\dots, n-1$, and it is called \emph{unimodal} if there exists an index $k$ such that $a_i\le a_{i+1}$ for $1\leq i \leq k-1$ and $a_i\ge a_{i+1}$ for $k\leq i \leq n-1$. We conjecture that the polynomials $U_{\Gamma_{2,2}, n}(-y)$ are log-concave and, hence, unimodal for all $n$. 
We checked this holds for $n \leq 21$. 

One might 
hope to prove the unimodality of the polynomials $U_{\Gamma_{2,2}, n}(-y)$ 
by using the recursion 
\begin{equation}\label{unim}
U_{\Gamma_{2,2}, n}(-y) = (1+y)U_{\Gamma_{2,2}, n-1}(-y) +
(n-2)y U_{\Gamma_{2,2}, n2}(-y) + (n-2)y^2U_{\Gamma_{2,2}, n-3}(-y)
\end{equation}
and showing that for large enough $n$, the polynomials on the 
right hand side of (\ref{unim}) are all unimodal polynomials 
whose maximum coefficients occur at the same power of $y$.  There are 
two problems with this idea. First, assuming that 
$U_{\Gamma_{2,2}, n}(-y)$ is a unimodal polynomial whose maximum coefficient 
occurs that $y^j$, then we know that 
$(1+y)U_{\Gamma_{2,2}, n}(-y)$ is a unimodal polynomial. However, 
it could be that the maximum coefficient of 
$(1+y)U_{\Gamma_{2,2}, n}(-y)$ occurs at $y^j$ or at $y^{j+1}$. 
That is, if $P(y)$ is a unimodal 
polynomial whose maximum coefficient occurs at 
$y^k$, then $(1+y)P(y)$ could have its maximum coefficient occur 
at either $y^k$ or $y^{k+1}$. For example, 
$$
(1+y)(1+5y+2y^2) = 1+6y+7y^2+2y^3
$$
while 
$$
(1+y)(2+5y+y^2) =  2+7y+6y^2+y^3.
$$
Thus where the maximum coefficient of $(1+y)U_{\Gamma_{2,2}, n}(-y)$ occurs 
depends on the relative values of the coefficients on either side 
of the maximum coefficient of $U_{\Gamma_{2,2}, n}(-y)$.
For $n \leq 20$, the maximum coefficient of 
$(1+y)U_{\Gamma_{2,2}, n}(-y)$ occurs at the same power of $y$ 
where the maximum coefficient of $U_{\Gamma_{2,2}, n}(-y)$ occurs, 
but it is not obvious that this holds for all $n$. 

Second, it is not clear where to conjecture 
the maximum coefficients in the polynomials occur. That is, 
one might think from the table that for $n \geq 6$, the maximum 
coefficient in $U_{\Gamma_{2,2}, n}(-y)$ occurs at $y^{\Floor[n/2]+1}$,
but this does not hold up. For example, the maximum coefficient 
$U_{\Gamma_{2,2}, 18}(-y)$ occurs at $y^8$ and the maximum coefficient 
$U_{\Gamma_{2,2}, 19}(-y)$ occurs at $y^9$. Moreover,  
the maximum coefficient 
$U_{\Gamma_{2,2}, 26}(-y)$ occurs at $y^{12}$ and the maximum coefficient 
$U_{\Gamma_{2,2}, 27}(-y)$ occurs at $y^{12}$. Thus it is not 
clear how to use the recursion (\ref{unim}) to even prove the 
unimodality of the polynomials $U_{\Gamma_{2,2},n}(-y)$ much less 
prove that such polynomials are log concave.

When $k_1$ is larger than $k_2$, the polynomials $U_{\Gamma_{k_1,k_2},n}(-y)$ are not always unimodal. For example, consider the case where $k_1 = 6$ and $k_2 = 4$. Mathematica once again allows us to compute $U_{\Gamma_{6,4}, n}(-y)$ for $n = 10$ and $11.$ It is quite easy to see from Table \ref{tab:U64} that neither polynomial is unimodal.

\begin{table}[h]
\begin{center}
\begin{tabular}{c|l}
n & $U_{\Gamma_{6,4},n}(-y)$ \\ 
\hline
10 & $y+65y^2+36y^3 + 84y^4 +126y^5 + 126y^6 + 84y^7 + 36y^8 + 9y^9 +y^{10}$  \\ 
11 & $y+192y^2+227y^3 + 120y^4 + 210y^5  + 252y^6 + 210y^7+ 120y^8 + 45y^9 + 10y^{10} + y^{11}$  \\ 
\end{tabular} 
\end{center}
\caption{The polynomials $U_{\Gamma_{6,4},n}(-y)$} \label{tab:U64}
\end{table}

\subsection{Adding an identity permutation to $\Gamma_{k_1,k_2}$}

In this subsection, we want to consider the effect 
of adding an identity permutation to $\Gamma_{k_1,k_2}$. 
To simplify our analysis, we shall consider 
only the case where $k_1 =k_2$, but the same type 
of analysis can be carried out in general. Thus 
assume that  
$s \geq k_1 = k_2 \geq 2$ and let 
$\Gamma_{k_1,k_1,s} = \Gamma_{k_1,k_1} \cup \{12 \cdots s(s+1)\}$. 
Then we know that 
$$U_{\Gamma_{k_1,k_1,s},n}(y) = 
\sum_{O \in \mathcal{O}_{\Gamma_{k_1,k_1,s},n},\ I_{\Gamma_{k_1,k_1,s}}(O) =O} 
\sgn{O}W(O).$$

We want to classify the fixed points 
of $I_{\Gamma_{k_1,k_1,s}}$ by the size of the first brick. 
By Lemma \ref{lem:keyGamma}, it must be the case 
that the size of the first brick is less than or equal 
to $s$. We let 
$U^{(r)}_{\Gamma_{k_1,k_1,s},n}(y)$ denote the sum of 
$\sgn{O}W(O)$ over all fixed points of $I_{\Gamma_{k_1,k_1,s}}$ 
whose first brick is of size $r$.  
Thus,
\begin{equation}\label{eq:k1s0}
U_{\Gamma_{k_1,k_1,s},n}(y)= 
\sum_{r=1}^s U^{(r)}_{\Gamma_{k_1,k_1,s},n}(y).
\end{equation}
Now let 
$O=(B,\sg)$ be a fixed point of $I_{\Gamma_{k_1,k_1,s}}$ 
where $B=(b_1, \ldots, b_k)$ and $\sg =\sg_1 \cdots \sg_n$. 
By our arguments above, if $b_1 < k_1$, then the elements 
in the first brick of $(B,\sg)$ are $1, \ldots ,b_1$ so that  
for $1 \leq r < k_1$, 
\begin{equation}\label{eq:k1s1}
U^{(r)}_{\Gamma_{k_1,k_1,s},n}(y) = 
-y U_{\Gamma_{k_1,k_1,s},n-r}(y).
\end{equation}
Let 
$$U^{(\geq k_1)}_{\Gamma_{k_1,k_1,s},n}(y) =
\sum_{r=k_1}^s U^{(r)}_{\Gamma_{k_1,k_1,s},n}(y)$$ 
be the sum of $\sgn{O}W(O)$ over all fixed points 
of $I_{\Gamma_{k_1,k_1,s}}$ whose first brick has 
size greater than or equal to $k_1$.  Clearly, 
\begin{eqnarray*}
U_{\Gamma_{k_1,k_1,s},n}(y)&=& U^{(\geq k_1)}_{\Gamma_{k_1,k_1,s},n}(y) + 
\sum_{r=1}^{k_1-1} U^{(r)}_{\Gamma_{k_1,k_1,s},n}(y) \\
&=& U^{(\geq k_1)}_{\Gamma_{k_1,k_1,s},n}(y) + 
\sum_{r=1}^{k_1-1} (-y)U_{\Gamma_{k_1,k_1,s},n-r}(y)
\end{eqnarray*}
so that 
\begin{equation}\label{eq:k1s2}
U^{(\geq k_1)}_{\Gamma_{k_1,k_1,s},n}(y) = 
U_{\Gamma_{k_1,k_1,s},n}(y) + 
\sum_{r=1}^{k_1-1} y U_{\Gamma_{k_1,k_1,s},n-r}(y).
\end{equation}

Now suppose that $r > k_1$. Then we claim 
that $\sg_i =i$ for $i =1, \ldots, r-k_1+1$.  That is, 
we know that $\sg_1 =1$ so that if it is not the case that $\sg_i =i$ for $i =1, \ldots, r-k_1+1,$ there must be a least $i \leq r-k_1+1$ which is not 
in the first brick of $(B,\sg)$.  Since there are no 
descents of $\sg$ within bricks and the minimal elements 
of the bricks of $(B,\sg)$ are increasing, reading from 
left to right,  it 
must be that $i$ is the first element of brick $b_2$ and 
there is a decrease between bricks $b_1$ and $b_2$.  Thus 
there is a $\Gamma_{k_1,k_1,s}$-match that lies in 
the cells of $b_1$ and $b_2$ and the only place that 
such a match can start is at cell $r-k_1+1$.  But this is impossible  
since we would have $\sg_{r-k_1+1} > i$ which is incompatible 
with having a  $\Gamma_{k_1,k_1,s}$-match starting at cell $r-k_1+1$. 
It follows that we can remove the first $r-k_1$ elements from  
$(B,\sg)$ and reduce the remaining elements by $r-k_1$ to  
produce a fixed point of $I_{\Gamma_{k_1,k_1,s}}$ of size 
$n-(r-k_1)$ whose first brick has size $k_1$. Vice versa, if we start with a fixed point 
$ (D,\tau)$ of $I_{\Gamma_{k_1,k_1,s}}$ of size $n-(r-k_1)$ where 
$D=(d_1, \ldots, d_k)$, $\tau = \tau_1 \cdots \tau_{n-(r-k_1)}$, and 
$d_1 =k_1$,  
then if we add $1, \ldots, r-k_1$ to the first brick and raise 
the remaining numbers by $r-k_1$, we will produce a fixed 
point of $I_{\Gamma_{k_1,k_1,s}}$ whose first brick is of size $r$. 
It follows that for $k_1 < r \leq s$, 
\begin{equation}\label{eq:k1s3}
U^{(r)}_{\Gamma_{k_1,k_1,s},n}(y) = 
U^{(k_1)}_{\Gamma_{k_1,k_1,s},n-(r-k_1)}(y).
\end{equation}
Thus 
\begin{equation}\label{eq:k1s4}
U^{(\geq k_1)}_{\Gamma_{k_1,k_1,s},n}(y) =
\sum_{p=0}^{s-k_1} U^{(k_1)}_{\Gamma_{k_1,k_1,s},n-p}(y).
\end{equation}

Finally consider 
$U^{(k_1)}_{\Gamma_{k_1,k_1,s},n}(y)$. Let $(B,\sg)$ be 
a fixed point of $I_{\Gamma_{k_1,k_1,s}}$ where 
$B=(b_1, \ldots, b_k)$, $b_1 =k_1$, and $\sg = \sg_1 \cdots \sg_n$. 
We then 
have two cases.\\
\ \\
{\bf Case 1.}  2 is in brick $b_1$. \\

In this case, we claim that 
the first brick must contain the elements $1, \ldots, k_1$. That is, 
in such a situation 1 cannot be involved in a $\Gamma_{k_1,k_1,s}$-match 
in $\sg$ which means that there is not enough room for a $\Gamma_{k_1,k_1,s}$-match that involves any elements from 
the first brick. Thus as before, we can remove the first 
brick from $(B,\sg)$ and subtract $k_1$ from the remaining elements 
of $\sg$ to produce a fixed point $(D,\tau)$ of 
$I_{\Gamma_{k_1,k_1,s}}$ of size $n-k_1$.  Such fixed points 
contribute   $(-y)U_{\Gamma_{k_1,k_1,s},n-k_1}(y)$ to 
$U^{(k_1)}_{\Gamma_{k_1,k_1,s},n}(y)$.\\ 
\ \\
{\bf Case 2.} 2 is in brick $b_2$. \\

In this case, we can argue as above that $2$ be the first 
cell of the second brick $b_2$ and $b_2$ starts at cell $k_1+1$. 
Then we have $\binom{n-2}{k_1-1}$ ways to choose the remaining 
elements in the first brick and if we remove the first brick 
and adjust the remaining elements, we will produce a fixed 
point $(D,\tau)$ of $I_{\Gamma_{k_1,k_1,s}}$ of size $n-k_1$  
whose first brick is of size greater than or equal to $k_1$. 
Such fixed points 
contribute   
$(-y)\binom{n-2}{k_1-1}U^{(\geq k_1)}_{\Gamma_{k_1,k_1,s},n-k_1}(y)$ to 
$U^{(k_1)}_{\Gamma_{k_1,k_1,s},n}(y)$.
 
It follows that 
\begin{eqnarray}\label{eq:k1s5}
U^{(k_1)}_{\Gamma_{k_1,k_1,s},n}(y) &=& 
-yU_{\Gamma_{k_1,k_1,s},n-k_1}(y)-
y\binom{n-2}{k_1-1}U^{(\geq k_1)}_{\Gamma_{k_1,k_1,s},n-k_1}(y)  \nonumber \\
&=& -yU_{\Gamma_{k_1,k_1,s},n-k_1}(y)-  \nonumber \\
&&y\binom{n-2}{k_1-1}\left(U_{\Gamma_{k_1,k_1,s},n-k_1}(y)+y\sum_{r=1}^{k_1-1}
U_{\Gamma_{k_1,k_1,s},n-k_1-r}(y)\right).
\end{eqnarray}

Putting equations (\ref{eq:k1s0}), 
(\ref{eq:k1s1}), (\ref{eq:k1s2}), (\ref{eq:k1s3}), 
(\ref{eq:k1s4}), and (\ref{eq:k1s5}) together, 
we see that 
\begin{align*}
& U_{\Gamma_{k_1,k_1,s},n}(y)  \\
& = -y \sum_{r=1}^{k_1-1} U_{\Gamma_{k_1,k_1,s},n-r}(y) + \sum_{p=0}^{s-k_1} U^{(k_1)}_{\Gamma_{k_1,k_1,s},n-p}(y)  \\
& = -y \sum_{r=1}^{k_1-1} U_{\Gamma_{k_1,k_1,s},n-r}(y) - y \sum_{p=0}^{s-k_1} U_{\Gamma_{k_1,k_1,s},n-p-k_1}(y) \\
&\hspace{100 pt} +\binom{n-p-2}{k_1-1} \left(U_{\Gamma_{k_1,k_1,s},n-p-k_1}(y)+y 
\sum_{a=1}^{k_1-1} U_{\Gamma_{k_1,k_1,s},n-p-k_1-a}(y)\right)\\
& = -y \sum_{r=1}^{k_1-1} U_{\Gamma_{k_1,k_1,s},n-r}(y) - y \Bigg(\sum_{p=0}^{s-k_1}\left(1 + \binom{n-p-2}{k_1-1}\right)U_{\Gamma_{k_1,k_1,s},n-p-k_1}(y)\\
&\hspace{210 pt}  +y \binom{n-p-2}{k_1-1} \sum_{a=1}^{k_1-1} U_{\Gamma_{k_1,k_1,s},n-p-k_1-a}(y)\Bigg).\end{align*}

Thus we have the following theorem. 

\begin{theorem} Let $\Gamma_{k_1,k_1,s} = 
\Gamma_{k_1,k_1} \cup \{12 \cdots s(s+1)\}$ where $s \geq k_1$. 
Then $U_{\Gamma_{k_1,k_1,s},1}(y)=-y$ and for $n \geq 2$, 
\begin{multline*}
U_{\Gamma_{k_1,k_1,s},n}(y) =-y \sum_{r=1}^{k_1-1} U_{\Gamma_{k_1,k_1,s},n-r}(y)-y \Bigg(\sum_{p=0}^{s-k_1}\left(1 + \binom{n-p-2}{k_1-1}\right)U_{\Gamma_{k_1,k_1,s},n-p-k_1}(y) \\
+y \binom{n-p-2}{k_1-1} \sum_{a=1}^{k_1-1} U_{\Gamma_{k_1,k_1,s},n-p-k_1-a}(y)\Bigg).
\end{multline*}
\end{theorem}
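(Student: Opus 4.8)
The plan is to apply Lemma \ref{lem:keyGamma} with $\Gamma = \Gamma_{k_1,k_1,s}$ and to split the resulting signed sum over fixed points of $I_\Gamma$ according to the size of the first brick. Since $12\cdots s(s+1) \in \Gamma_{k_1,k_1,s}$, property (4) of Lemma \ref{lem:keyGamma} forces every brick of a fixed point to have size at most $s$, so if $U^{(r)}_{\Gamma_{k_1,k_1,s},n}(y)$ denotes the sum of $\sgn{O}W(O)$ over fixed points $O = (B,\sg)$ whose first brick has size $r$, then $U_{\Gamma_{k_1,k_1,s},n}(y) = \sum_{r=1}^{s} U^{(r)}_{\Gamma_{k_1,k_1,s},n}(y)$ by \eqref{eq:KEY}, which is \eqref{eq:k1s0}. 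The base case $U_{\Gamma_{k_1,k_1,s},1}(y) = -y$ is immediate, since the unique fixed point of size $1$ is the single cell filled with $1$, carrying sign $-1$ and weight $y$.

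The two extreme regimes are handled just as in the proof of Theorem \ref{thm:1-2-gen}. For $1 \le r < k_1$: because every non-identity pattern in $\Gamma_{k_1,k_1,s}$ has its unique descent at position $k_1$ and the identity pattern, being increasing, cannot realize a descent, a decrease between the first two bricks would force a $\Gamma$-match beginning at cell $r-k_1+1 < 1$, which is impossible; hence the first brick must be filled with $1,\ldots,r$, and peeling it off gives $U^{(r)}_{\Gamma_{k_1,k_1,s},n}(y) = -y\,U_{\Gamma_{k_1,k_1,s},n-r}(y)$, equation \eqref{eq:k1s1}. For $k_1 < r \le s$: one shows that in such a fixed point $\sg_i = i$ for $i = 1,\ldots,r-k_1+1$, since otherwise the least index not lying in the first brick would start the second brick, create a decrease, and force a $\Gamma$-match beginning at cell $r-k_1+1$, contradicting the fact that the value there exceeds the small value displaced into $b_2$; so removing $1,\ldots,r-k_1$ and reducing sets up a sign- and weight-preserving bijection onto fixed points of size $n-(r-k_1)$ with first brick of size exactly $k_1$. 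This yields \eqref{eq:k1s3} and, upon summing over $r$, the formula \eqref{eq:k1s4} for $U^{(\ge k_1)}_{\Gamma_{k_1,k_1,s},n}(y)$, while \eqref{eq:k1s2} re-expresses $U^{(\ge k_1)}_{\Gamma_{k_1,k_1,s},n}(y)$ in terms of the quantities $U_{\Gamma_{k_1,k_1,s},m}(y)$.

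The crux is the case $r = k_1$, which I would split according to whether $2$ lies in $b_1$ or in $b_2$. If $2 \in b_1$ then $1$ participates in no $\Gamma$-match, leaving no room for a match using any cell of $b_1$, so the first brick is exactly $1,\ldots,k_1$ and deleting it contributes $-y\,U_{\Gamma_{k_1,k_1,s},n-k_1}(y)$. If $2 \in b_2$ then, exactly as in Subcase 2 of the proof of Theorem \ref{thm:1-2-gen}, $b_2$ starts at cell $k_1+1$, there are $\binom{n-2}{k_1-1}$ choices for the remaining entries of $b_1$, and deleting $b_1$ yields a fixed point of size $n-k_1$ whose first brick has size at least $k_1$, contributing $-y\binom{n-2}{k_1-1}U^{(\ge k_1)}_{\Gamma_{k_1,k_1,s},n-k_1}(y)$. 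Adding these gives \eqref{eq:k1s5}; substituting \eqref{eq:k1s2} for the $U^{(\ge k_1)}$ term and then assembling $U_{\Gamma_{k_1,k_1,s},n}(y) = \sum_{r=1}^{k_1-1}(-y)U_{\Gamma_{k_1,k_1,s},n-r}(y) + \sum_{p=0}^{s-k_1}U^{(k_1)}_{\Gamma_{k_1,k_1,s},n-p}(y)$ via \eqref{eq:k1s0}, \eqref{eq:k1s1}, and \eqref{eq:k1s4} produces the claimed recursion. I expect the main difficulty to be bookkeeping rather than conceptual: one must carry the binomial coefficient $\binom{n-2}{k_1-1}$ correctly through the shift $n \mapsto n-p$ so that it becomes $\binom{n-p-2}{k_1-1}$, and one must verify at each peeling step that the constructed map genuinely lands among fixed points of $I_{\Gamma_{k_1,k_1,s}}$ — that it creates no $y$-labeled cell, no $\Gamma$-match interior to a single brick, and no brick of size exceeding $s$ — and that the corresponding inverse construction is well defined.
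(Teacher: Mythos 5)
Your proposal is correct and follows essentially the same route as the paper: decompose $U_{\Gamma_{k_1,k_1,s},n}(y)$ by the size $r$ of the first brick, peel off bricks with $r<k_1$ directly, reduce the case $k_1<r\leq s$ to the case $r=k_1$ by showing the initial entries are forced, and split the $r=k_1$ case according to whether $2$ lies in $b_1$ or $b_2$, exactly reproducing equations \eqref{eq:k1s0}--\eqref{eq:k1s5}. The bookkeeping you flag (the shift producing $\binom{n-p-2}{k_1-1}$) is handled the same way in the paper, so no further changes are needed.
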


For example, if $k_1 =2$, then 
\begin{align*}
U_{\Gamma_{2,2,s},n}(y) &= -y U_{\Gamma_{2,2,s},n-1}(y) \nonumber\\
& -y\left(\sum_{p=0}^{s-2} (n-p-1)  U_{\Gamma_{2,2,s},n-2-p}(y)+(n-p-2)yU_{\Gamma_{2,2,s},n-3-p}(y)\right).
\end{align*}

We shall further explore two special cases, namely,  
$k_1 =k_2 =s =2$ where the recursion 
becomes 
\begin{equation}\label{222rec}
U_{\Gamma_{2,2,2},n}(y) = -y U_{\Gamma_{2,2,2},n-1}(y) -
y(n-1)U_{\Gamma_{2,2,2},n-2}(y)  -y^2 (n-2)U_{\Gamma_{2,2,2},n-3}(y)
\end{equation} 
for $n > 1,$ and $k_1 =k_2 =2,s=3$ where the recursion 
becomes 
\begin{align}\label{223rec}
U_{\Gamma_{2,2,3},n}(y) = &-y U_{\Gamma_{2,2,3},n-1}(y) -
y(n-1)U_{\Gamma_{2,2,3},n-2}(y)  -y^2 (n-2)U_{\Gamma_{2,2,3},n-3}(y)- 
\nonumber \\
&y(n-2)U_{\Gamma_{2,2,3},n-3}(y)  -y^2 (n-3)U_{\Gamma_{2,2,3},n-4}(y).
\end{align} 

Tables \ref{tab:U1324_1423_123even} and \ref{tab:U1324_1423_123odd} below give the polynomials $U_{\Gamma_{2,2,2}, n}(-y)$ for even and odd values of $n,$ respectively.
 
\begin{table}[h]
\begin{center}
\begin{tabular}{c|c|l}
$k$ & $n$ & $U_{\Gamma_{2,2,2},2k}(-y)$ \\ 
\hline 1 & 2 & $y + y^2$\\
2 & 4 & $3y^2 + 3y^3 + y^4$ \\
3 & 6 & $15y^3 + 15y^4 + 5y^5 + y^6$\\
4 & 8 & $105y^4 + 105y^5 + 35y^6 + 7y^7 + y^8$\\
5 & 10 & $945y^5 + 945y^6 + 315y^7 + 63y^8 + 9y^9 + y^{10}$\\
6 & 12 & $10395y^6 + 10395y^7 + 3465y^8 + 693y^9 + 99y^{10}+ 11y^{11}+ y^{12}$ \\ 
7 & 14 & $ 135135y^7 + 135135y^8 + 45045y^9 +  9009y^{10} + 1287y^{11} +143y^{12} + 13y^{13} + y^{14}$ \\
\end{tabular} 
\caption{The polynomials $U_{\Gamma_{2,2,2},2k}(-y)$ for $\Gamma_{2,2,2} = \{1324,1423,123\}$} \label{tab:U1324_1423_123even}
\end{center}
\end{table}

\begin{table}[h]
\begin{center}
\begin{tabular}{c|c|l}
$k$ & $n$ & $U_{\Gamma_{2,2,2},2k+1}(-y)$ \\ 
\hline 1 & 3 & $2y^2+y^3$\\
2 & 5 & $8y^3 + 4y^4 + y^5$ \\
3 & 7 & $48y^4 + 24y^5 + 6y^6 + y^7$\\
4 & 9 & $384y^5 + 192y^6 + 48y^7 + 8y^8 + y^9$\\
5 & 11 & $3840y^6 + 1920y^7 + 480y^8 + 80y^9 + 10y^{10} + y^{11}$\\
6 & 13 & $46080y^7 + 23040^8 + 5760^9 + 960y^{10} + 120y^{11}+ 12y^{12}+ y^{13}$ \\ 
7 & 15 & $645120y^8 + 322560y^9 + 80640y^{10} +  13440y^{11} + 1680y^{12} +168y^{13} + 14y^{14} + y^{15}$ \\
\end{tabular} 
\caption{The polynomials $U_{\Gamma_{2,2,2},2k+1}(-y)$ for $\Gamma_{2,2,2} = \{1324,1423,123\}$} \label{tab:U1324_1423_123odd}
\end{center}
\end{table}
 
This data leads us to conjecture the following explicit 
formulas: 
\begin{align} \displaystyle
U_{\Gamma_{2,2,2}, 2k}(-y) = \sum_{i=0}^{k} (2k-1)\downarrow\downarrow_{k-i}y^{k+i} \label{eq:U_even} \\
U_{\Gamma_{2,2,2}, 2k+1}(-y) = \sum_{i=0}^{k} (2k)\downarrow\downarrow_{k-i}y^{k+1+i} \label{eq:U_odd}
\end{align} where $(x)\downarrow \downarrow_{0} =1$ and 
 $(x)\downarrow \downarrow_{k} =x(x-2)(x-4) \cdots (x-2k -2)$ for $k \geq 1$. 

These formulas can be proved by induction. Note that it follows 
from (\ref{222rec}) that for $n > 1$, 
\begin{align}\label{eq:U1324_1423_123}
U_{\Gamma_{2,2,2},n}(-y) = yU_{\Gamma_{2,2,2},n-1}(-y)+ y(n-1)U_{\Gamma_{2,2,2},n-2}(-y) -  y^2(n-2)U_{\Gamma_{2,2,2},n-3}(-y). 
\end{align}

One can directly check these formulas for $n \leq 3$. For $n > 3$, 
let $U_{\Gamma_{2,2,2},n}(-y)|_{y^{k}}$ be the coefficient of $y^{k}$ in $U_{\Gamma_{2,2,2},n}(-y).$ Equation (\ref{eq:U1324_1423_123}) allows us to write the coefficient of $y^{k+1+i},$ for $0 \leq i \leq k,$ in $U_{\Gamma_{2,2,2}, 2k+1}(-y)$ as  
\begin{align*}
U_{\Gamma_{2,2,2},2k+1}(-y)|_{y^{k+1+i}} &=  U_{\Gamma_{2,2,2},2k}(-y)|_{y^{k+i}} +  (2k)U_{\Gamma_{2,2,2},2k-1}(-y)|_{y^{k+i}} \\
&\quad - (2k-1)  U_{\Gamma_{2,2,2},2k-2}(-y)|_{y^{k+i-1}}\\
&= (2k-1)\downarrow\downarrow_{k-i} + (2k)(2k-2)\downarrow\downarrow_{k-i} - (2k-1)\cdot(2k-3)\downarrow\downarrow_{k-i} \\
& = (2k)\downarrow\downarrow_{k-i}.
\end{align*}

For the even case when $n=2k,$ the coefficient of $y^{k+i},$ for $0 \leq i \leq k,$ in $U_{\Gamma_{2,2,2}, 2k}(-y)$ is   
\begin{align*}
U_{\Gamma_{2,2,2},2k}(-y)|_{y^{k+i}} &=  U_{\Gamma_{2,2,2},2k-1}(-y)|_{y^{k+i-1}} +  (2k-1)U_{\Gamma_{2,2,2},2k-2}(-y)|_{y^{k+i-1}} \\
&\quad - (2k-2)  U_{\Gamma_{2,2,2},2k-3}(-y)|_{y^{k+i-2}}\\
&= (2k-2)\downarrow\downarrow_{k-i} + (2k-1)(2k-3)\downarrow\downarrow_{k-i} - (2k-2)\cdot(2k-4)\downarrow\downarrow_{k-i} \\
& = (2k-1)\downarrow\downarrow_{k-i}.
\end{align*}
This proves equations (\ref{eq:U_even}) and (\ref{eq:U_odd}).

Hence, we can give a closed formula for 
$\mbox{NM}_{\Gamma_{2,2,2}}(t,x,y)$. That is, we have the following 
theorem.
\begin{theorem}
\begin{multline*}
\mbox{NM}_{\Gamma_{2,2,2}}(t,x,y) = \\
\left(\frac{1}{1+ \left(\sum_{n \geq 1}\frac{t^n}{n!} \sum_{i=0}^{k} (2k-1)\downarrow\downarrow_{k-i}y^{k+i}\right) + \left(\sum_{n \geq 0} \frac{t^n}{n!} \sum_{i=0}^{k} (2k)\downarrow\downarrow_{k-i}y^{k+1+i}\right)}\right)^x.
\end{multline*}
\end{theorem}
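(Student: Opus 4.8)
The plan is to obtain this theorem as an immediate corollary of two facts already in hand: the Jones--Remmel product formula $\mbox{NM}_\Gamma(t,x,y)=\left(1/U_\Gamma(t,y)\right)^x$, recalled in the introduction and valid here because every permutation in $\Gamma_{2,2,2}=\{1324,1423,123\}$ starts with $1$; and the explicit evaluations (\ref{eq:U_even}) and (\ref{eq:U_odd}) of $U_{\Gamma_{2,2,2},n}(-y)$, which were just proved by induction from the recursion (\ref{eq:U1324_1423_123}) (equivalently (\ref{222rec})). Since all of the analytic content has already been carried out, what remains is purely a matter of assembling $U_{\Gamma_{2,2,2}}(t,y)$ from its coefficients, splitting the series according to the parity of the power of $t$, and inverting.

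Concretely, I would first write
$$U_{\Gamma_{2,2,2}}(t,y)=1+\sum_{n\geq1}U_{\Gamma_{2,2,2},n}(y)\frac{t^n}{n!}=1+\sum_{k\geq1}U_{\Gamma_{2,2,2},2k}(y)\frac{t^{2k}}{(2k)!}+\sum_{k\geq0}U_{\Gamma_{2,2,2},2k+1}(y)\frac{t^{2k+1}}{(2k+1)!},$$
separating the even-index and odd-index coefficients. Next I would substitute $y\mapsto -y$ in (\ref{eq:U_even}) and (\ref{eq:U_odd}) to pass from $U_{\Gamma_{2,2,2},n}(-y)$ to $U_{\Gamma_{2,2,2},n}(y)$, giving $U_{\Gamma_{2,2,2},2k}(y)=\sum_{i=0}^{k}(2k-1)\downarrow\downarrow_{k-i}(-y)^{k+i}$ and $U_{\Gamma_{2,2,2},2k+1}(y)=\sum_{i=0}^{k}(2k)\downarrow\downarrow_{k-i}(-y)^{k+1+i}$. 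Plugging these into the displayed expansion of $U_{\Gamma_{2,2,2}}(t,y)$, and then into $\left(1/U_{\Gamma_{2,2,2}}(t,y)\right)^x$, produces the asserted closed form. As a consistency check I would verify the bottom of the range: the $k=0$ term of the odd sum contributes $(0)\downarrow\downarrow_{0}(-y)^{1}=-y$, which agrees with the base case $U_{\Gamma_{2,2,2},1}(y)=-y$, and the $n=2,3$ terms can be read off directly against Tables \ref{tab:U1324_1423_123even} and \ref{tab:U1324_1423_123odd}.

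There is no real obstacle here; the theorem is a repackaging of (\ref{eq:U_even})--(\ref{eq:U_odd}) together with reciprocity. The only points demanding care are the reindexing ($n=2k$ in the even sum, $n=2k+1$ in the odd sum, with the convention $n=2k$ meaning the coefficient is indexed by $k$) and the sign change $y\mapsto -y$: the closed forms were established for $U_{\Gamma_{2,2,2},n}(-y)$, whereas the product formula needs $U_{\Gamma_{2,2,2},n}(y)$, so one must carry along the factors $(-1)^{k+i}$ and $(-1)^{k+1+i}$ correctly. Keeping that bookkeeping straight is the one place an error could creep in, and it should be stated explicitly so that the powers of $y$ appearing in the final denominator are unambiguous.
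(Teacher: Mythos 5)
Your proposal is correct and takes essentially the same route as the paper: the paper states this theorem with no further argument, as an immediate consequence of the reciprocity formula $\mbox{NM}_\Gamma(t,x,y)=(1/U_\Gamma(t,y))^x$ together with the closed forms (\ref{eq:U_even})--(\ref{eq:U_odd}) just proved. Your insistence on tracking the substitution $y\mapsto -y$ is well placed, since the theorem as printed appears to contain a sign/indexing slip (the inner sums as written equal $U_{\Gamma_{2,2,2},n}(-y)$ rather than $U_{\Gamma_{2,2,2},n}(y)$, and the dependence of $k$ on $n$ is left implicit), exactly the bookkeeping point you flag.
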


It follows from (\ref{223rec}) that

\begin{align*}
U_{\Gamma_{2,2,3},n}(-y) & = yU_{\Gamma_{2,2,3},n-1}(-y) + y(n-1)U_{\Gamma_{2,2,3},n-2}(-y) + y(n-2)U_{\Gamma,n-3}(-y) \nonumber \\ 
& \qquad \qquad  -y^2(n-2)U_{\Gamma_{2,2,3},n-3}(-y) -
y^2(n-3)U_{\Gamma_{2,2,3},n-4}(-y). 
\end{align*}

The next three tables below give the polynomials $U_{\Gamma_{2,2,3}, n}(y)$ for $n = 3k,n=3k+1,$ and $n=3k+2,$ respectively.

\begin{table}[h]
\begin{center}
\begin{tabular}{c|c|l}
$k$ & $n$ & $U_{\Gamma_{2,2,3},3k}(-y)$ \\ 
\hline 1 & 3 & $y + 2y^2 + y^3$\\
2 & 6 & $4y^2 + 33y^3 + 19y^4 + 5y^5 + y^6$ \\
3 & 9 & $28y^3 + 767y^4 + 781y^5 + 267y^6 + 55y^7 + 8y^8 + y^9$\\
4 & 12 & $280y^4 + 20496y^5 + 44341y^6 + 20765y^7 + 5137y^8 + 861y^9 $\\
 & & \hspace{200pt} $+ 109y^{10} + 11y^{11} + y^{12}$\\
5 & 15 & $3640y^5 + 598892y^6 + 2825491y^7 + 2072739y^8 + 641551y^9 + 125111y^{10} $\\
& & \hspace{160pt} $ + 17755y^{11} + 1977y^{12} 181y^{13} + 14y^{14} + y^{15}  $
\end{tabular} 
\caption{The polynomials $U_{\Gamma_{2,2,3},3k}(-y)$ for $\Gamma_{2,2,3} = \{1324,1423,1234\}$} \label{tab:U1234_3k}
\end{center}
\end{table}

\begin{table}[h]
\begin{center}
\begin{tabular}{c|c|l}
$k$ & $n$ & $U_{\Gamma_{2,2,3},3k+1}(-y)$ \\ 
\hline 1 & 4 & $5y^2 + 3y^3 + y^4$\\
2 & 7 & $67y^3 + 81y^4 + 29y^5 + 6y^6 + y^7 $ \\
3 & 10 & $1166y^4 + 3321y^5 + 1645y^6 +417y^7 + 71y^8 + 9y^9 + y^{10}$\\
4 & 13 & $23746y^5 + 160647y^6 + 128771y^7 + 41055y^8 + 8137y^9 + 1167y^{10}$\\
& & \hspace{220pt} $ + 131y^{11} + 12y^{12} + y^{13}$\\
5 & 16 & $550844y^6 + 8107518y^7 + 12109429y^8 + 5170965y^9 + 1225973y^{10}  $\\
& & \hspace{100pt} $+ 200253y^{11} + 24889y^{12} + 2493y^{13}+ 209y^{14} + 15y^{15} + y^{16}  $
\end{tabular} 
\caption{The polynomials $U_{\Gamma_{2,2,3},3k+1}(-y)$ for $\Gamma_{2,2,3} = \{1324,1423,1234\}$} \label{tab:U1234_3k+1}
\end{center}
\end{table}

\begin{table}[h]
\begin{center}
\begin{tabular}{c|c|l}
$k$ & $n$ & $U_{\Gamma_{2,2,3},3k+2}(-y)$ \\ 
\hline 1 & 5 & $7y^2 + 11y^3 + 4y^4 + y^5 $ \\
2 & 8 & $70y^3 + 297y^4 + 157y^5 + 41y^6 + 7y^7 + y^8$\\
3 & 11 & $910y^4 + 10343y^5 + 9223y^6 + 3069y^7 + 613y^8 + 89y^9 + 10y^{10} + y^{11} $\\
4 & 14 & $14560y^5 + 390564y^6 + 687109y^7 + 306413y^8 + 74137y^9 + 12261y^{10} + 1537y^{11}$\\
& & \qquad \qquad \qquad \qquad   $+ 155y^{12} + 13y^{13} +  y^{14}$
\end{tabular} 
\caption{The polynomials $U_{\Gamma_{2,2,3},3k+2}(-y)$ for $\Gamma_{2,2,3} = \{1324,1423,1234\}$} \label{tab:U1234_3k+2}
\end{center}
\end{table}

For any $s \geq 3$, it is easy to see that the lowest power of $y$ that 
occurs in $U_{\Gamma_{2,2,s},n}(-y)$ corresponds to 
brick tabloids where we use the minimum number of 
bricks. Since the maximum size of brick in 
a fixed point of $I_{\Gamma_{2,2,s}}$ is $s$, we see 
that the minimum number of bricks that we can 
use for a fixed point of  $I_{\Gamma_{2,2,s}}$ of 
length $sn$ is $n$ while the minimum number of bricks that we can 
use for a fixed point of  $I_{\Gamma_{2,2,s}}$ of 
length $sn+j$ for $1 \leq j \leq s-1$  is $n+1$. 
We can prove the following general theorem for 
the coefficients of the lowest power of $y$ that 
appears in $U_{\Gamma_{2,2,s},n}(-y)$.  
\begin{theorem}\label{thm:sn}
For $n \geq 1$,
\begin{equation}\label{eq:sn}
U_{\Gamma_{2,2,s},sn}(-y)|_{y^n} = \prod_{i=1}^n ((i-1)s+1)
\end{equation}
and 
\begin{equation}\label{eq:sn+s-1}
U_{\Gamma_{2,2,s},sn+s-1}(-y)|_{y^{n+1}} = \prod_{i=1}^n ((i+1)s+1).
\end{equation}
\end{theorem}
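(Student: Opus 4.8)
The plan is to prove the two identities by induction, first establishing (\ref{eq:sn}) and then using it to get (\ref{eq:sn+s-1}), working entirely from the recursion for $U_{\Gamma_{2,2,s},n}(-y)$ displayed just above for $k_1=2$. Substituting $y\mapsto -y$ there gives, for $n\geq 2$,
\begin{equation*}
U_{\Gamma_{2,2,s},n}(-y) = y\,U_{\Gamma_{2,2,s},n-1}(-y) + y\sum_{p=0}^{s-2}(n-p-1)\,U_{\Gamma_{2,2,s},n-2-p}(-y) - y^2\sum_{p=0}^{s-2}(n-p-2)\,U_{\Gamma_{2,2,s},n-3-p}(-y).
\end{equation*}
By the discussion preceding the theorem (itself a consequence of Lemma \ref{lem:keyGamma} together with the fact that every brick in a fixed point of $I_{\Gamma_{2,2,s}}$ has size at most $s$), the lowest power of $y$ appearing in $U_{\Gamma_{2,2,s},m}(-y)$ is $y^{\lceil m/s\rceil}$, and its coefficient counts the fixed points of $I_{\Gamma_{2,2,s}}$ of length $m$ that use exactly $\lceil m/s\rceil$ bricks. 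Write $a_m$ for the coefficient of $y^m$ in $U_{\Gamma_{2,2,s},sm}(-y)$ and $b_m$ for the coefficient of $y^{m+1}$ in $U_{\Gamma_{2,2,s},sm+s-1}(-y)$; these are exactly the quantities in (\ref{eq:sn}) and (\ref{eq:sn+s-1}).

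First I would establish (\ref{eq:sn}). The base case is $a_1=1$: the unique length-$s$ fixed point with a single brick has that brick reading $12\cdots s$. For $m\geq 2$, apply the recursion with $n=sm$ and read off the coefficient of $y^m$. Since $\lceil sm/s\rceil=m$, one checks that the term $y\,U_{\cdots,sm-1}(-y)$ and every term $y(n-p-1)U_{\cdots,sm-2-p}(-y)$ with $0\leq p\leq s-3$ contribute only in degrees $\geq m+1$, that every term of the $y^2$-sum contributes only in degrees $\geq m+1$, and that the sole surviving term is the $p=s-2$ summand $y\,(s(m-1)+1)\,U_{\cdots,s(m-1)}(-y)$. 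This gives $a_m=(s(m-1)+1)\,a_{m-1}$, hence $a_m=\prod_{i=1}^m(s(i-1)+1)$, which is (\ref{eq:sn}).

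Next I would prove (\ref{eq:sn+s-1}) by induction on $m$, taking (\ref{eq:sn}) as known. The base case $b_0=1$ is again immediate (the brick reading $12\cdots(s-1)$). For $m\geq 1$, apply the recursion with $n=sm+s-1$, so $\lceil n/s\rceil=m+1$, and extract the coefficient of $y^{m+1}$. A bookkeeping of the ceilings $\lceil (n-1)/s\rceil$, $\lceil (n-2-p)/s\rceil$, $\lceil (n-3-p)/s\rceil$ shows that the only terms reaching degree $m+1$ are the $p=s-3$ summand of the first sum, whose shifted argument has length $sm$ and which contributes $(sm+1)\,a_m$, and the $p=s-2$ summand, whose shifted argument has length $sm-1=s(m-1)+s-1$ and which contributes $sm\,b_{m-1}$; every term of the $y^2$-sum lands in degree $\geq m+2$. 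Thus $b_m=(sm+1)\,a_m+sm\,b_{m-1}$. Substituting the formula (\ref{eq:sn}) for $a_m$ and the inductive hypothesis for $b_{m-1}$, and using the factorisation $\prod_{i=1}^m(s(i+1)+1)=(s(m+1)+1)\prod_{i=1}^{m-1}(s(i+1)+1)$, the desired equality $b_m=\prod_{i=1}^m(s(i+1)+1)$ reduces to the elementary identity $(sm+1)\prod_{i=1}^m(s(i-1)+1)=(s+1)\prod_{i=1}^{m-1}(s(i+1)+1)$, both sides of which equal $(s+1)(2s+1)\cdots(ms+1)$.

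The one step requiring genuine care is the degree bookkeeping in the two inductive steps: among the roughly $2s$ terms on the right of the recursion one must verify that exactly one (respectively exactly two) achieve the minimal degree while all others strictly exceed it. This boils down to evaluating $\lceil\,\cdot\,/s\rceil$ on the finitely many shifted arguments $n-1$, $n-2-p$, $n-3-p$, but one must pay attention to the extreme indices $p\in\{s-3,s-2\}$ and, because of the constraint $p\geq 0$, check separately that nothing degenerates at the smallest admissible value $s=3$. I also note that (\ref{eq:sn}) must be proven before (\ref{eq:sn+s-1}), since the recursion for $b_m$ invokes $a_m$.
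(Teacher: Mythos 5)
Your proof is correct, but it proceeds by a genuinely different route from the paper's. The paper argues combinatorially and directly on the fixed points of $I_{\Gamma_{2,2,s}}$: it observes that a minimal-brick fixed point of length $sn$ must consist entirely of bricks of size $s$, shows that the first $s-1$ entries of the first brick are forced to be $1,\ldots,s-1$ while the last entry is free (giving $(n-1)s+1$, resp.\ $sn$ or $sn+1$, choices), and peels off the first brick to obtain the recursions $a_n=(s(n-1)+1)a_{n-1}$ and $b_n=(sn+1)a_n+sn\,b_{n-1}$, followed by the same telescoping algebra you use. You instead extract exactly these two recursions mechanically from the already-established recursion for $U_{\Gamma_{2,2,s},n}(y)$, using only the lower bound $\deg_{\min}U_{\Gamma_{2,2,s},m}(-y)\geq \lceil m/s\rceil$ (which, as you note, follows from Lemma \ref{lem:keyGamma} together with the brick-size bound) to discard all but the surviving terms. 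I checked your ceiling bookkeeping: for $n=sm$ only the $p=s-2$ summand of the linear sum reaches degree $m$, contributing $(s(m-1)+1)a_{m-1}$; for $n=sm+s-1$ only the $p=s-3$ and $p=s-2$ summands reach degree $m+1$, contributing $(sm+1)a_m$ and $sm\,b_{m-1}$ respectively, and the $y^2$-sum always lands in degree at least $m+2$ (here $s\geq 3$ is needed, consistent with the paper's standing hypothesis). Your base cases $a_1=1$ and $b_0=1$ give $b_1=(s+1)+s=2s+1$, matching the paper's directly computed $U_{\Gamma_{2,2,s},2s-1}(-y)|_{y^2}=2s+1$. The trade-off: your argument is shorter and nearly automatic once the recursion is in hand, but it gives no structural information about which fixed points realize the minimal power of $y$; the paper's argument is longer but exhibits those fixed points explicitly, which is what lets the authors push the same method to the case analysis for $U_{\Gamma_{2,2,3},3k+1}(-y)|_{y^{k+1}}$ immediately afterwards.
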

\begin{proof}

For (\ref{eq:sn}), we first notice that any fixed point $(B,\sg)$ of $I_{\Gamma_{2,2,s}}$ that contributes to $U_{\Gamma_{2,2,s},sn}(-y)|_{y^n}$ must 
have only bricks of size $s$.  Thus $B=(s,\ldots, s)$. We shall 
prove (\ref{eq:sn}) by induction on $n$. Clearly, 
$U_{\Gamma_{2,2,s},s}(-y)|_{y} =1$.  Now suppose $(B,\sg)$ 
is a fixed point of $I_{\Gamma_{2,2,s}}$ of size $sn$ where 
$\sg = \sg_1 \cdots \sg_{sn}$. By 
our arguments above, the first $s-1$ elements 
of the first brick must be $1,2, \ldots, s-1$, reading from 
left to right. The element in the next cell $\sg_s$ can be arbitrary. 
That is, if it is equal to $s$, then there will be an increase 
between the first two bricks and if $\sg_s >s$, then it 
must be the case that $\sg_{s+1} =s$ in which case there 
will by $\Gamma_{2,2,s}$-match that involves the last two 
cells of the first brick and the first two cells of the next brick. 
We can then remove the first brick and adjust the remaining numbers 
to produce a fixed point $O'$ of $I_{\Gamma_{2,2,s}}$ of 
length $s(n-1)$ in which every brick is of size $s$. It 
follows by induction that 
\begin{align*} 
U_{\Gamma_{2,2,s},sn}(-y)|_{y^n} & =((n-1)s+1)
U_{\Gamma_{2,2,s},s(n-1)}(-y)|_{y^{n-1}} \\
&= ((n-1)s+1) \prod_{i=1}^{n-1} \left((i-1)s+1\right) \\
&= \prod_{i=1}^n \left((i-1)s+1\right).
\end{align*}

Next consider $U_{\Gamma_{2,2,s},2s-1}(-y)|_{y^2}$. 
In this case, either the first brick of size $s-1$ or 
the first brick is of size $s$. If the first brick 
is of size $s$, then we can argue as above that 
the first $s-1$ elements of the first brick are $1, \ldots, s-1$, 
and we have $s$ choices for the last element of the first brick. 
If the first brick 
is of size $s-1$, then we can argue as above that 
the first $s-2$ elements of the first brick are $1, \ldots, s-2$, 
and we have $s+1$ choices for the last element of the first brick. 
Thus 
$$U_{\Gamma_{2,2,s},2s-1}(-y)|_{y^2} = 2s+1.$$

Next consider $U_{\Gamma_{2,2,s},(ns+s-1)}(-y)|_{y^{n+1}}$. 
In such a situation, any fixed point $(B,\sg)$ of 
$I_{\Gamma_{2,2,s}}$ that can contribute to 
$U_{\Gamma_{2,2,s},(ns+s-1)}(-y)|_{y^{n+1}}$ must have 
$n$ bricks of size $s$ and one brick of size $s-1$. 
If the first brick 
is of size $s$, then we can argue as above that 
the first $s-1$ elements of the first brick are $1, \ldots, s-1$, 
and we have $sn$ choices for the last element of the first brick. 
Then we can remove this first brick and adjust 
the remaining numbers to produce a fixed point 
$O'$ in $I_{\Gamma_{2,2,s}}$ of size $(n-1)s+s-1$ which 
has $n-1$ bricks of size $s$ and one brick of size $s-1$.
If the first brick 
is of size $s-1$, then we can argue as above that 
the first $s-2$ elements of the first brick are $1, \ldots, s-2$, 
and we have $sn+1$ choices for the last element of the first brick.
Then we can remove this first brick and adjust 
the remaining numbers to produce a fixed point 
$O'$ in $I_{\Gamma_{2,2,s}}$ of size $ns$ which 
has $n$ bricks of size $s$ 

Thus if $n \geq 2$, 
\begin{align*}
U_{\Gamma_{2,2,s},(ns+s-1)}(-y)|_{y^{n+1}} & = 
(sn+1) U_{\Gamma_{2,2,s},ns}(-y)|_{y^{n}} + 
(sn) U_{\Gamma_{2,2,s},((n-1)s+s-1)}(-y)|_{y^{n}} \\
& = (sn+1)\prod_{i=1}^n ((i-1)s+1) + 
(sn) \prod_{i=1}^{n-1}((i+1)s+1) \\
&= (s+1) \prod_{i=1}^{n-1} ((i+1)s+1) + (sn)\prod_{i=1}^{n-1}((i+1)s+1) \\
&= ((n+1)s+1) \prod_{i=1}^{n-1}((i+1)s+1) \\
&= \prod_{i=1}^{n}((i+1)s+1).
\end{align*}
\end{proof}

Unfortunately, we cannot extend this type of argument 
to compute $U_{\Gamma_{2,2,s},ns+k}(-y)|_{y^{n+1}}$ where 
$1 \leq k \leq s-2$.  The problem is that we have more than  
one choice for the sizes of the bricks in such cases. For example, to 
compute  $U_{\Gamma_{2,2,3},4}(-y)|_{y^{3}}$, 
the brick sizes could be some rearrangement of 
(3,1) or (2,2). One can use our recursions to 
compute $U_{\Gamma_{2,2,s},ns+k}(-y)|_{y^{n+1}}$ for 
small values of $s$. For example, we can find all 
the coefficients of the lowest power 
of $U_{\Gamma_{2,2,3},n}(-y)$.   That is, we claim 
 \begin{itemize}
\item[(i)] $ U_{\Gamma_{2,2,3},3k}(-y)|_{y^k} = \prod_{i = 1}^{k}(3(i-1)+1)$,
\item[(ii)] $U_{\Gamma_{2,2,3},3k+2}(-y)|_{y^{k+1}} = \prod_{i=1}^{k}(3(i+1)+1)$, and 
\item[(iii)] if $A_k = U_{\Gamma,3k+1}(-y)|_{y^{k+1}} $ then $A_1 = 5$ and $A_{k} = (3k-1)A_{k-1} + (3k)\prod_{i=1}^{k-1}(3i+4)$ for all $k \geq 2$.
\end{itemize} 
Clearly, (i) and (ii) follow from our previous theorem. 
To prove (iii), note that \begin{align*}
A_k= U_{\Gamma,3k+1}(-y)|_{y^{k+1}}& = U_{\Gamma,3k}(-y)|_{y^{k}} + (3k) U_{\Gamma,3k-1}(-y)|_{y^{k}} + (3k-1) U_{\Gamma,3k-2}(-y)|_{y^{k}} \nonumber \\
& \hspace{50 pt} - (3k-1) U_{\Gamma,3k-2}(-y)|_{y^{k-1}} - (3k-2) U_{\Gamma,3k-3}(-y)|_{y^{k-1}} \nonumber \\
& = \prod_{i=1}^{k}(3i-2) +(3k)\prod_{i=1}^{k-1}(3i+4) + (3k-1)U_{\Gamma,3k-2}(-y)|_{y^{k}} \nonumber \\
& \hspace{100pt}  - (3k-2)\prod_{i=1}^{k-1}(3i-2)\nonumber \\
& = (3k)\prod_{i=1}^{k-1}(3i+4) + (3k-1)U_{\Gamma,3k-2}(-y)|_{y^{k}}\\
&= (3k-1)A_{k-1} + (3k)\prod_{i=1}^{k-1}(3i+4).
\end{align*} 
This explains all the coefficients for the smallest power of $y$ in the polynomials $U_{\Gamma_{2,2,3},n}(-y)$ for the family $\Gamma_{2,2,3} = \{ 1324,1423,1234  \}.$

\section{The Proofs of Theorem \ref{thm:1324123} and Theorem \ref{thm:1324p}} 
In this section, we will study two more examples of 
the differences between the recursions for 
$U_{\Gamma,n}(y)$'s and the recursions 
for $U_{\Gamma \cup \{12 \cdots s(s+1)\},n}(y)$'s. In 
particular, we will prove Theorems  \ref{thm:1324123} and \ref{thm:1324p}.\\
\ \\
{\bf Proof of Theorem \ref{thm:1324123}}\\
\ \\
Let $\Gamma = \{ 1324, 123\}$.  Let $(B,\sg)$ be a fixed 
point $I_{\Gamma}$ where $B=(b_1, \ldots, b_k)$ and 
$\sg = \sg_1 \cdots \sg_n$. By Lemma \ref{lem:keyGamma}, 
we know that all the bricks $b_i$ 
must be of size 1 or 2. Since the minimal elements 
in bricks of $B$ must weakly increase, we 
see that 1 must be in cell 1 and 2 must be either 
in $b_1$ or it is in the first cell of $b_2$.  Thus we 
have three possibilities.\\ 
\ \\
{\bf Case 1.} 2 is in $b_1$.\\
\ \\
In this case, $b_1$ must be of size 2 and we can remove 
$b_1$ from $(B,\sg)$ are reduce the remaining numbers 
by 2 to get a fixed point of $I_\Gamma$ of size 
$n-2$.  It then easily follows that the 
fixed points in Case 1 contribute 
$-yU_{\Gamma,n-2}(y)$ to $U_{\Gamma,n}(y)$. \\
\ \\
{\bf Case 2.} 2 is in $b_2$ and $b_1 =1$.\\
\ \\
In this case, it is easy to see that 1 cannot be involved 
in any $\Gamma$-match so that 
we can remove 
$b_1$ from $(B,\sg)$ are reduce the remaining numbers 
by 1 to get a fixed point of $I_\Gamma$ of size 
$n-1$.  It follows that the  
fixed points in Case 2 contribute 
$-yU_{\Gamma,n-1}(y)$ to $U_{\Gamma,n}(y)$.\\
\ \\
{\bf Case 3.} 2 is in $b_2$ and $b_1 =2$.\\
\ \\
In this case, there is descent between bricks $b_1$ and $b_2$ so 
that there must be a 1324-match in $\sg$ contained 
in the cells of $b_1$ and $b_2$. In particular, 
this means $b_2 =2$ and there is 1324-match starting at 1 in 
$\sg$. 
We then have two subcases. \\
\ \\
{\bf Subcase 3.a.} There is no $1324$-match in $(B,\sg)$ starting at cell 3 \\
\ \\
We claim that $ \{ \sg_1,\ldots,\sg_4  \}  = \{1,2,3,4\}$. 
If not, let $d = \min(\{1,2,3,4\}-\{\sg_1, \ldots, \sg_4\}).$ Then $d$ must be in cell 5, the first cell of brick $b_3$ and there is a decrease between bricks $b_2$ and $b_3$ since $d \leq 4 < \sg_4.$ Thus, in order to avoid combining bricks $b_2$ and $b_3,$ we need a $1324$-match among the cells of these two bricks. However, the only possible $1324$-match among the cells of $b_2$ and $b_3$ would have to start at cell $3$ where $\sg_3=2.$ This contradicts the assumption that there is no $1324$-match in $(B,\sg)$ starting at cell 3. As a result, it must be the case that the first four numbers must occupy the first four cells of 
$(B,\sg)$ so we must have $\sg_1 =1$, $\sg_2=3$, 
$\sg_3 =2$, $\sg_4 =4$, and $\sg_5 =5$.
It then follows that if we let $O'$ be the result by removing the first four cells from $(B,\sg)$ and then subtract 4 from the remaining entries in the cells, then $O'$ will be a fixed point in $\mathcal{O}_{\Gamma,n-4}$. It 
then easily follows that the contribution of fixed points in subcase 3.a to $U_{\Gamma,n}(y)$ is $(-y)^2U_{\Gamma,n-4}(y)$.

\ \\
{\bf Subcase 3.b.} There is a $1324$-match in $O$ starting at cell $3$ \\
\ \\
In this case, there is decrease between bricks $b_2$ and $b_3$. Hence, the 1324-match starting at cell 3 must be 
contained in the cells of $b_2$ and $b_3$ so that $b_3$ must be of size 2. 
In general, suppose that the bricks $b_2, \dots, b_{k-1}$ all have exactly two cells and there are $1324$-matches starting at cells $1,3,\dots, 2k-3$ but there is no $1324$-match starting at cell $2k-1$ in $O.$ 

Similar to Subcase 3.a, we will show that $ \{ \sg_1, \ldots, \sg_{2k}  \}  = \{1,2,\dots,2k\}$. That is, 
the first $2k$ numbers must occupy the first $2k$ cells in $O$. If not, 
let $d = \min(\{1,2,\dots,2k\}-\{\sg_1, \ldots, \sg_{2k}\})$. Since 
the minimal elements of the bricks are weakly increasing, it 
must be the case that $d$ is in the first cell of $b_{k+1}$. 
Next, the fact that there are 1324-matches starting 
in cells $1,3, \ldots, 2k-1$ easily implies that 
$\sg_{2k}$ is the largest element in $\{\sg_1, \ldots, \sg_{2k}\}$ 
which means that $\sg_{2k}>d$.  But then there is a 
decrease between bricks $b_k$ and $b_{k+1}$ which means 
that there must be a 1324-match contained in the cells of 
$b_k$ and $b_{k+1}$.  This implies that there 
is a 1324-match starting at cell $2k-1$ which contradicts our 
assumption.

Thus, if we remove the first $2k$ cells of $(B,\sg)$ 
and subtract $2k$ from the remaining elements, 
we will obtain a fixed point $O'$ in $\mathcal{O}_{\Gamma, n - 2k}.$ Therefore, each fixed point $O$ in this case will contribute $(-y)^{k}U_{\Gamma, n-2k}(y)$ to $U_{\Gamma, n}(y).$ The final task is to count the number of 
permutations $\sg_1 \cdots \sg_{2k}$ of $\mathfrak{S}_{2k}$ that 
has 1324-matches starting at positions $1,3, \ldots, 2k-3$. In \cite{JR2}, Jones and Remmel gave  a bijection between the set of such 
$\sg$  and the set of Dyck paths of length $2k-2$. 
Hence, there are $C_{k-1}$ such fixed points, where $C_{n} = \frac{1}{n-1}\binom{2n}{n}$ is the $n^{th}$ Catalan number. It then easily follows that 
the contribution of the fixed points in Subcase 3.b to $U_{\Gamma,n}(y)$ is 
\[ \sum_{k=2}^{\Floor }(-y)^{k}C_{k-1}U_{\Gamma, n-2k}(y).  \]

Hence, we  know that $U_{\Gamma,1}=-y$ and for $n > 1$, 
\begin{align*}
\displaystyle  U_{\Gamma,n}(y) &= -yU_{\Gamma,n-1}(y) -yU_{\Gamma,n-2}(y)
+ \sum_{k=2}^{\Floor }(-y)^{k}C_{k-1}U_{\Gamma, n-2k}(y). 
\end{align*} 
This proves Theorem  \ref{thm:1324123}.

We have  computed the polynomials $U_{\{1324,123\},n}(-y)$ for small $n$ 
which are given in the Table \ref{tab:U1324123} below. 
\begin{table}[h]
\begin{center}
\begin{tabular}{c|l}
n & $U_{\{1324,123\},n}(-y)$ \\ 
\hline 1 & $y$\\
2 & $y + y^2$ \\
3 & $2y^2 + y^3$\\
4 & $2y^2 +3y^3 + y^4$\\
5 & $5y^3 + 4y^4 + y^5$\\
6 & $5y^3 + 9y^4 +5y^5 + y^6$ \\ 
7 & $14y^4 +14y^5 + 6y^6 + y^7$ \\
8 & $14y^4 +28y^5 + 20y^6 + 7y^7 + y^8$ \\ 
9 & $42y^5 + 48y^6 + 27y^7 + 8y^8 + y^9 $  \\ 
10 & $42y^5  + 90y^6 + 75y^7 + 35y^8 + 9y^9 +y^{10}$  \\ 
\end{tabular} 
\end{center}
\caption{The polynomials $U_{\Gamma,n}(-y)$ for $\Gamma = \{1324,123\}$} \label{tab:U1324123}
\end{table}

An anonymous referee observed that up to a power of $y$, 
the odd rows are the triangle A039598 in the OEIS and 
the even rows are the triangle A039599 in the OEIS.  These 
tables arise in expanding the powers of $x$ in terms of 
the Chebyshev polynomials of the second kind. 
Since there are explicit formula for entries in these tables, 
we have the following theorem.

\begin{theorem} \label{1324,123}

Let $\Gamma = \{1324,123\}$. Then for all 
$n \geq 0$, 
\begin{equation}\label{u2n}
U_{\Gamma,2n}(y) = 
\sum_{k=0}^n \frac{(2k+1)\binom{2n}{n-k}}{n+k+1}(-y)^{n+k+1}
\end{equation}
and 
\begin{equation}\label{u2n+1}
U_{\Gamma,2n+1}(y) = 
\sum_{k=0}^n \frac{2(k+1)\binom{2n+1}{n-k}}{n+k+2}(-y)^{n+k}
\end{equation}
\end{theorem}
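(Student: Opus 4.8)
The plan is to prove both identities bijectively, recovering the triangles A039599 and A039598 exactly as the referee noticed. By Lemma~\ref{lem:keyGamma}, $U_{\Gamma,n}(-y)=\sum_{O}y^{b(O)}$ where the sum is over the fixed points $O=(B,\sg)$ of $I_{\Gamma}$ of size $n$ and $b(O)$ is the number of bricks of $B$. The case analysis in the proof of Theorem~\ref{thm:1324123} shows that, reading from left to right, every such $O$ is a \emph{unique} concatenation of indecomposable blocks of two kinds: (a) a single one-cell brick, which I call a $U$-block; and (b) for each $\ell\ge 1$, a run of $\ell$ consecutive two-cell bricks whose $2\ell$ entries are exactly $\{1,\dots,2\ell\}$ and which carries $1324$-matches starting at cells $1,3,\dots,2\ell-3$, which I call a $C_{\ell}$-block. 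There are $C_{\ell-1}$ distinct $C_{\ell}$-blocks, by the Jones--Remmel bijection with Dyck paths of semilength $\ell-1$ used in the proof of Theorem~\ref{thm:1324123} (for $\ell=1$ this is the lone $\{1,2\}$-brick, and $C_{0}=1$); composing that bijection with $P\mapsto UPD$ identifies $C_{\ell}$-blocks with the \emph{primitive} Dyck paths of semilength $\ell$, i.e.\ those meeting the $x$-axis only at their two endpoints.

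Next I would use the classical last-return factorization of lattice paths: every nonnegative $\{+1,-1\}$-step path of length $L$ decomposes uniquely, left to right, into a word each of whose letters is either a single up-step or a primitive Dyck path, and the number of single-up-step letters in that word equals the terminal height of the path. Gluing this to the block decomposition above yields, for every $n$, a bijection between the fixed points of $I_{\Gamma}$ of size $n$ and the nonnegative $\{+1,-1\}$-step paths of length $n$, under which each one-cell brick becomes a single up-step and each $C_{\ell}$-block becomes a primitive Dyck path of semilength $\ell$. Hence $b(O)$ equals the total number of up-steps of the associated path $w$; if $w$ ends at height $h$ then $b(O)=(n+h)/2$, so that $U_{\Gamma,n}(-y)=\sum_{w}y^{(n+h(w))/2}$, the sum over nonnegative $\{+1,-1\}$-step paths $w$ of length $n$ with terminal height $h(w)$.

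To finish I would read off the classical path counts. The number of nonnegative $\{+1,-1\}$-step paths of length $2N$ ending at height $2k$ is the ballot number $\binom{2N}{N-k}-\binom{2N}{N-k-1}=\frac{2k+1}{N+k+1}\binom{2N}{N-k}$, the $(N,k)$-entry of A039599; the number of such paths of length $2N+1$ ending at height $2k+1$ is $\binom{2N+1}{N-k}-\binom{2N+1}{N-k-1}=\frac{2(k+1)}{N+k+2}\binom{2N+1}{N-k}$, the $(N,k)$-entry of A039598. Substituting into the displayed sum for $U_{\Gamma,2n}(-y)$, respectively $U_{\Gamma,2n+1}(-y)$, and then replacing $y$ by $-y$, gives the two formulas of the theorem; the only arithmetic needed is $\binom{2m}{m-k}-\binom{2m}{m-k-1}=\binom{2m}{m-k}\bigl(1-\tfrac{m-k}{m+k+1}\bigr)$ and its obvious analogue.

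I expect the real work to be entirely in the first paragraph: one must verify that the three-way case split of Theorem~\ref{thm:1324123} genuinely produces a \emph{unique} factorization into indecomposables, that a $C_{\ell}$-block always occupies precisely the smallest $2\ell$ of the available values, and that no descent or $1324$-match straddles a block boundary, so that fixed points are in bijection with arbitrary words over the alphabet $\{U\text{-block}\}\cup\{C_{\ell}\text{-block}:\ell\ge 1\}$. Everything after that is routine. As a check, the generating-function shadow of this factorization is $\sum_{n\ge 0}U_{\Gamma,n}(-y)\,t^{n}=\bigl(1-yt-\sum_{\ell\ge 1}C_{\ell-1}y^{\ell}t^{2\ell}\bigr)^{-1}=\dfrac{2}{\,1-2yt+\sqrt{1-4yt^{2}}\,}$, whose even and odd parts in $t$, after setting $z=yt^{2}$, equal $\dfrac{C(z)}{1+y-yC(z)}$ and $\dfrac{yt\,C(z)^{2}}{1+y-yC(z)}$ with $C(z)=\sum_{j\ge 0}C_{j}z^{j}$; these are precisely the bivariate generating functions of A039599 and A039598, so one could alternatively finish the proof by this purely algebraic route.
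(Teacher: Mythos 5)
Your proof is correct, but it is organized quite differently from the paper's. The paper gives two separate bijections, one for each parity: for length $2N+1$ it maps the fixed points having $2k+1$ bricks of size $1$ onto nonnegative paths from $(0,0)$ to $(2N+1,2k+1)$, and for length $2N$ it maps the fixed points having $2k$ bricks of size $1$ onto paths from $(0,0)$ to $(2N,0)$ with exactly $k$ down-steps ending on the $x$-axis (these paths may go below the axis, and the construction uses flips of Dyck paths to build the below-axis portions). Both of the paper's maps are assembled by rotating each maximal run of size-$2$ bricks into a standard tableau of two-row rectangular shape and applying the tableau-to-Dyck-path bijection $\phi$, with the size-$1$ bricks inserted as single up-steps (odd case) or as up/down pairs (even case). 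You instead refine the decomposition down to its primitive pieces --- the chains of size-$2$ bricks linked by consecutive $1324$-matches, which are exactly the indecomposable factors isolated in the three-way case analysis proving Theorem \ref{thm:1324123} --- send each such chain to a primitive Dyck path, and invoke the last-return factorization of nonnegative paths to obtain a single parity-uniform bijection with nonnegative paths of length $n$ graded by terminal height; the ballot-number formula then finishes both cases at once. Your route buys uniformity (one bijection and one counting formula, no flipped paths), the fact that the only combinatorial verification needed is the unique-factorization statement already implicit in the proof of Theorem \ref{thm:1324123}, and as a bonus the closed form $\sum_{n\ge 0}U_{\Gamma,n}(-y)t^n = 2/\bigl(1-2yt+\sqrt{1-4yt^2}\bigr)$; the paper's route buys a direct match with the specific combinatorial interpretations of A039598 and A039599 recorded in the OEIS. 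Your flagged ``real work'' (unique factorization into blocks, each $C_\ell$-block occupying the smallest $2\ell$ available values, no match straddling a block boundary) is genuinely needed, but it is exactly the content of Cases 1, 2, 3.a, 3.b of the proof of Theorem \ref{thm:1324123} applied iteratively, so the gap is one of bookkeeping rather than substance.

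One discrepancy you should not gloss over: your computation yields $U_{\Gamma,2n}(y)=\sum_{k=0}^n \frac{(2k+1)\binom{2n}{n-k}}{n+k+1}(-y)^{n+k}$ and $U_{\Gamma,2n+1}(y)=\sum_{k=0}^n \frac{2(k+1)\binom{2n+1}{n-k}}{n+k+2}(-y)^{n+k+1}$, whereas the displayed statement has the exponents $n+k+1$ and $n+k$ attached to the opposite cases. Your exponents are the correct ones: a fixed point of even length $2n$ with $2k$ bricks of size $1$ has $n+k$ bricks in all and hence weight $(-y)^{n+k}$, one of odd length $2n+1$ with $2k+1$ bricks of size $1$ has $n+k+1$ bricks, and Table \ref{tab:U1324123} (e.g.\ $U_{\Gamma,2}(y)=-y+y^2$ and $U_{\Gamma,3}(y)=2y^2-y^3$) confirms this. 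The paper's own proof, which assigns weight $(-y)^{n+k+1}$ to the elements of $\mathcal{F}_{2n+1,2k+1}$ and $(-y)^{n+k}$ to those of $\mathcal{F}_{2n,2k}$, agrees with you; the statement of the theorem simply has the two exponents swapped.
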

\begin{proof}

First we consider the polynomials $U_{\Gamma,2n+1}(-y)$ which 
correspond to the entries in the table $T(n,k)$ for $0 \leq k \leq n$ 
of entry A039598 in the OEIS. $T(n,k)$ has an explicit 
formula, namely,  
$$T(n,k) = \frac{2(k+1)\binom{2n+1}{n-k}}{n+k+2}$$ 
for all $n \geq 0$ and $0 \leq k \leq n$. Let $\mathcal{T}(n,k)$ be set 
all of paths of length $2n+1$ consisting 
of either up steps $(1,1)$ or down steps $(1,-1)$ that start 
at (0,0) and end at $(2n+1,2k+1)$ which stay above the 
$x$-axis. Then one of the combinatorial 
interpretations of the $T(n,k)$'s is that $T(n,k) =|\mathcal{T}(n,k)|$. Let 
$\mathcal{F}_{2n+1,2k+1}$ be the set of all 
fixed points of $I_{\Gamma}$ with $2k+1$ bricks of 
size 1 and $n-k$ bricks of size 2.  We will construct a bijection 
$\theta_{n,k}$ from $\mathcal{F}_{2n+1,2k+1}$ onto $\mathcal{T}(n,k)$.  
Note all 
$(B,\sg) \in  \mathcal{F}_{2n+1,2k+1}$ have weight $(-y)^{n+k+1}$ so that 
the bijections $\theta_{n,k}$ will prove (\ref{u2n+1}).

First we must examine the fixed points of $I_{\Gamma}$ in greater detail. 
Note that since $\Gamma$ contains the identity permutation 
$123$, all the bricks in any fixed point of $I_{\Gamma}$ must be of size 1 
or size 2. Next, we consider the structure of the fixed points 
of $I_{\Gamma}$ which have $k$ bricks of size 1 and 
$\ell$ bricks of size 2. Suppose $(B,\sg)$ is such a fixed 
point where $B=(b_1, \ldots, b_{k +\ell})$ and 
that the bricks of size 1 in $B$ are 
$b_{i_1}, \ldots, b_{i_k}$ where 
$1 \leq i_1 < \cdots < i_k \leq k+\ell$. For any $s$, there cannot 
be a decrease between brick $b_{i_j-1}$ and brick $b_{i_j}$ in $B$ since otherwise we could combine bricks $b_{i_j-1}$ and $b_{i_j}$, which would violate 
our assumption that $(B,\sg)$ is a fixed point of $I_{\Gamma}$. 
Next we claim that if there are $s$ bricks of size 
$2$ that come before brick $b_{i_j}$ so that 
$b_{i_j}$ covers cell $2s+j$ in $(B,\sg)$, then 
$\sg_{2s+j} = 2s+j$ and $\{\sg_1, \ldots, \sg_{2s+j}\} = \{1, \ldots, 2s+j\}$.
To prove this claim, we proceed by induction.  For the base case, 
suppose that $b_{i_1}$ covers cell $2s+1$ so that $(B,\sg)$ starts 
out with $s$ bricks of size 2.  If $s=0$, there is nothing 
to prove.  Next suppose that $s =1$. Then we know that 
in all fixed points of 
$I_{\Gamma}$, 2 must be in cell 2 or cell 3. Since 
there is an increase between $b_1$ and $b_2$, it must be the 
case that 1 and 2 lie in $b_1$ and since the minimal elements 
in the brick form a weakly increasing sequence, it must be the case 
that $b_2$ is filled with 3. If $s \geq 2$, then for 
$1 \leq i < s$, either there is an increase between $b_i$ and 
$b_{i+1}$ in which case the elements in $b_i$ and $b_{i+1}$ must 
match the pattern $1234$, or there is a decrease between $b_i$ and 
$b_{i+1}$ in which case the four  elements must match the pattern 
1324. This means that if for each brick of size $2$, we place 
the second element of the brick on the top of the first element, 
then any two consecutive bricks will be one of the two forms 
pictured in Figure \ref{fig:stack}. 
Thus if we consider the $s \times 2$ array built 
from the first $s$ bricks of size 2, we will obtain a column 
strict tableaux with distinct entries of shape $(s,s)$. 
In particular, it must be the case that the 
largest element in the array is the element which appears 
at the top of the last column. That element corresponds 
to the second cell of brick $b_s$. Since there is an increase 
between brick $b_s$ and brick $b_{s+1}$ it must mean that the 
element in brick $b_{s+1}$ is larger than any of the elements that appear 
in bricks $b_1, \ldots, b_s$. Thus $\sg_i < \sg_{2s+1}$ for 
$i \leq 2s$.  Since the minimal 
elements in the bricks are increasing, it follows 
that $\sg_{2s+1}< \sg_j$ for all $j > 2s+1$ so that it 
must be the case that $\sg_{2s+1} = 2s+1$ and 
$\{\sg_1, \ldots, \sg_{2s+1}\} = \{1, \ldots, 2s+1\}$. 
Thus the base case of our induction holds. 

\fig{stack}{Patterns for two consecutive brick of size 2 in 
a fixed point of $I_{\Gamma}$.}

We can repeat the same argument for $i_j$ where $j >1$. That 
is, by induction, we can assume that if 
there are $r$ bricks of size $2$ that precede brick $b_{i_{j-1}}$, 
then $\sg_{2r+j-1} = 2r+j-1$ and 
$\{\sg_1 , \ldots, \sg_{2r+j-1}\} = \{1, \ldots, 2r+j-1\}$. Hence if 
we remove these elements and subtract $2r+j-1$ from the remaining 
elements in $(B,\sg)$, we would 
end up with a fixed point of $I_{\gamma}$. Thus we can 
repeat our argument for the base case to prove that if 
there are $s$ bricks of size 2 between brick $b_{i_{j-1}}$ and 
$b_{i_j}$, then $\sg_{2r+2s+j} = 2r+2s+j$ and 
$\{\sg_1 , \ldots, \sg_{2r+2s +j}\} = \{1, \ldots, 2r+2s+j\}$.

Next we note that there is a well known bijection $\phi$ between 
standard tableaux of shape $(n,n)$ and Dyck paths of 
length $2n$, see \cite{Stanley}. Here a Dyck path is path consisting  
of either up steps $(1,1)$ or down steps $(1,-1)$ that starts  
at (0,0) and ends at $(2n,0)$ which stays above the 
$x$-axis. Given a standard tableau $T$, $\phi(T)$ is the Dyck path 
whose $i$-th segment is an up step if $i$ is the first row 
and whose $i$-th segment is a down step if $i$ is in the second row. 
This bijection is illustrated in Figure \ref{fig:catalanbij}. 

\fig{catalanbij}{The bijection $\phi$.}

We can now easily describe our desired bijection $\theta_{n,k}$. 
Starting with a fixed point $(B,\sg)$ in $\mathcal{F}_{2n+1,2k+1}$ 
where $B=(b_1, \ldots, b_{n+k+1})$, we 
can rotate all the bricks of size 2 by $-90$ degrees and 
end up with an array consisting of bricks of size one and 
$2 \times r$ arrays corresponding to standard tableaux. For example, 
this step is pictured in the second row of Figure \ref{fig:unoddbij}. 
By our remarks above, each $2 \times r$ array corresponds 
to standard tableaux of shape $(r,r)$ where the entries lie 
in some consecutive sequence of elements from $\{1, \ldots, 2n+1\}$. 
Suppose that $b_{i_1}, \ldots, b_{i_{2k+1}}$ are the bricks of 
size $1$ in $B$ where $i_1 < \cdots < i_{2k+1}$.  Let 
$T_j$ be the standard tableau corresponding to the consecutive 
string of brick of size 2 immediately preceding brick $b_{i_j}$ and 
$P_i$ be the Dyck path $\phi(T_i)$. If there is no bricks 
of size 2 immediately preceding $b_{i_j}$, then $P_j$ is just the 
empty path. Finally let $T_{2k+2}$ the standard tableau corresponding 
to the bricks of size 2 following $b_{i_{2k+1}}$ and $P_{2k+2}$ be 
the Dyck path corresponding to $\phi(T_{2k+2})$ where again 
$P_{2k+2}$ is the empty path if there are no bricks of size 2 following 
$b_{i_{2k+1}}$. Then 
$$\theta_{n,k}(B,\sg) = P_1(1,1)P_2(1,1) \ldots P_{2k+1}(1,1)P_{2k+2}.$$
For example, line 3 of Figure \ref{fig:unoddbij} illustrates this 
process. In fact, it easy to see that if $i$ is in the bottom 
row of intermediate diagram for $(B,\sg)$, then the $i$-th segment 
of $\theta_{n,k}(B,\sg)$ is an up step and if $i$ is in the top  
row of intermediate diagram for $(B,\sg)$, then the $i$-th segment 
of $\theta_{n,k}(B,\sg)$ is an down step.

\fig{unoddbij}{The bijection $\theta_{n,k}$.}

The inverse of $\theta_{n,k}$ is also easy to describe. That is, 
given a path $P$ in $\mathcal{T}(n,k)$, we let 
$d_i$ be the step that corresponds to the last up step that ends at level $i$. 
Then $P$ can be factored as 
$$P_1 d_1 P_2 d_2 \ldots P_{2k+1}d_{2k+1}P_{2k+2}$$
where each $P_i$ is a path that corresponds to a Dyck path 
that starts at level $i-1$ and ends at level $i-1$ and stays above 
the line $x = i-1$. Then for each $i$, $T_i = \phi^{-1}(P_i)$ is a 
standard tableau. Using these tableaux and being cognizant of the 
restrictions on the initial segments of elements of $\mathcal{F}_{2n+1,2k+1}$ 
preceding bricks of size 1, one can easily reconstruct the 2 line 
intermediate array corresponding to 
$T_1 d_1 T_2 d_2 \ldots T_{2k+1} d_{2k+1} T_{2k+2}$. For example, 
this process is pictured on line 2 of Figure \ref{fig:unoddbij2}. 
Then we only have to rotate all the bricks of size corresponding to 
a bricks of height 2 by 90 degrees to obtain $\theta^{-1}_{n,k}(P)$. 
This step is pictured on line 3 of  Figure \ref{fig:unoddbij2}.

\fig{unoddbij2}{The bijection $\theta^{-1}_{n,k}$.}

Next we consider the polynomials $U_{\Gamma,2n}(-y)$ which 
correspond to the entries in the table $R(n,k)$ for $0 \leq k \leq n$ 
of entry A039599 in the OEIS. $R(n,k)$ has an explicit 
formula, namely,  
$$R(n,k) = \frac{(2k+1)\binom{2n}{n-k}}{n+k+1}$$ 
for all $n \geq 0$ and $0 \leq k \leq n$. 
Let $\mathcal{R}(n,k)$ be set 
all of paths of length $2n$ consisting 
of either up steps $(1,1)$ or down steps $(1,-1)$ that start 
at (0,0) and end at $(2n,0)$ that have $k$ down steps that 
end on the line $x =0$. Here there is no requirement that the 
paths stay above the $x$-axis. Then one of the combinatorial 
interpretations of the $R(n,k)$s is that 
$R(n,k) =|\mathcal{R}(n,k)|$. Let 
$\mathcal{F}_{2n,2k}$ be the set of all 
fixed points of $I_{\Gamma}$ with $2k$ bricks of 
size 1 and $n-k$ bricks of size 2.  We will construct a bijection 
$\beta_{n,k}$ from $\mathcal{F}_{2n,2k}$ onto $\mathcal{R}(n,k)$.  Note all 
$(B,\sg) \in  \mathcal{F}_{2n,2k}$ weight $(-y)^{n+k}$ so that the bijections 
$\beta_{n,k}$ will prove (\ref{u2n+1}).

We can now easily describe our desired bijection $\beta_{n,k}$. 
Starting with a fixed point $(B,\sg)$ in $\mathcal{F}_{2n,2k1}$ 
where $B=(b_1, \ldots, b_{n+k})$, we 
can rotate all the bricks of size 2 by $-90$ degrees and 
end up with an array consisting of bricks of size one and 
$2 \times r$ arrays corresponding to standard tableaux. For example, 
this step is pictured in the second row of Figure \ref{fig:unevenbij}. 
By our remarks above, each $2 \times r$ array corresponds 
to standard tableaux of shape $(r,r)$ where the entries lie 
in some consecutive sequence of elements from $\{1, \ldots, 2n\}$. 
Suppose that $b_{i_1}, \ldots, b_{i_{2k}}$ are the bricks of 
size $1$ in $B$ where $i_1 < \cdots < i_{2k}$.  Let $T_s$ 
be the standard tableau corresponding to the bricks 
of size 2 immediately preceding brick $b_{j_s}$ for 
$1 \leq s \leq 2n$ and let $T_{2k+1}$ 
be the standard tableau corresponding to the bricks 
of size 2 following  brick $b_{i_{2k}}$. For $i =0, \ldots, 
2k+1$, let  $P_i$ be the Dyck path $\phi(T_i)$. In each 
case $j$ where there are no such bricks of size 2, then $P_j$ is just the 
empty path.  For each such $i$, let $\overline{P}_i$ denote 
the flip of $P_i$, i.e. the 
path that is obtained by flipping $P_i$ about the x-axis.  For example, 
the process of flipping a Dyck path is pictured in Figure \ref{fig:flip}. 

\fig{flip}{The flip of Dyck path.}

Then 
$$\beta_{n,k}(B,\sg) = \overline{P}_1(1,1)P_2(1,-1) 
\overline{P}_3(1,1)P_4(1,-1) \ldots \overline{P}_{2k-1}(1,1)P_{2k}(1,-1)
\overline{P}_{2k+1}.$$
That is, each pair $b_{i_{2j-1}},b_{i_{2j}}$ will correspond 
to an up step starting at $x =0$ followed by a Dyck path 
which starts at ends a line $x=1$ followed by down step ending at $x=0$. 
These segments are then connected by flips of Dyck path that 
stay below the $x$-axis. Thus $\beta_{n,k}(B,\sg)$ will have 
exactly $k$ down steps that end at $x=0$. 
For example, line 3 of Figure \ref{fig:unevenbij} illustrates this 
process.

\fig{unevenbij}{The bijection $\beta_{n,k}$.}

The inverse of $\beta_{n,k}$ is also easy to describe. That is, 
given a path $P$ in $\mathcal{R}(n,k)$, 
let $f_1, \ldots, f_k$ be the positions of the down steps that 
end at $x=0$ and define $e_1, \ldots, e_k$ so that 
$e_1$ is the right most up step that starts at $x=0$ and precedes $f_1$ and 
for $2 \leq i \leq k$,, $e_i$ is the right most up step that 
follows $f_{i-1}$ and precedes $f_i$. It is then easy to 
see that the path $Q_1$ which precedes $e_1$ must be 
a path that starts at (0,0) and ends at $(e_1-1,0)$ and stays 
below the $x$-axis so that $Q_1$ is the flip of some Dyck path $P_1$. 
Next, the path $Q_2$ between $(e_1,1)$ and $(f_1-1,1)$ must either be 
empty or is a path which stays above the line $x =1$ and hence 
corresponds to the Dyck path $P_2$.  In general, the path $Q_{2j-1}$ 
that starts at $(f_{j-1},0)$ and ends at $(e_j-1,0)$ must stay 
below the $x$-axis so that $Q_{2j-1}$ is the flip of some Dyck path $P_{2j-1}$. 
Similarly, the path $Q_{2j}$ between $(e_j,1)$ and $(f_j-1,1)$ must either be 
empty or is a path which stays above the line $x =1$ and hence 
corresponds to the Dyck path $P_{2j}$. Finally, the path $Q_{2k+1}$ which 
follows $(f_k,0)$ is either empty or is a path that ends at $(2n,0)$ 
and stays below the $x$-axis and, hence, corresponds to the flip of a Dyck path 
$P_{2k+1}$. In this way, we can recover the sequence 
of paths $P_1, \ldots, P_{2k+1}$, which are either empty or Dyck paths, such 
that 
$$P =\overline{P}_1(1,1)P_2(1,-1) 
\overline{P}_3(1,1)P_4(1,-1) \ldots \overline{P}_{2k-1}(1,1)P_{2k}(1,-1)
\overline{P}_{2k+1}.$$
Then for each $i$, $T_i = \phi^{-1}(P_i)$ is either a 
standard tableau or the empty tableau. 
Using these tableaux and being cognizant of the 
restrictions on the initial segments of elements of $\mathcal{F}_{2n,2k}$ 
preceding bricks of size one described above, 
one can easily reconstruct the 2 line 
intermediate arrays corresponding to 
$T_1 e_1 T_2 f_2 \ldots T_{2k-1} e_{2k} T_{2k} f_{2k} T_{2k+1}$.
 For example, 
this process is pictured on line 2 of Figure \ref{fig:unevenbij2}. 
Then we only have to rotate all the bricks of size corresponding to 
a brick of height 2 by 90 degrees to obtain $\beta^{-1}_{n,k}(P)$. 
This step is pictured on line 3 of  \ref{fig:unevenbij2}.

\fig{unevenbij2}{The bijection $\beta^{-1}_{n,k}$.}

\end{proof}

As a consequence of Theorem \ref{1324,123}, we have the 
closed expression for $\mbox{NM}_{\{1323,123\}}(t,x,y)$. 

\begin{theorem}
$$\mbox{NM}_{\{1323,123\}}(t,x,y) = \left( \frac{1}{U_{\{1323,123\}}(t,y)}  \right)^x  \mbox{~where~} $$
\begin{align*} U_{\{1323,123\}}(t,y) & =1+ \sum_{n \geq 1} 
\frac{t^{2n}}{(2n)!} \left(\sum_{k=0}^n 
\frac{(2k+1)\binom{2n}{n-k}}{n+k+1}(-y)^{n+k}\right)\\ 
& \hspace{60pt} + \sum_{n \geq 0}  \frac{t^{2n+1}}{(2n+1)!} \left(\sum_{k=0}^n \frac{2(k+1)\binom{2n+1}{n-k}}{n+k+2}
(-y)^{n+k+1}\right).\end{align*}

\end{theorem}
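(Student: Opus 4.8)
The plan is to deduce this statement directly from the two results already proved, with no new combinatorics required. Write $\Gamma=\{1324,123\}$. Theorem~\ref{thm:1324123} already gives $\mbox{NM}_{\Gamma}(t,x,y)=\bigl(1/U_{\Gamma}(t,y)\bigr)^{x}$ with $U_{\Gamma}(t,y)=1+\sum_{n\geq 1}U_{\Gamma,n}(y)\,t^{n}/n!$, and Theorem~\ref{1324,123} supplies closed forms for the coefficients $U_{\Gamma,n}(y)$, separately for even and for odd $n$. Hence the entire task is to reindex a single power series.

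Concretely, I would take $U_{\Gamma}(t,y)$ as given by Theorem~\ref{thm:1324123} and split the sum over $n\geq1$ according to the parity of $n$, writing $n=2m$ with $m\geq1$ for the even part and $n=2m+1$ with $m\geq0$ for the odd part, so that
$$U_{\Gamma}(t,y)=1+\sum_{m\geq1}U_{\Gamma,2m}(y)\,\frac{t^{2m}}{(2m)!}+\sum_{m\geq0}U_{\Gamma,2m+1}(y)\,\frac{t^{2m+1}}{(2m+1)!}.$$
Substituting the explicit formulas for $U_{\Gamma,2m}(y)$ and $U_{\Gamma,2m+1}(y)$ from Theorem~\ref{1324,123} into this expression produces exactly the asserted double series for $U_{\Gamma}(t,y)$, and raising $1/U_{\Gamma}(t,y)$ to the $x$th power then gives the claimed formula for $\mbox{NM}_{\Gamma}(t,x,y)$.

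Since both ingredients are already established, there is essentially no obstacle; the one point that warrants a little care is the bookkeeping of the parity split together with the normalization of the binomial coefficients and of the exponents on $-y$ when passing between the indexing conventions of Theorems~\ref{thm:1324123} and~\ref{1324,123}. A convenient sanity check is to verify that the $m=0$ term of the odd block reproduces the single term $-yt$ coming from $U_{\Gamma,1}(y)=-y$, and that the first even and odd blocks agree with the corresponding rows of Table~\ref{tab:U1324123}; this also pins down the correct powers of $-y$ in the final statement. Because $x$ appears only as an exponent, the identity is really a claim about the single series $U_{\Gamma}(t,y)$, so it suffices to check it coefficient by coefficient in $t$ — which is precisely what the substitution accomplishes.
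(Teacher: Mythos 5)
Your proposal is correct and is exactly the paper's (implicit) argument: the theorem is stated as an immediate consequence of Theorems~\ref{thm:1324123} and~\ref{1324,123}, obtained by the parity split of $U_{\Gamma}(t,y)$ and substitution of the closed forms, with no further combinatorics. The sanity check you flag on the powers of $-y$ is genuinely needed, since the exponents $(-y)^{n+k+1}$ and $(-y)^{n+k}$ are swapped between the even and odd cases as printed in Theorem~\ref{1324,123} versus the series displayed here; the brick count in the proof of Theorem~\ref{1324,123} (a fixed point with $2k$, respectively $2k+1$, bricks of size $1$ and $n-k$ bricks of size $2$ has $n+k$, respectively $n+k+1$, bricks and hence weight $(-y)^{n+k}$, respectively $(-y)^{n+k+1}$) confirms that the version in the present statement is the correct one.
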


\ \\ 
{\bf The proof of Theorem \ref{thm:1324p}.}
\ \\

Let $p \geq 5$ and $\Gamma_p  = \{1324\dots p,123\dots p-1\}$.
It follows from Lemma \ref{lem:keyGamma} that 
any brick in a fixed point of $I_{\Gamma_p}$ has 
size less than or equal to $p-2$. 

Let $(B,\sg)$ be a fixed point of $I_{\Gamma_p}$ where 
$B=(b_1, \ldots, b_t)$ and $\sg = \sg_1 \cdots \sg_n$. 
Suppose that $b_1 = k$ where $1 \leq k \leq p-2$. 
If $b_1=1$, then $\sg_1 =1$ and we can remove 
brick $b_1$ from $(B,\sg)$ and subtract 1 from 
the remaining elements to obtain a fixed point 
$O'$ of $I_{\Gamma_p}$ of length $n-1$.  It is 
easy to see that such fixed points contribute 
$-yU_{\Gamma_p,n-1}(y)$ to $U_{\Gamma_p,}(y)$.

Next assume that $2 \leq k \leq p-2$. 
First we claim that $1, \ldots, k-1$ must be in $b_1$. 
That is, since the minimal elements 
in the bricks increase, reading from 
left to right,  and the elements within each brick 
are increasing, it follows that the first element of 
brick $b_2$ is smaller than every element of $\sg$ to its right. 
Thus if there is an increase between bricks $b_1$ and 
$b_2$, it must be the case the elements in brick $b_1$ are 
the $k$ smallest elements. If there is a decrease 
between bricks $b_1$ and $b_2$, then there must be 
a $1324\dots p$-match that lies in the cells of $b_1$ and 
$b_2$ which must start at position $k-1$.  Thus 
$\sg_{k-1} < \sg_{k+1}$ which means that $\sg_1, \ldots, \sg_{k-1}$ 
must be the smallest $k-1$ elements. We then have 
two cases depending on the position of $k$ in $\sg$.\\ 
\ \\
{\bf Case 1.} $k$ is in the $k^{th}$ cell of $(B,\sg)$.\\
\ \\
In this case, if we remove the entire brick $b_1$ from $(B,\sg)$ and subtract $k$ from the numbers in the remaining cells, we will obtain  a fixed point $O'$ of $I_{\Gamma_p, n-k}.$ It then easily follows that fixed points in Case 1 will contribute $-yU_{\Gamma_p,n-k}(y)$ to $U_{\Gamma_p,n}(y)$.\\
\ \\
{\bf Case 2.} $k$ is in cell $k+1$ of $(B,\sg)$. \\
\ \\
In this case, it is easy to see that $k$ is in the first cell of the second brick in $(B,\sg)$ and there must be a $1324\dots p$-match between the cells of the first two bricks. This match must start from cell $k-1$ in $O$ with the numbers $k-1$ and $k$ playing the roles of $1$ and $2$, respectively, in 
the match. This forces the brick $b_2$ to have exactly $p-2$ cells. Thus  we have two subcases.\\  
\ \\
{\bf Subcase 2.a.} There is no $1324\dots p$-match in $(B,\sg)$ starting at cell $k+p-3$ \\
\ \\
In this case, we claim that $ \{\sg_1, \ldots, \sg_{k+p-2} \}  = \{1,\dots, k+p-2\}$.  That is, we know that the element in the first cell 
of brick $b_3$ is smaller than any of the elements of 
$\sg$ to its right.  Moreover, if there was a decrease between 
brick $b_2$ and $b_3$, then there must be a $1324\dots p$-match 
starting in cell $k+p-3$. Since we are assuming there is not 
such a match this means that there is an increase 
between bricks $b_2$ and $b_3$.  Since the last element of 
$b_2$ must be the largest element in either brick $b_1$ or $b_2$, it follows 
that $ \{\sg_1, \ldots, \sg_{k+p-2} \}  = \{1,\dots, k+p-2\}$.
This forces that 
$\sg_i =i$ for $i \leq k-1$, $\sg_{k}=k+1$, $\sg_{k+1} =k$, 
$\sg_{k+2} = k+2$, $\sg_i = i$ for $k+2 < i \leq k+p-2$. 
Hence, the first two bricks of $(B,\sg)$ are completely determined. It then follows that if we let $O'$ be the result by removing the first $k+p-2$ cells from 
$(B,\sg)$ and subtracting  $k+p-2$ from the numbers in the remaining cells, then $O'$ will be a fixed point in $\mathcal{O}_{\Gamma_p,n-k-(p-2)}$.  
It then easily follows that  fixed points in Subcase 2.a 
contribute $(-y)^2U_{\Gamma_p,n-k-(p-2)}(y)$ to $U_{\Gamma_p,n}(y)$.\\ 
\ \\
{\bf Subcase 2.b.} There is a $1324\dots p$-match in $(B,\sg)$ starting at cell $k+p-3$ \\
\ \\
In this case, it must be that $\sg_{k+p-3}< \sg_{k+p-1} < \sg_{k+p-2}$ so 
that there is a decrease between bricks $b_2$ and $b_3$. This 
means that the $1324\dots p$-match starting in cell 
$k+p-3$ must be contained in bricks $b_2$ and $b_3$. In particular, 
this means that $b_3 =p-2$. In general, suppose that the bricks $b_2, \dots, b_{m-1}$ all have exactly $p-2$ cells and let $c_i = k+(i-1)(p-2)-1$ for all $1 \leq i \leq m-1,$ so that $c_i$ is the second-to-last cell of brick $b_i.$ In addition, suppose there are $1324\dots p$-matches starting at cells $c_1,c_2,\dots, c_{m-1}$ but there are no $1324\dots p$-match starting at cell $c_{m} = k-(m-1)(p-2)-1$ in $O.$ We then have the situation pictured in Figure \ref{fig:132p} below.


\fig{132p}{A fixed point with $\Gamma_p$-matches starting at $c_{i}$ for $i = 1,\dots, m-1$.}

First, we claim that 
$\{\sg_1,\sg_2,\dots, \sg_{c_{m+1}}\} = \{1,2,\dots, c_{m+1}\}$. 
Since there is no $\Gamma_p$-match starting at $\sg_{c_m}$ in $\sg$, it 
cannot be that there is decrease between brick $b_{m}$ and 
$b_{m+1}$.  Because the minimal elements in the bricks of $B$ 
increase, reading from left to right, and the elements in each 
brick increase, it follows that $\sg_{c_{m}+2}$, which 
is the first element of brick $b_{m+1}$, is smaller than all 
the elements to its right. On the other hand, 
because there are $1324 \cdots p$-matches starting in $\sg$ starting 
at $c_1, \ldots, c_{m-1}$ it follows that $\sg_{c_{m}+1}$, 
which is last cell in brick $b_{m}$, is greater than all 
elements of $\sg$ to its left.   It follows that 
$\{\sg_1,\sg_2,\dots, \sg_{c_{m+1}}\} = \{1,2,\dots, c_{m+1}\}$.

Next we claim that we can prove by induction that  $\sg_{c_i} = c_i$ 
and $\{\sg_1, \ldots, \sg_{c_i}\} = \{1, \ldots, c_i\}$ for $1 \leq i \leq m$.
Our arguments above show that $\sg_i =i$ for $i=1, \ldots, k-1=c_1$. 
Thus the base case holds.  So assume that $\sg_{c_{j-1}} = c_{j-1}$, for $1 \leq i \leq j$, and $\{ \sg_1, \sg_2, \ldots, s_{c_{j-1}}   \} = \{1,2,\dots, c_{j-1}\}$.  Since there is a $132\cdots p$-match in $\sg$ 
starting at position $c_{j-1}$ and $p \geq 5$, 
it must be the case that all the numbers 
$\sg_{c_{j-1}}, \sg_{c_{j-1}+1}, \ldots, \sg_{c_{j-1}+p-3}$ are all less than 
$\sg_{c_j} = \sg_{c_{j-1}+p-2}.$ Since $\{\sg_1,\sg_2, \ldots, \sg_{c_{j-1}}   \} = \{1,2,\dots, c_{j-1}\}$, we must have $\sg_{c_j} \geq c_j.$ If $\sg_{c_j}>c_j$, then  let $d$ be the smallest number from $\{ 1,2,\dots, c_j \}$ that does not belong to the bricks $b_1, \dots, b_j.$ Since the numbers in a brick increase and the first cells of the bricks form an increasing sequence, it must be the case that $d$ is in the first cell of brick $b_{j+1},$ namely $\sg_{c_j+2} = d.$ We have two possibilities for $j$. 
\begin{enumerate}
\item If $j < m,$ then $\sg_{c_j +2} = d < c_j \leq \sg_{c_j}$. This contradicts the assumption that there is a $1324\dots p$-match starting from cell $c_j$ in $\sg$ for $\sg_{c_j}$ needs to play the role of 1 in such a match.

\item If $j = m,$ then there is a descent between the bricks $b_m$ and $b_{m+1}$ and there must be a $1324\dots p$-match that lies entirely in the cells of $b_m$ and $b_{m+1}$ in $O.$ However, the only possible match must start from cell $c_m,$ the second-to-last cell in $b_m.$ This contradicts our assumption that there is no match starting from cell $c_m$ in $O.$
\end{enumerate} 
Hence, 
$\sg_{c_j} = c_j$ and $\{\sg_1, \sg_2, \ldots, \sg_{c_j} \} = \{1,2,\dots, c_j\}.$ for $1 \leq j \leq m$. 

We claim that the values of $\sg_i$ are forced for 
$i \leq c_m+1$.  That is, consider the 
first $1324\cdots p$-match starting at position 
$k-1$.  Since $p \geq 5$, we know that $\sg_{k+p-2} =k+p-2 > \sg_{k+2}$. 
This forces that $\sg_{k} = k+1$, $\sg_{k+1} =k$, $\sg_{k+2} =k+2$ 
so that the values of $\sg_i$ for $i \leq k+p-2$.  This type of 
argument can be repeated for all the remaining $1324\cdots p$-matches 
starting at $c_2, \ldots, c_{m-1}$. 
Thus if we remove the first  $k+(m-1)(p-2)$ cells of $O$, we obtain a fixed point $O'$ of $I_{\Gamma_p}$ in $\mathcal{O}_{\Gamma_p, n - k - (m-1)(p-2)}$. 
On the other 
hand, suppose that we start with a fixed point $(D,\tau)$ of $I_{\Gamma_p}$ in 
 $\mathcal{O}_{\Gamma_p, n - k - (m-1)(p-2)}$ where 
$D=(d_1, \ldots,d_r)$ and $\tau = \tau_1, \ldots, \tau_{n - k - (m-1)(p-2)}$.
Let $\overline{\tau} = \overline{\tau}_1 \cdots \overline{\tau}_{n - k - (m-1)(p-2)}$ be the result of adding $n - k - (m-1)(p-2)$ to every element of 
$\tau$. Then it is easy to see that $(B,\sg)$ is a fixed point 
of $I_{\Gamma_p}$, where 
$B =(k,(p-2)^m,d_1, \ldots,d_r)$ and 
$\sg = \sg_1 \cdots \sg_{k+(m-1)p-2}\overline{\tau}$ where 
$\sg_1 \cdots \sg_{k+(m-1)(p-2)}$ is the unique permutation 
in $\mathfrak{S}_{k+(m-1)(p-2)}$ with $1324\cdots p$-matches starting 
at positions $c_1, \ldots, c_{m-1}$. It follows that the contribution of 
the fixed points in Case 2.b to $U_{\Gamma_p,n}(y)$ is $\sum_{m \geq 3}(-y)^mU_{\Gamma_p, n-k-(m-1)(p-2)(y)}$.

Hence, for any fixed point $O_k$ that has $k$ cells in the first brick, for $1\leq k \leq p-2,$ the contribution of $O_k$ to $U_{\Gamma_p,n}(y)$ is 
\[ \displaystyle (-y)U_{\Gamma_p,n-k}(y) + \sum_{m=2}^{\lfloor\frac{n-k}{p-2}\rfloor }(-y)^{m}U_{\Gamma_p, n-k-(m-1)(p-2)}(y).   \]

Therefore, we obtain the following recursion for $U_{\Gamma_p,n}(y)$ 
\begin{align*}
\displaystyle  U_{\Gamma_p,n}(y) &= \sum_{k=1}^{p-2}(-y)U_{\Gamma_p,n-k}(y) + \sum_{k=1}^{p-2}\sum_{m=2}^{\lfloor\frac{n-k}{p-2}\rfloor }(-y)^{m}U_{\Gamma_p, n-k-(m-1)(p-2)}(y). 
\end{align*} 
This completes the proof of Theorem \ref{thm:1324p}. \qed

\section{Conclusion and Problems for Future Research} 

In this paper, we have shown that the reciprocal method introduced by Jones and Remmel in \cite{JR1} can be extended to a family $\Gamma$ whose permutations all start with 1 and have at most one descent. Specifically, we have proved if $$\Gamma = \Gamma_{k_1,k_2} =  \{\sigma \in \mathfrak{S}_p: \sigma_1=1, \sigma_{k_1+1}=2, \sigma_1 < \sigma_2< \cdots<\sigma_{k_1}~ \&~\sigma_{k_1+1} < \sigma_{k_1+2}< \cdots<\sigma_{p} \}$$ where $k_1,k_2 \geq 2$, 
$\Gamma = \Gamma_{k_1,k_1,s} = \Gamma_{k_1,k_2} \cup \{1 \cdots s(s+1)\}$ 
where $s \geq k_1\geq 2$, or   $\Gamma = \Gamma_{p} = \{1324\cdots p , 123 \cdots p-1\}$ where $p \geq 4$, then the polynomials $U_{\Gamma, n}(y)$ satisfy simple recursions and these recursions can be used to compute the terms in the generating function 
\begin{equation*} 
\mbox{NM}_{\Gamma}(t,x,y)= \sum_{n \geq 0} \frac{t^n}{n!} \sum_{\sg \in \mathcal{NM}_n(\Gamma)}x^{\LRmin{\sg}}y^{1+\des(\sg)}.
\end{equation*} 

From the values of the polynomials $U_{\Gamma,n}(y)$ computed through Mathematica, we conjecture that the polynomials $U_{\Gamma,n}(y)$ are log-concave for $\Gamma = \{1324,1423\}$ and $\Gamma = \{1324\cdots p, 123\cdots p\}$, where $p \geq 4$. 
However, the polynomials $U_{\Gamma_{k_1,k_2},n}(-y)$ are not always log-concave when 
$k_1$ is larger than $k_2$.

The next set of problems to consider is to show that the same machinery can be extended to families $\Gamma$ of  permutations which all start with 1 
but may have more than one descent. This type of problem 
in the case where $\Gamma$ consists of single permutation 
$\tau$ was first mentioned by Jones and Remmel in \cite{JR3}, where the authors gave a recursion for the polynomial $U_{\tau,n}(y)$ for $\tau = 15243.$

The main problem when the permutations in a family  $\Gamma$ are allowed to 
have more than one descent is that the mapping $I_{\Gamma}$ defined 
in Section 3 is no longer an involution. To see this, suppose the permutations in $\Gamma$ have more than one descents and consider the case where we have a decrease between the last cell of brick $b_{i-1}$ and the first cell of brick $b_i$, but we are unable to combine them since there is a $\Gamma$-match that involves the cells of bricks $b_{i-1}$ and $b_i.$ In this case, brick $b_i$ will have at least one cell labeled with $y$. According to the current mapping, we will try to split  
brick $b_i$ after some cell $c$ labeled with $y$ into two bricks: $b'$, containing all the cells of 
$b_i$ up to and including $c$, and $b''$, containing all the remaining 
cells of $b_i$. Then, we will be able to combine 
$b'$ with $b_{i-1}$ because there is still a decrease 
between $b_{i-1}$ and $b'$ but now there is no $\Gamma$-match 
that lies in the cells of $b_{i-1}$ and $b'$. This means 
that we cannot use cell $c$ in a definition of an involution. 
Thus we must restrict ourselves to cells $c$ labeled with $y$ 
which do not have this property. The result of this 
restriction is that the fixed points are more complicated 
than before.  In particular, we can no longer guarantee 
that if $(B,\sg)$ is a fixed point of such an involution,  
then $\sg$ is increasing in the bricks of $B$. Nevertheless 
one can analyze the fixed points of such an involution 
for certain simple permutations $\tau$ and simple 
families of permutations $\Gamma$. 
For example, we can prove the following results. 

\begin{theorem}
For $\tau = 1432,$ $U_{\tau,1}(y)=-y$, and for $n \geq 2,$
\begin{align*}
\displaystyle U_{\tau,n}(y)  = & ~  (1-y)U_{\tau,n-1}(y) - y^2 \binom{n-2}{2}U_{\tau, n-3}(y).  
\end{align*}
\end{theorem}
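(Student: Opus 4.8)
The plan is to run the machinery of Sections~2--3 with $\Gamma=\{\tau\}=\{1432\}$. Define the homomorphism $\Theta_\tau$ by $\Theta_\tau(e_0)=1$ and $\Theta_\tau(e_n)=\frac{(-1)^n}{n!}\mbox{NM}_{\tau,n}(1,y)$, so that $U_{\tau,n}(y)=n!\,\Theta_\tau(h_n)=\sum_{O\in\mathcal{O}_{\tau,n}}\sgn{O}W(O)$ by \eqref{eq:basic2}; note \eqref{eq:basic2} does not use the hypothesis that the elements of $\Gamma$ have at most one descent. As the Conclusion explains, for $\tau=1432$ the map $I_\Gamma$ of Section~3 is not an involution, because splitting a brick $b_j$ at an interior $y$-cell $c$ can kill the $1432$-match that blocked a merge of $b_j$ with $b_{j-1}$. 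The first step is therefore to replace $I_\Gamma$ by the map $I^{*}$ that reads $(B,\sg)$ left to right and acts on the first cell $c$ which is either \textup{(ii)} the last cell of a brick $b_i$ with $\sg_c>\sg_{c+1}$ and no $1432$-match lying in the cells of $b_i$ and $b_{i+1}$ (combine $b_i,b_{i+1}$, as before), or \textup{(i$'$)} a cell labeled $y$ inside a brick $b_j$ such that either $b_j$ is the first brick, or there is an increase between $b_{j-1}$ and $b_j$, or there is a decrease between $b_{j-1}$ and $b_j$ \emph{and} a $1432$-match lying in the cells of $b_{j-1}$ together with the cells of $b_j$ up to and including $c$ (break $b_j$ after $c$ and relabel $c$ with $-y$). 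The main technical point here is to check that $I^{*}$ is a sign-reversing, weight-preserving involution; the verification follows the two-case pattern of Section~3, the new ingredient being that a $y$-cell which is skipped because it is ``unsafe'' to split remains unsafe after any later combine or split, so $I^{*}$ never revisits it.

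The second step is to describe the fixed points of $I^{*}$. Exactly as in Section~3, using that the unique element of $\Gamma$ starts with $1$, one obtains $\sg_1=1$ and that the minimal elements of the bricks of a fixed point increase from left to right; moreover the first brick of a fixed point carries no $y$-cell (such a cell would be ``safe'' to split, there being no brick to its left), so the first brick is increasing. I would then split the analysis according to the position of the value $2$: (Case~1) $2$ lies in $b_1$; (Case~2) $b_1=\{1\}$ is a singleton; (Case~3) $2$ does not lie in $b_1$ and $|b_1|\geq2$. These three cases partition the fixed points.

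In Case~1, $b_1$ is increasing with $\sg_1=1$, so $\sg_2=2$, and the value $1$ belongs to no $1432$-match of $\sg$ (its only possible match would start at cell $1$, which is impossible since $\sg_2=2<\sg_3$); hence deleting cell $1$ and reducing is a weight-preserving bijection from the Case~1 fixed points onto all fixed points of size $n-1$, contributing $U_{\tau,n-1}(y)$. In Case~2, deleting the brick $\{1\}$ and reducing is a bijection onto all fixed points of size $n-1$ that drops exactly one brick, contributing $-yU_{\tau,n-1}(y)$. Case~3 is the heart of the proof: since $\min b_2=2$ while the first cell of $b_2$ exceeds $1$, one first shows $b_2$ cannot be increasing, hence $b_2$ has an internal descent, necessarily at its first cell; the forced $1432$-match straddling $b_1$ and $b_2$ and involving their boundary cells, together with the fact that the minimal elements of the bricks increase, then forces $|b_1|=2$, $\sg=1\,v\,w\,2\cdots$ with $2<w<v$, and the match occupying cells $1,2,3,4$. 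Deleting cells $1,2,3$ and reducing is a bijection from the Case~3 fixed points onto the set of pairs consisting of a two-element subset $\{w,v\}$ of $\{3,\dots,n\}$ and a fixed point of size $n-3$; since it removes one brick and one $y$-labelled cell, the Case~3 fixed points contribute $-y^{2}\binom{n-2}{2}U_{\tau,n-3}(y)$. Combining the three cases with the base value $U_{\tau,1}(y)=-y$ (the single fixed point on one cell) yields the claimed recursion. I expect the main obstacle to be Case~3: proving that $b_2$ must have its internal descent at its first cell, that this rigidly determines $\sg_1\sg_2\sg_3\sg_4$, and that deleting $\sg_1,\sg_2,\sg_3$ both lands in and sweeps out all fixed points of size $n-3$, all hinge on a careful understanding of how a single $1432$-occurrence can span two bricks --- precisely the configuration that defeats the naive involution.
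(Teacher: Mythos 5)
The paper does not actually prove this statement: it appears in Section 5 among results announced to ``appear in subsequent papers,'' so there is no in-paper proof to compare against. What the Conclusion does supply is the intended strategy --- restrict the splitting move of $I_\Gamma$ to $y$-cells at which the left piece of the split brick cannot be absorbed into the preceding brick --- and your proposal is a correct implementation of exactly that strategy. Your safety condition in case (i$'$) is the right one, your check that unsafe cells stay unsafe makes $I^{*}$ an involution, and your three cases do partition the fixed points and contribute $U_{\tau,n-1}(y)$, $-yU_{\tau,n-1}(y)$ and $-y^{2}\binom{n-2}{2}U_{\tau,n-3}(y)$, which sum to the stated recursion; I also verified the recursion against direct computation for $n\leq 4$. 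The one place where your outline leans on Section 3 more than it is entitled to is the claim that $\sg_1=1$ and that the brick minima increase ``exactly as in Section~3'': that argument uses that every brick is increasing, which fails here. You need to first establish the structural fact implicit in your Case 3, namely that for $j\geq 2$ a decrease between $b_{j-1}$ and $b_j$ forces the unique straddling $1432$-match to occupy cells $d_{j-1}-1,\,d_{j-1},\,c_j,\,c_j+1$ (the other conceivable placement would put a descent at the next-to-last cell of $b_{j-1}$, which the induction rules out), whence every non-initial brick is either increasing or has its only descent at its first cell and every later $y$-cell of that brick is safe. With that in hand the minima of consecutive bricks are compared via $\sg_{c_j+1}>\sg_{d_{j-1}-1}\geq \min b_{j-1}$ in the decrease case and $\sg_{c_j}>\sg_{d_{j-1}}\geq \min b_{j-1}$ in the increase case, and the rest of your argument --- including the rigidity $\sg=1\,v\,w\,2\cdots$ with the match on cells $1$--$4$, and the check that deleting cells $1,2,3$ neither destroys a match needed at the $b_2,b_3$ boundary nor changes the safety of any surviving $y$-cell --- goes through as you describe. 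So the proposal is correct once that structural lemma is written out explicitly.
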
 

\begin{theorem}
For $\tau = 142536,$ $U_{\tau,1}(y)=-y$, and for $n \geq 2,$
\begin{align*}
\displaystyle U_{\tau,n}(y) = &~~~  (1-y)U_{\Gamma,n-1}(y) +  \sum_{k=1}^{\Floor[(n-2)/6]} H_{2k}y^{3k}U_{n-6k-1}(y) \\
& ~~~\qquad  - \sum_{k=1}^{\Floor[n/6]} H_{2k-1}y^{3k-1}\left[U_{\tau,n-6k+2}(y) +yU_{\tau,n-6k+1}(y) \right]
\end{align*}
where $H_i$ is the determinant the matrix of Catalan numbers, given by the following formulas.
\begin{equation*} H_{2k-1} =\begin{vmatrix}
C_2 & C_5 & C_8 & C_{11} & \cdots & C_{3k-4} & C_{3k-2} \\ 
-1 & C_2 & C_4 & C_8 & \cdots & C_{3k-7} & C_{3k-5} \\ 
0 & -1 & C_2 & C_5 & \cdots & C_{3k-10} & C_{3k-8} \\ 
0 & 0 & -1 & C_2 & \cdots & C_{3k-13} & C_{3k-11} \\ 
\vdots & \vdots & \vdots & \vdots &  & \vdots & \vdots \\ 
0 & 0 & 0 & 0 & \cdots & C_2 & C_4 \\ 
0 & 0 & 0 & 0 & \cdots & -1 & 1
\end{vmatrix} , \mbox{~and~} 
\end{equation*}

\begin{equation*} H_{2k} = \begin{vmatrix}
C_2 & C_5 & C_8 & C_{11} & \cdots & C_{3k-4} & C_{3k-1} \\ 
-1 & C_2 & C_5 & C_8 & \cdots & C_{3k-7} & C_{3k-4} \\ 
0 & -1 & C_2 & C_5 & \cdots & C_{3k-10} & C_{3k-7} \\ 
0 & 0 & -1 & C_2 & \cdots & C_{3k-13} & C_{3k-10} \\ 
\vdots & \vdots & \vdots & \vdots &  & \vdots & \vdots \\ 
0 & 0 & 0 & 0 & \cdots & C_2 & C_5 \\ 
0 & 0 & 0 & 0 & \cdots & -1 & C_2
\end{vmatrix}.
\end{equation*}
\end{theorem}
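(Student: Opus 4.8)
The plan is to carry the reciprocity machinery of Section 3 one step beyond the hypothesis $\des(\tau)\le 1$, specialized to $\tau = 142536$, whose two descents sit at positions $2$ and $4$. As noted in the discussion above, the map $I_\tau$ of Section 3 is no longer an involution once $\des(\tau)>1$, so the first step is to repair it. Following the recipe indicated there, one defines $\widetilde I_\tau$ to act not at the leftmost $y$-labelled cell but at the leftmost $y$-labelled cell $c$ that is \emph{safe}, meaning that splitting the brick containing $c$ immediately after $c$ does not create a configuration whose left piece can be recombined with the preceding brick; on a pair $(B,\sg)$ having no safe $y$-cell and no admissible right-end merge, $\widetilde I_\tau$ is the identity. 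One then verifies, by a case analysis that is longer than in the $\des\le 1$ setting but uses only that $\tau$ has length $6$ and that its two descents are two positions apart, that $\widetilde I_\tau$ is a sign-reversing, weight-preserving involution on $\mathcal{O}_{\tau,n}$, so that
\[
U_{\tau,n}(y) = \sum_{O\in\mathcal{O}_{\tau,n},\ \widetilde I_\tau(O)=O}\sgn{O}\,W(O).
\]

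Next I would classify the fixed points of $\widetilde I_\tau$. Just as in Lemma \ref{lem:keyGamma}, the first entries of the bricks increase from left to right, and between two consecutive bricks there is either an increase or else a decrease forced by a $\tau$-match straddling the two bricks. The new phenomenon is that a brick may now contain descents, but only inside a maximal \emph{chain} of overlapping $\tau$-matches; since two consecutive occurrences of $142536$ overlap in exactly two cells, a chain of $j$ matches occupies $2j+4$ cells, and once the sizes of the bricks constituting the chain are prescribed the relative order of all its entries is rigid. Exactly as in the proofs of Theorems \ref{thm:1324123} and \ref{thm:1324p}, the structure at the start of a fixed point is governed by the position of $2$: either $2$ lies in cell $2$, and deleting the first cell gives a bijection with fixed points of size $n-1$ contributing $(1-y)U_{\tau,n-1}(y)$; or $2$ is pushed rightward by a chain of $\tau$-matches beginning at cell $1$, in which case the set of values covered by that chain, and hence the values themselves, are forced, so deleting the chain leaves a smaller fixed point and---in contrast with the $\tau=1432$ case treated just above---no binomial coefficient appears.

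The heart of the matter is to count, for each admissible chain length, the number of distinct fixed-point configurations a chain of that length can realize, weighted by its bricks and its internal descents. Here the relevant enumeration reduces, by a transfer-matrix argument (equivalently, a Lindstr\"{o}m--Gessel--Viennot count of non-crossing Dyck-type paths coming from the two interleaved increasing strands of $142536$), to the Hessenberg determinant of Catalan numbers written in the statement, the $-1$'s on the subdiagonal being the usual first-return correction. Expanding this determinant along its last column, and tracking the parity invariant of the chain that separates the contributions into the $H_{2k}$-sum and the $H_{2k-1}$-sum---and that fixes the shifts $n-6k-1$ and $n-6k+2,\ n-6k+1$ together with the signs $\pm$---then yields precisely the displayed recursion once the two branches are combined; the base case $U_{\tau,1}(y)=-y$ is immediate.

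The main obstacle I anticipate is twofold. First, proving that the repaired map $\widetilde I_\tau$ really is an involution: with two descents there are several new ways in which a split can fail to be undone by the companion merge, and ruling these out requires exploiting the specific spread-out shape of $142536$---in particular that no two of its descents can be packed into a single $\tau$-match lying in a neighbouring brick. Second, pinning the chain count down to exactly the matrices $H_{2k-1}$ and $H_{2k}$: identifying the correct non-intersecting path model, and checking that the Lindstr\"{o}m--Gessel--Viennot determinant collapses to those precise arrays---including the placement of the $-1$'s and of the entries $C_{3j-1}, C_{3j-2}, C_{3j-4}$---is where the bulk of the bookkeeping lies.
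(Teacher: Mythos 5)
The paper itself contains no proof of this statement: the theorem is stated in Section 5 (``Conclusion and Problems for Future Research''), immediately after the authors explain that the map $I_{\Gamma}$ of Section 3 fails to be an involution when a pattern has more than one descent and must be restricted to cells whose split cannot be undone by a merge with the preceding brick; the recursion for $\tau=142536$ (together with those for $1432$, $162534$, and $\{14253,15243\}$) is then announced with the remark that ``these results will appear in subsequent papers.'' So there is no proof to compare yours against, and the question is whether your proposal stands on its own.

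It does not, though it is consistent with the strategy the authors sketch. What you have written is a plan whose two load-bearing steps are exactly the ones you defer to the final paragraph. First, you never actually establish that the repaired map $\widetilde{I}_\tau$ is a sign-reversing, weight-preserving involution, nor do you state the replacement for Lemma \ref{lem:keyGamma}: once bricks may contain internal descents (which the paper warns is unavoidable here), the characterization of fixed points --- increasing bricks, increasing first elements, forced straddling matches --- breaks down, and your ``chain of overlapping $\tau$-matches'' description is asserted rather than derived from the fixed-point conditions. Second, the entire numerical content of the theorem lives in the claim that the weighted chain count equals the Hessenberg determinants $H_{2k-1}$ and $H_{2k}$, with the precise parity split, the shifts $n-6k-1$ versus $n-6k+2$ and $n-6k+1$, and the signs; you invoke a transfer-matrix or Lindstr\"om--Gessel--Viennot argument but exhibit no path model, no weight assignment, and no computation, and you do not check even one nontrivial case (say $n=6,7,8$, which would already test the first determinant entries). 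Until those two steps are carried out, the recursion is unverified; as it stands the proposal identifies the right difficulties but does not overcome them.
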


\begin{theorem}
For $\tau = 162534,$ $U_{\tau,1}(y)=-y$, and for $n \geq 2,$
\begin{align*}
\displaystyle U_{\tau,n}(y)  = & ~  (1-y)U_{\tau,n-1}(y)  - \sum_{k = 1}^{\Floor[n/6]}y^{3k-1}\binom{n-3k-1}{3k-1}U_{\tau, n-6k+1}(y) \\
& \qquad \quad +\sum_{k = 1}^{\Floor[(n-2)/6]}y^{3k}\binom{n-3k-2}{3k}U_{\tau, n-6k-1}(y).  
\end{align*}
\end{theorem}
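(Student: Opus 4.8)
The plan is to run the extended reciprocity method of Section 3, but with a modified involution adapted to the fact that $\tau=162534$ has two descents, at positions $2$ and $4$. As in Section 3, define $\Theta_\tau(e_0)=1$ and $\Theta_\tau(e_n)=\frac{(-1)^n}{n!}\mbox{NM}_{\tau,n}(1,y)$ for $n\geq 1$, so that $U_\tau(t,y)=\Theta_\tau(H(t))$ and $U_{\tau,n}(y)=n!\,\Theta_\tau(h_n)=\sum_{O\in\mathcal{O}_{\tau,n}}\sgn{O}\,W(O)$, where $\mathcal{O}_{\tau,n}$ is the set of filled labeled brick tabloids of Section 3. The base case $U_{\tau,1}(y)=-y$ is immediate, since $\mathcal{O}_{\tau,1}$ has a single element, of sign $-1$ and weight $y$.

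Next I would define a modified involution $\widehat{I}_\tau$ on $\mathcal{O}_{\tau,n}$. Call a cell $c$ that is labeled $y$ and lies in brick $b_j$ \emph{splittable} if breaking $b_j$ at $c$ into $b_j'$ (the cells of $b_j$ up to and including $c$) and $b_j''$ does not subsequently allow one to merge $b_j'$ with $b_{j-1}$; equivalently, $j=1$, or there is an increase between $b_{j-1}$ and $b_j$, or there is a $\tau$-match lying in the cells of $b_{j-1}$ and $b_j'$. Reading the cells of $(B,\sg)$ from left to right, let $c$ be the first cell which is either (i) a splittable $y$-cell, or (ii) the last cell of a brick $b_i$ with $\sg_c>\sg_{c+1}$, with no $\tau$-match in the cells of $b_i$ and $b_{i+1}$, and such that either $i=1$, or there is an increase between $b_{i-1}$ and $b_i$, or there is a $\tau$-match in the cells of $b_{i-1}$ and $b_i$. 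In case (i) split $b_j$ at $c$ and relabel $c$ by $-y$; in case (ii) merge $b_i$ and $b_{i+1}$ and relabel $c$ by $y$; if neither occurs, set $\widehat{I}_\tau(B,\sg)=(B,\sg)$. The extra clauses are arranged exactly so that a split at a splittable cell is undone by a merge and conversely. Verifying $\widehat{I}_\tau^2=\mathrm{id}$ is the delicate step flagged in the discussion above; it uses crucially that every pattern has at most two descents, so that splitting or merging a brick cannot create a second descent so far to the left that it moves the first actionable cell.

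I would then classify the fixed points of $\widehat{I}_\tau$. A fixed point $(B,\sg)$ has no splittable $y$-cell, so its first brick is increasing, $\sg_1=1$, and, as in Lemma \ref{lem:keyGamma}, the minimal elements of the bricks increase from left to right. A descent inside a brick $b_j$ can occur only when $b_j$ (with $j\geq 2$) is glued to $b_{j-1}$ by a $\tau$-match, and since $162534$ has descents only at positions $2$ and $4$ this forces a rigid local shape for the bricks involved in such a match; in particular one checks that consecutive such matches can overlap in only one way, so that they occur in maximal \emph{chains}. Thus every fixed point falls into one of two families: those in which $1$ is not part of any $\tau$-match, and those in which $1$ is the first entry of a maximal chain of $k\geq 1$ overlapping $162534$-matches, which in turn splits into two types according to whether the chain terminates just after a position-$2$ descent or a position-$4$ descent of its last match.

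To finish, I would extract the recursion by bookkeeping contributions. By the argument used in the proof of Theorem \ref{thm:1-2-gen}, the fixed points in which $1$ is not part of a $\tau$-match contribute $(1-y)U_{\tau,n-1}(y)$ (the $-y$ coming from a length-$1$ first brick and the $+1$ from absorbing $\sg_1=1$ into a longer first brick). For the chain family, removing an appropriate initial segment of $(B,\sg)$ leaves a fixed point of size $n-6k+1$ (first type) or $n-6k-1$ (second type); accounting for the bricks and internal descents of the chain, the removed part contributes a factor $-y^{3k-1}$ or $+y^{3k}$, together with $\binom{n-3k-1}{3k-1}$ or $\binom{n-3k-2}{3k}$ fillings of its non-forced cells. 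Summing over $k$ in $1\leq k\leq\lfloor n/6\rfloor$ and $1\leq k\leq\lfloor(n-2)/6\rfloor$, respectively, yields the two correction sums. Beyond verifying that $\widehat{I}_\tau$ is an involution, the main obstacle is this chain analysis: isolating the two termination types, determining precisely which of the roughly $6k$ cells of a $k$-chain are forced and which are free — this is what produces the $3k$ and $3k-1$ in the exponents and binomials — and counting the chain patterns, which I expect will require an auxiliary bijection with a family of lattice paths in the spirit of the Dyck-path bijections used above for $\Gamma=\{1324,123\}$.
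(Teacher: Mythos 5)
First, a point of reference: the paper does not actually prove this theorem. It appears in Section 5 among several results whose proofs the authors explicitly defer to ``subsequent papers,'' so there is no in-paper argument to compare yours against. Your overall strategy --- replace $I_\tau$ by a map that acts only on $y$-cells whose splitting cannot be undone by an immediate merge with the preceding brick --- is exactly the strategy the paper itself sketches in Section 5 for patterns with more than one descent, so the framework you choose is surely the intended one, and your base case and the general setup via $\Theta_\tau$ are fine.

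That said, what you have written is a plan rather than a proof, and the steps you defer are precisely the content of the theorem. (1) You assert but do not verify that $\widehat{I}_\tau^2=\mathrm{id}$; the remark that splitting or merging ``cannot create a second descent so far to the left'' is not an argument, and checking that the first actionable cell is preserved under both operations is exactly the delicate point the paper warns about. (2) The classification of fixed points into ``chains'' of overlapping $162534$-matches is asserted, not derived: you would need to determine exactly which overlaps of two $162534$-matches are possible, where the non-splittable $y$-cells can sit relative to the brick boundaries (the two descents of $162534$ are at positions $2$ and $4$, and which of them lands on a boundary versus inside a brick changes the local shape), and why exactly two termination types arise. (3) The bookkeeping that is supposed to produce $-y^{3k-1}\binom{n-3k-1}{3k-1}U_{\tau,n-6k+1}(y)$ and $+y^{3k}\binom{n-3k-2}{3k}U_{\tau,n-6k-1}(y)$ is entirely absent. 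Here there is a concrete warning sign: a removed initial prefix consisting of $a$ complete bricks with $b$ internal descents factors out $(-1)^ay^{a+b}$, so the first sum needs $a$ odd with $a+b=3k-1$ and the second needs $a$ even with $a+b=3k$, and moreover $k$ matches of length $6$ cannot be confined to a prefix of only $6k-1$ cells, so (as in Subcase 2 of the proof of Theorem \ref{thm:1-2-gen}) the reduction must land on fixed points of size $n-6k+1$ satisfying a side condition on their first brick, which must then be unwound into a signed combination of the $U_{\tau,m}(y)$'s. None of this appears, and your closing sentence concedes that the chain analysis, the identification of forced versus free cells, and the count of chain patterns remain to be done. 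So the proposal identifies the right method but has genuine gaps at every step that is specific to $\tau=162534$.
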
 

\begin{theorem}
For $\Gamma = \{14253,15243\},$ $U_{\Gamma,1}(y)=-y$, and for $n \geq 2,$
\begin{align*}
\displaystyle  U_{\Gamma,n}(y) & = (1-y)U_{\Gamma,n-1}(y)  -y^2(n-3)\left(U_{\Gamma, n-4}(y) +(1-y)(n-5)U_{\Gamma, n-5}(y)  \right)\\
& \qquad \qquad \qquad \qquad \qquad -y^3(n-3)(n-5)(n-6)U_{\Gamma,n-6}(y). 
\end{align*}
\end{theorem}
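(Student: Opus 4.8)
The plan is to push the extended reciprocity method of Section~3 through for $\Gamma=\{14253,15243\}$; the obstruction is that both patterns have two descents, so the mapping $I_{\Gamma}$ of Section~3 is not an involution and must be repaired as sketched in Section~5. It is convenient to first note that $\Gamma$ is exactly the set of $\sg\in\mathfrak{S}_5$ with $\sg_1=1$, $\sg_3=2$, $\sg_5=3$; equivalently $\sg$ has a $\Gamma$-match starting at position $i$ iff $\sg_i<\sg_{i+2}<\sg_{i+4}<\min(\sg_{i+1},\sg_{i+3})$, so a $\Gamma$-match is an ``interleaved'' block whose odd relative positions carry the three smallest of its five values in increasing order and whose even relative positions carry the two largest in either order.

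I would modify $I_{\Gamma}$ as follows. Call a $y$-labeled cell $c$ inside a brick $b_i$ of $(B,\sg)$ \emph{frozen} if $i\ge 2$, there is a decrease between $b_{i-1}$ and $b_i$, and every $\Gamma$-match of $\sg$ contained in the cells of $b_{i-1}$ and $b_i$ uses a cell of $b_i$ strictly to the right of $c$; otherwise call $c$ \emph{active}. Redefine $I_{\Gamma}(B,\sg)$ by scanning cells from left to right for the first cell $c$ that is either (i) an active $y$-cell, in which case we split its brick just after $c$ and relabel $c$ with $-y$, or (ii) the last cell of a brick $b_i$ with $\sg_c>\sg_{c+1}$ and no $\Gamma$-match contained in the cells of $b_i$ and $b_{i+1}$, in which case we merge $b_i$ with $b_{i+1}$ and relabel $c$ with $y$; if no such $c$ exists, $I_{\Gamma}$ fixes $(B,\sg)$. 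The first task is to show this $I_{\Gamma}$ is a sign-reversing, weight-preserving involution on $\mathcal{O}_{\Gamma,n}$: neither a split at an active cell nor a merge in case (ii) changes whether a cell to its left is active or frozen or whether a boundary to its left is a case-(ii) site; after a split at an active $c$, the two halves of the broken brick can always be re-merged (no brick of an element of $\mathcal{O}_{\Gamma,n}$ contains a $\Gamma$-match), while $c$ cannot be merged with the preceding brick --- this is exactly what ``active'' guarantees; and after a merge in case (ii) at $c$, the cell $c$ becomes an active $y$-cell of the merged brick. I expect choosing the notion of ``frozen'' so that these last two statements hold simultaneously to be the main obstacle.

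Once $I_{\Gamma}$ is an involution, the reasoning behind \eqref{eq:KEY} yields $U_{\Gamma,n}(y)=\sum_{O:\,I_{\Gamma}(O)=O}\sgn{O}W(O)$, so it remains to classify and sum over the fixed points. A fixed point $(B,\sg)$ with $B=(b_1,\dots,b_k)$ must have $b_1$ increasing; for $i\ge 2$, every internal descent of $b_i$ frozen (hence strictly left of the last cell of some $\Gamma$-match contained in $b_{i-1}\cup b_i$), with the boundary between $b_{i-1}$ and $b_i$ a decrease whenever $b_i$ has an internal descent; and between consecutive bricks either an increase or a decrease together with a $\Gamma$-match contained in those two bricks. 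Arguing as in the proof of Lemma~\ref{lem:keyGamma} (using that both patterns begin with $1$), the leftmost entries of the bricks still increase from left to right, so $\sg_1=1$, and since the value $2$ occupies relative position $3$ in each pattern, $2$ lies either in cell $2$ or in the first cell of $b_2$ with a $\Gamma$-match starting at cell $1$ (forcing $b_1=\{1,\sg_2\}$ of size exactly $2$).

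I would then split the sum in the spirit of the proof of Theorem~\ref{thm:1324123}. If $2$ is in cell $2$, then $1$ lies in no $\Gamma$-match, deletion of cell $1$ is a bijection onto fixed points of size $n-1$, and the subcases $b_1=\{1\}$ and $b_1$ larger contribute $-y\,U_{\Gamma,n-1}(y)$ and $U_{\Gamma,n-1}(y)$, for a total of $(1-y)U_{\Gamma,n-1}(y)$ (and $U_{\Gamma,1}(y)=-y$ since the only fixed point of size $1$ is one brick). If there is a $\Gamma$-match starting at cell $1$, then $b_1=\{1,\sg_2\}$, $b_2$ begins with $2$, the match occupies cells $1,\dots,5$, and the interleaving forces $\sg_3=2$, $\sg_5=3$ and a single frozen descent inside $b_2$; one must then determine how far the forced initial block extends before it may be followed by an arbitrary fixed point. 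Because $\Gamma$ contains no identity permutation but the interleaved structure cannot be continued indefinitely without creating an active cell, this block has one of a bounded set of lengths, and in each case the ``large'' entries of the block ($\sg_2$, $\sg_4$, and in the longest case one more) may be filled, in either relative order, by any of the values exceeding the small entries already placed; this produces the polynomial multiplicities $n-3$, $(n-3)(n-5)$, and $(n-3)(n-5)(n-6)$. Collecting the block contributions with their signs and weights gives the terms $-y^2(n-3)U_{\Gamma,n-4}(y)$, $-y^2(1-y)(n-3)(n-5)U_{\Gamma,n-5}(y)$, and $-y^3(n-3)(n-5)(n-6)U_{\Gamma,n-6}(y)$, which is the stated recursion. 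The detailed bookkeeping in this last step --- pinning down exactly which cells of the block are forced, checking that deletion and its inverse are well defined, and verifying the counts --- is the longest portion, but routine once the fixed-point structure is established.
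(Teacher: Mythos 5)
The paper does not actually prove this theorem: it appears in Section~5 among the results announced to ``appear in subsequent papers,'' so there is no in-paper argument to compare against, and your proposal must stand on its own. Its framing is sound as far as it goes --- $\Gamma$ is indeed the set of $\sg\in\mathfrak{S}_5$ with $\sg_1=1,\sg_3=2,\sg_5=3$; the first entries of bricks in a fixed point still increase; $2$ must sit in cell $2$ or in cell $3$ as the first cell of $b_2$ with a $\Gamma$-match starting at cell $1$; and the $(1-y)U_{\Gamma,n-1}(y)$ term follows as in the one-descent cases. (The recursion itself is numerically consistent: $U_{\Gamma,n}(y)=-y(1-y)^{n-1}$ for $n\le 4$, $U_{\Gamma,5}(y)=-y(1-y)^4+2y^3$, $U_{\Gamma,6}(y)=-y(1-y)^5+8y^3-8y^4$ all satisfy it, provided one sets $U_{\Gamma,m}(y)=0$ for $m\le 0$ --- note this differs from the convention $U_{\Gamma,0}(y)=1$ that Corollary \ref{cor:13241423} requires, and a complete proof must get the $n=4$ boundary case right.) However, the first genuine gap is that the modified involution is never established: you define ``frozen'' cells and then explicitly defer the verification that a split at an active cell cannot instead be undone by merging leftward and that a merge produces an active cell. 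That verification is the entire content of the two-descent case --- it is exactly the difficulty Section~5 isolates --- and without it the identity $U_{\Gamma,n}(y)=\sum_{I_{\Gamma}(O)=O}\sgn{O}W(O)$ is unproved.

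The second gap is that the final accounting cannot work as you describe it. In your scheme a fixed point has weight $(-y)^{\#\mathrm{bricks}}\,y^{\#\mathrm{frozen\ cells}}$, and you propose to obtain each correction term by deleting an initial block made of whole bricks and counting its fillings. A $4$-cell block contributing $-y^2(n-3)U_{\Gamma,n-4}(y)$ would need weight $-y^2$, i.e.\ an odd number of bricks with (bricks)$+$(frozen cells)$=2$, forcing a single brick of length $4$ containing a frozen descent; but that brick would be $b_1$, which you correctly observe must be increasing (with no preceding brick, a split at an internal $y$-cell of $b_1$ is always reversible). Moreover, in the match-at-cell-$1$ case no $4$-cell initial block of complete bricks can be detached at all: $b_1$ is cells $1$--$2$, and whether cells $3$--$4$ form one brick or two, the forced descent between cells $4$ and $5$ entangles the block with cell $5$ and beyond. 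A similar parity check rules out reading $-y^3(n-3)(n-5)(n-6)U_{\Gamma,n-6}(y)$ off a $6$-cell block, and chasing the forced matches shows the ``interleaved'' blocks actually continue to arbitrary length (as in Subcase 3.b of Theorem \ref{thm:1324123}), so the finite recursion must emerge from collapsing an unbounded family of block contributions or from auxiliary quantities like the $U^{(r)}_{\Gamma,n}(y)$ of Section 3.1. Your sentence ``collecting the block contributions with their signs and weights gives the terms'' conceals precisely the step where the argument, as written, breaks.
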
 
These results will appear in subsequent papers.

The authors would like to thank the anonymous referees whose comments helped to improve the presentation of this paper.


\end{document}